\documentclass{article}
\usepackage[utf8]{inputenc} 
\hyphenation{op-tical net-works semi-conduc-tor}
\usepackage{amsmath,amssymb,amsfonts,bm}
\usepackage{multirow}
\usepackage{mathrsfs}
\usepackage{booktabs}
\usepackage{authblk}
\usepackage[ruled,vlined]{algorithm2e}
\usepackage{ragged2e} 
\usepackage{booktabs}
\usepackage{amsthm}
\usepackage{latexsym}
\usepackage{graphicx}
\usepackage{rotating}
\usepackage{subcaption}
\usepackage{color,varioref}
\usepackage{longtable}
\allowdisplaybreaks[4]
\usepackage[colorlinks,linkcolor=blue]{hyperref}

\usepackage{tablefootnote}

\newtheorem{theorem}{Theorem}[section]

\newtheorem{definition}{Definition}[section]
\newtheorem{lemma}{Lemma}[section]
\newtheorem{remark}{Remark}[section]

\usepackage{geometry}
\geometry{a4paper,scale=0.75}

\usepackage{enumitem}

\begin{document}
	
\title{Accelerated Proximal Dogleg Majorization for Sparse Regularized Quadratic Optimization Problem
}

\author[1,2]{Feifei Zhao}
\author[1]{Qingsong Wang}
\author[2]{Mingcai Ding}

\author[1]{Zheng Peng\thanks{Corresponding author. \\
Email: {\tt  zhaofeifei@shzu.edu.cn(Feifei Zhao),nothing2wang@hotmail.com(Qingsong Wang),dingmc@shzu.edu.cn(Mingcai Ding),pzheng@xtu.edu.cn(Zheng Peng).}}}

\affil[1]{School of Mathematics and Computational Science, Xiangtan University, Xiangtan 411105, Hunan Province, China}	
\affil[2]{College of Sciences, Shihezi University, Xiangyang Street, Shihezi, 832003, Xinjiang, People's Republic of China}	

\date{}	
\maketitle

\begin{abstract}
This paper addresses the problems of minimizing the sum of a quadratic function and a proximal-friendly nonconvex nonsmooth function. While the existing Proximal Dogleg Opportunistic Majorization (PDOM) algorithm for these problems offers computational efficiency by minimizing opportunistic majorization subproblems along mixed Newton directions and requiring only a single Hessian inversion, 
its convergence rate is limited due to the nonconvex nonsmooth regularization term, and its theoretical analysis is restricted to local convergence. To overcome these limitations, we firstly propose a novel algorithm named PDOM with extrapolation (PDOME). Its core innovations  lie in two key aspects: (1) the integration of an extrapolation strategy into the construction of the hybrid Newton direction, and (2) the enhancement of the line search mechanism. Furthermore, we establish the global convergence of the entire sequence generated by PDOME to a critical point and derive its convergence rate under the Kurdyka-Lojasiewicz (KL) property. Numerical experiments demonstrate that PDOME achieves faster convergence and tends to converge to a better local optimum compared to the original PDOM. 
\end{abstract}
	
\begin{keywords}
Majorization-minimization, Nonconvex and nonsmooth optimization, Proximal Newton-like method, Extrapolation, Line search.
\end{keywords}
	
\maketitle
\section{Introduction}
This paper investigates a class of nonconvex composite optimization problems, specifically considering objective functions formed by the sum of a convex quadratic function and a nonconvex nonsmooth function:
\begin{equation}
\label{eqz2}
\min_{{x}\in\mathbb{R}^n}Q({x}):=s({x})+r({x}),
\end{equation}
where ${x}$ is the decision variable, $s$ is a quadratic function with $\nabla^2 s({x})\succcurlyeq 0$\tablefootnote{When $\nabla^2{s}$   is positive semi-definite, an $\epsilon\boldsymbol{I}$  can be added into  $\nabla^2{s}$ where 
$\epsilon>0$ is small.} and
$r: \mathbb{R}^n \to \mathbb{R} \cup \{\infty\}$ is a nonconvex and nonsmooth function (which may represent regularization, constraints, or complex structures). We further assume that $Q$ is lower bounded, i.e., there exists a real number $h$ such that $\forall {x} \in \mathbb{R}^n$, $Q({x})\geq h$. Additionally, $r$ is proximal-friendly, which means that  proximal operator $\operatorname{prox}_{\eta r}(\cdot) := \arg\min_{\boldsymbol{x} \in \mathbb{R}^n} \{r({x}) + \frac{1}{2\eta} \|{x} - \cdot\|^2\}$ ( with step size $\eta > 0$ ) is easy to compute \cite{combettes2011proximal}.

The structured optimization problem defined  in \eqref{eqz2} arises in diverse signal processing and machine learning applications. A quintessential example is sparse signal recovery \cite{ghayem2017sparse}, which underpins techniques like channel estimation \cite{stankovic2018analysis}, audio processing \cite{sharp2008application}, and blind source separation \cite{zibulevsky2001blind}. To enforce sparsity, numerous regularization strategies
are employed, encompassing both convex
 methods, like the $l_{1}$ norm \cite{candes2008introduction} and nonconvex counterparts including the $l_{1/2}$ quasi-norm \cite{xu2012l_}, the $l_{0}$ pseudo-norm \cite{blumensath2009iterative}, and the capped-$l_{1}$ penalty \cite{zhang2010analysis}. Beyond sparse modeling, similar composite optimization formulations also appear in other key areas of machine learning, such as low-rank matrix completion \cite{huang2021robust} and robust principal component analysis (RPCA) \cite{candes2011robust}. 

\begin{table}[h]
\centering
\label{tab:algorithms}
\caption{Summarize whether the existing methods incorporate second-order information (SOI), adopt opportunistic majorization (OM), utilize extrapolation techniques, compute the inverse of the Hessian matrix once, and make assumption on $r$, as well as the convergence of subsequences\tablefootnote{``-" means~not~given.}. }
\renewcommand{\arraystretch}{1.1}

\resizebox{\textwidth}{!}{
\begin{tabular}{ccccccc}
\toprule
\textbf{Algorithm} & \textbf{Convexity of $r$} & \textbf{SOI} & \textbf{Extrapolation} & \textbf{OM Strategy} & \textbf{1 Hessian Inversion} & \textbf{Convergence} \\
\midrule
PG \cite{combettes2011proximal}                 & Yes & No & No & No & No & Global \\
PN \cite{lee2014proximal}                  & Yes & Yes & Yes & No & No & Local  \\
mAPG \cite{li2015accelerated}                & Yes & No & Yes & No & No & Global  \\
APGnc \cite{li2017convergence}                & Yes & No & Yes & No & No & Local \\
PANOC \cite{stella2017simple}            & No & Yes  & No & No & No & Global \\
PANOC+ \cite{de2022proximal}           & No &Yes  & Yes & No & No & Global \\
PDOM \cite{zhou2024proximal}             & No & Yes & No & Yes & Yes & Local \\
\textbf{sPDOME (ours)}             & No & Yes & Yes & Yes & Yes & - \\
\textbf{PDOME (ours)}            & No & Yes & Yes & Yes & Yes & Global \\
\bottomrule
\end{tabular}
}
\end{table}

Various algorithms exist for solving \eqref{eqz2}, among which the proximal gradient (PG) method \cite{combettes2011proximal} is the most widely adopted due to its simplicity and ease of implementation. PG method iteratively combines gradient descent on the smooth component with a proximal update for the nonsmooth term. However, in nonconvex settings, PG method suffers from slow convergence, exhibiting only sublinear rates of  $O(1/k)$ in the worst case (where $k$ is the iteration index) \cite{attouch2009convergence, attouch2013convergence}.
To accelerate convergence, techniques incorporating momentum (e.g., the accelerated proximal gradient method for nonconvex programming, APGnc \cite{li2017convergence}) or Nesterov extrapolation (e.g., the accelerated proximal gradient (APG) method \cite{li2015accelerated, beck2009fast}) have been developed. These methods employ adaptive mechanisms to dynamically select between standard PG updates and accelerated variants based on objective function values.
Although these accelerated variants offer improvements, their guarantees of faster convergence often require specific problem structures or assumptions \cite{li2015accelerated, attouch2009convergence, attouch2013convergence}. Consequently, the desired acceleration may fail to materialize in more general, unconstrained, or challenging nonconvex optimization scenarios.

Newton-type algorithms, exemplified by the proximal Newton method (PN) \cite{lee2014proximal}, have garnered significant recent attention for minimizing objectives comprising a twice-differentiable term and a proximal-friendly term. At its core, each iteration involves constructing a scaled proximal operator (SPO) derived from the Hessian of the differentiable term and solving the resulting subproblem. Crucially, when the objective is convex and the SPO can be efficiently solved, this approach achieves a superlinear asymptotic convergence rate. However, the proximal Newton method faces two fundamental limitations:(1) solving the SPO presents a significant computational challenge, and the development of efficient solvers has yet to be achieved; (2) its established fast convergence is confined to convex problems, providing no assurance for the nonconvex case.

Instead of directly addressing the computationally challenging SPO, quasi-Newton approaches \cite{stella2017simple, de2022proximal, patrinos2013proximal, stella2017forward, themelis2018forward} strategically minimize the forward-backward envelope (FBE), which shares the same local minimizers as the original objective function. These methods iteratively compute the FBE's gradient and update a quasi-Newton direction (using BFGS or L-BFGS \cite{liu1989limited}) based on this gradient, subsequently performing a line search along the minimization, it introduces two significant drawbacks: (1) the iterative updating of the Hessian approximation via BFGS/L-BFGS imposes a notable computational burden and memory overhead; (2) crucially, these approximations fail to exploit potential special structure present in the exact Hessian, potentially discarding valuable problem-specific information that could accelerate convergence.

Several algorithms leverage the core Newton-type principle of using approximate second-order information for fast convergence. Sepcially, Proximal Averaged Newton-type method for Optimal Control (PANOC) \cite{stella2017simple} innovatively integrates the forward-backward (FB) method and the FBE method to overcome their individual drawbacks, demonstrating effectiveness in nonlinear constrained optimal control and achieving superlinear convergence under mild assumptions. Building on PANOC, PANOC+ \cite{de2022proximal} addresses specific shortcomings (including those of its derivative, the PG algorithm) by introducing an adaptive step size rule tailored to the PG oracle. Validated through case studies, PANOC+ offers a complete convergence theory (handling local Lipschitz continuity) and robustness against suboptimal PG subproblem solutions. However, both PANOC and PANOC+, relying on an FBE penalty method within the Augmented Lagrangian Method (ALM) framework, can struggle with nonconvex nonsmooth constraints, often requiring approximations that degrade convergence speed. Complementing these, the PDOM algorithm \cite{zhou2024proximal} employs a majorization-minimization (MM) approach for problem \eqref{eqz2}, constructing a dogleg surrogate model that strategically combines gradient and Newton directions with proximal mapping. A key advantage of PDOM is that the quadratic nature of the smooth term keeps the Hessian constant, allowing its inverse to be precomputed and stored, thus eliminating the need for costly iterative matrix inversions typical of quasi-Newton methods. While PDOM provides theoretical guarantees for convergence to critical points and analyzes its local convergence rate, a significant limitation is that its convergence analysis, particularly the rate, is confined to the local domain,lacking established global convergence guarantees.

To address the limitations of the PDOM algorithm in \cite{zhou2024proximal}, we propose a simple PDOM algorithm called sPDOME, which incorporates an extrapolation parameter mechanism to significantly improve numerical performance. To further ensure global convergence, we develop the PDOME algorithm, which innovatively combines extrapolation techniques with an improved backtracking line search, thereby establishing a rigorous theoretical framework for convergence analysis.
The main contributions include:

\begin{itemize}
\item The core idea of  PDOME algorithm is based on the majorization-minimization (MM) framework and incorporates extrapolation acceleration techniques. This algorithm constructs a surrogate function along the dogleg path at the extrapolated point, integrating the gradient direction and Newton-type search direction: the gradient direction ensures the reliability of the sequence of iterates, while the Newton direction accelerates the local convergence rate. Further, optimizing the line search criterion helps determine a more optimal step size, thereby supporting subsequent convergence analysis. 

\item Through  theoretical analysis, we show that the limit points of the sequences generated by  PDOME  are critical points of the objective function. Then, by exploiting different cases of the Kurdyka-Lojasiewicz (KL) property of the objective function, we establish comprehensive convergence rate guarantees for PDOME, systematically characterizing its behavior across three distinct convergence regimes determined by the Lojasiewicz exponent. The theoretical analysis demonstrates that PDOME maintains computational efficiency comparable to conventional methods while requiring fewer proximal operator computations per iteration.

\item We conducted numerical experiments on several well-known nonconvex and nonsmooth problems. The results show that, compared with other benchmark algorithms, both the sPDOME algorithm and the PDOME algorithm can converge quickly.
\end{itemize}

The rest of this paper is structured as follows: Section \ref{sec:preliminaries} presents mathematical preliminaries and related preparatory work; Section \ref{sec:3} introduces the proposed sPDOME and PDOME algorithms, including a detailed analysis of PDOME algorithm convergence properties; Section \ref{sec:numerical_experiment} reports relevant experimental results; Section \ref{sec:5}  summarizes the paper and outlines future research directions.

\section{Preliminaries}\label{sec:preliminaries}
In this paper,  $\mathbb{R}^{n}$ 
is defined as the $n$ dimensional Euclidean space. The symbols $\cdot,\langle\cdot,\cdot\rangle$ and $T$  represent the standard product, inner product and transpose in the space $\mathbb{R}^{n}$. For an arbitrary vector ${x} \in \mathbb{R}^{n}$, the $\ell_{2}$ norm, the $\ell_{1}$ norm, and the $\ell_{0}$  pseudo-norm are defined as $\|{x}\| := \sqrt{{x}^{T}{x}}$,
$\|x\|_1 := \sum_{i=1}^{n} |x_i|$, and $\|x\|_0 := |\operatorname{supp}(x)|$ where $\operatorname{supp}(\cdot)$ counts the number of nonzero elements in $x$. Given a positive semidefinite matrix $\bm{\mathrm{M}} \in \mathbb{R}^{n \times n}$, the scaled norm of $x$ is defined as $\|x\|_{\bm{\mathrm{M}}} := \sqrt{x^T \bm{\mathrm{M}} x}$. Given a closed set $\Omega \subseteq \mathbb{R}^n$, $\operatorname{dist}(x, \Omega) := \inf \{\|y - x\|_2 : y \in \Omega\}$ calculates the distance between $x$ and $\Omega$.

\begin{definition} (Lower semicontinuous \cite{bertsekas2003convex}).
A function $Q:\mathbb{R}^n\to(-\infty,+\infty]$ is said to be proper if $\operatorname{dom} Q\neq\emptyset$, where $\operatorname{dom} Q=\{x\in\mathbb{R}^n:Q({x})<+\infty\}$, and lower semicontinuous at point $x_0$ if
\begin{align}
\lim_{{x}\to{x}_0}\inf Q({x})\geq Q\left({x}_0\right).\label{eq:2}
\end{align}
\end{definition}

\begin{definition} (Gradient Lipschitz continuity \cite{bertsekas2003convex}). Let $L_f\geq 0$. A         differentiable function $f:\mathbb{R}^n\to\mathbb{R}$ is said to have a Lipschitz continuity of the gradient if for all $x,{y}\in\operatorname{dom}f$ it holds that
\begin{align}
\|\nabla f({x})-\nabla f({y})\|\leq L_f\|{x}-{y}\|.\label{eq:3}
\end{align}
The corresponding Lipschitz constant is denoted as $L_f$.
\end{definition}
The value of $L_f$ for a twice differentiable function $f:$ $\mathbb{R}^n\to\mathbb{R}$ with a positive semi-definite Hessian matrix $\bm{\mathrm{M}}\in\mathbb{R}^{n\times n}$ is the largest eigenvalue of $\bm{\mathrm{M}}$, denoted by $\lambda_\mathrm{max}(\bm{\mathrm{M}}).$

\begin{definition} (Subdifferential     \cite{tyrrell1998variational}). Let $f:\mathbb{R}^{n}\to\mathbb{R}\cup\{+\infty\}$ be a proper and lower semicontinuous function. For a given $x\in\operatorname{dom}f$, the $Fr\acute{e}chet$ subdifferential of $f$ at $x$, written as
$\hat{\partial}f({x})$, is the set of all vectors ${v}\in\mathbb{R}^n$ which satisfy
$$\liminf_{y \to x, y \neq x} \frac{f(y) - f(x) - \langle v, y - x \rangle}{\|y - x\|} \geq 0.$$
For $x\notin\operatorname{dom}f$, we set
$\hat{\partial}f({x}):=\emptyset$.
The subdifferential (which is also called the limiting subdifferential) of $f$ at $x\in\mathbb{R}^{n}$, written as $\partial f(x)$, is defined by
\begin{align}
\partial f({x}):=\{{v}\in\mathbb{R}^n:\exists{x}^k\to{x}, f\left({x}^k\right)\to f({x}),{v}^k\in\hat{\partial}f\left({x}^k\right)\to{v}, k\to\infty\}.\label{eq:4}
\end{align}
As before $\partial f(x):=\emptyset$ for
$x\notin\mathrm{dom}f$, and its domain is 
$\mathrm{dom}~\partial f:=\{x\in \mathbb{R}^n:\partial f(x)\neq\emptyset\}$.
\end{definition}

\begin{definition}(KL property \cite{attouch2010proximal}). A proper closed function $r:\mathbb{R}^n\to\mathbb{R}\cup\{+\infty\}$ is said to have the KL property at $\hat{x}\in\operatorname{dom} \partial r$ if there exists $\eta\in(0,+\infty]$, a neighborhood $\mathcal{B}_\rho(\hat{x})\triangleq\{x:\|x-$
$\hat{x}\|<\rho\}$, and a continuous desingularizing concave function
$\psi:[0,\eta)\to[0,+\infty)$ with $\psi(0)=0$ such that\\
(i) $\psi$ is a continuously differentiable function with $\psi^{\prime}(x)>$
0, $\forall x\in ( 0, \eta )$,\\
(ii) for all $x\in\mathcal{B}_\rho(\hat{{x}})\cap\{{u}\in\mathbb{R}^n:r(\hat{{x}})<r({x})<r(\hat{{x}})+\eta\},$
it holds that
\begin{align}
\psi'(r({x})-r(\hat{{x}}))\operatorname{dist}(0,\partial r({x}))\geq1.\label{eq:5}
\end{align}
A proper closed function $r$ satisfying the KL property at all points in  $\operatorname{dom}\partial r$ is called a KL function.
\end{definition}

\begin{definition}(Eojasiewicz exponent \cite{yu2022kurdyka}). For a proper closed function $r$ satisfying the KL property at $\hat{x}\in\operatorname{dom}\hat{\partial}r$, if the desingularizing function $\psi$ can be chosen as $\psi(t)=$ $\frac{C}{1-2\theta}t^{2\theta-1}$ for some $C>0$ and $\theta\in[0,1)$, i.e., there exist $\rho>0$ and $\eta\in(0,+\infty]$ such that
\begin{align}
\operatorname{dist}(0,\partial r({x}))\geq C(r({x})-r(\hat{{x}}))^\theta, \label{eq:6}
\end{align}
where $x\in\mathcal{B}_\rho(\hat{x})$ and $r(\hat{x})<r({x})<r(\hat{{x}})+\eta$, then we say that $r$ has the KL property at $\hat{x}$ with an exponent of $\theta.$ We say that $r$ is a KL function with an exponent of $\theta$ if $r$ has the same exponent $\theta$ at any $\hat{x}\in\operatorname{dom}\partial r,$
where the desingularizing function $\psi$ can be chosen specifically as $\psi(t)=$ $\frac{C}{\theta}t^{\theta}$ with constants $C>0$ and $\theta\in(0,1]$.
\end{definition}
The KL property holds for a large family of functions used in optimization. For instance, all proper and closed semi-algebraic or subanalytic functions satisfy the KL property  with an associated Lojasiewicz exponent $\theta\in[0,1)$ \cite{attouch2010proximal}. The global convergence rate of PDOME is determined by the value of the Lojasiewicz exponent. In Subsection \ref{sec:convergence}, we provide the exponent value of the problem under test.

\begin{lemma} \label{lem:2.6} (Uniformized KL Property \cite{bolte2014proximal}). Let $\mathbb{W}$ be a compact set and $r : \mathbb{R}^n \rightarrow \mathbb{R} \cup \{ \infty \}$ be a proper and lower semicontinuous function. We assume that $r$ is constant on $\mathbb{W}$ and satisfies the KL property at each point of $\mathbb{W}$. Then, there exist $\epsilon > 0$, $\eta > 0$ and $\phi \in Y_\eta$ such that for all $\bar{z} \in \mathbb{W}$, one has
$$\phi'(h(z) - h(\bar{z}))\text{dist}(0, \partial h(z)) \geq 1,$$
for all $z \in \{ z \in \mathbb{R}^n \mid \text{dist}(z, \mathbb{W}) < \epsilon \} \cap \{ z \in \mathbb{R}^n \mid h(\bar{z}) < h(z) < h(\bar{z}) + \rho \}$.
\end{lemma}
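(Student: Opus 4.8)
The plan is to leverage the compactness of $\mathbb{W}$ to reduce the pointwise KL property to finitely many instances, and then to glue the finitely many desingularizing functions into a single one. Throughout, since $r$ is constant on $\mathbb{W}$, write $\zeta$ for its common value there (the hypothesis that $r$ satisfies the KL property at every point of $\mathbb{W}$ also forces $\mathbb{W}\subseteq\operatorname{dom}\partial r$); note that for $\bar z\in\mathbb{W}$ the condition $r(\bar z)<r(z)<r(\bar z)+\eta$ in the conclusion is simply $\zeta<r(z)<\zeta+\eta$, independent of $\bar z$, which is exactly what makes a uniform statement possible. (Here $h$ in the statement is the same function as $r$, and the $\rho$ appearing in the final display plays the role of $\eta$.)

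First I would invoke, for each $u\in\mathbb{W}$, the KL property of $r$ at $u$: there are $\rho_u>0$, $\eta_u\in(0,+\infty]$, and $\phi_u$ in the class $Y_{\eta_u}$ of concave, continuously differentiable maps $\psi:[0,\eta_u)\to[0,+\infty)$ with $\psi(0)=0$ and $\psi'>0$ on $(0,\eta_u)$, such that $\phi_u'(r(z)-\zeta)\operatorname{dist}(0,\partial r(z))\ge 1$ for all $z\in\mathcal{B}_{\rho_u}(u)$ with $\zeta<r(z)<\zeta+\eta_u$. The balls $\{\mathcal{B}_{\rho_u/2}(u):u\in\mathbb{W}\}$ form an open cover of the compact set $\mathbb{W}$, so a finite subcover $\mathcal{B}_{\rho_1/2}(u_1),\dots,\mathcal{B}_{\rho_p/2}(u_p)$ exists, with associated $\eta_i:=\eta_{u_i}$ and $\phi_i:=\phi_{u_i}$.

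Next I would set $\epsilon:=\tfrac12\min_{1\le i\le p}\rho_i$ and $\eta:=\min_{1\le i\le p}\eta_i$, both positive, and verify the key containment: if $\operatorname{dist}(z,\mathbb{W})<\epsilon$, choose $w\in\mathbb{W}$ with $\|z-w\|<\epsilon$ and an index $i$ with $w\in\mathcal{B}_{\rho_i/2}(u_i)$; then $\|z-u_i\|\le\|z-w\|+\|w-u_i\|<\epsilon+\rho_i/2\le\rho_i$, so $z\in\mathcal{B}_{\rho_i}(u_i)$, and since $\eta\le\eta_i$ the pointwise inequality at $u_i$ applies to any such $z$ with $\zeta<r(z)<\zeta+\eta$. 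Finally, define $\phi:=\sum_{i=1}^p\phi_i$ on $[0,\eta)$; as a finite sum it is concave, continuously differentiable on $(0,\eta)$ with $\phi'=\sum_i\phi_i'>0$, and $\phi(0)=0$, so $\phi\in Y_\eta$; moreover $\phi'(t)\ge\phi_i'(t)$ for every $i$ because each $\phi_j'\ge 0$. Combining, for every $\bar z\in\mathbb{W}$ and every $z$ with $\operatorname{dist}(z,\mathbb{W})<\epsilon$ and $r(\bar z)=\zeta<r(z)<\zeta+\eta$ we get
\[
\phi'\bigl(r(z)-r(\bar z)\bigr)\operatorname{dist}(0,\partial r(z))\ \ge\ \phi_i'\bigl(r(z)-\zeta\bigr)\operatorname{dist}(0,\partial r(z))\ \ge\ 1,
\]
which is the assertion.

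The step I expect to be the main obstacle — or at least the one demanding the most care — is the amalgamation of the $\phi_i$: one needs a single element of $Y_\eta$ whose derivative dominates each $\phi_i'$ on the relevant region, and it is important that the natural-looking choice $\max_i\phi_i$ fails (it need not be differentiable at crossing points), whereas the sum $\sum_i\phi_i$ both stays in the class and satisfies $\phi'\ge\phi_i'$. The other point easy to get wrong is the factor $\tfrac12$ in the shrunken radii $\rho_i/2$, which is precisely what guarantees that a point within $\epsilon$ of $\mathbb{W}$ actually lands inside one of the original balls $\mathcal{B}_{\rho_i}(u_i)$ on which the pointwise KL inequality is valid.
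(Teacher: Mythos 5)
Your proof is correct and is essentially the canonical argument: the paper itself gives no proof of this lemma, simply importing it from the cited reference \cite{bolte2014proximal}, and your reconstruction (finite subcover by half-radius balls, $\epsilon=\tfrac12\min_i\rho_i$, $\eta=\min_i\eta_i$, and the summed desingularizer $\phi=\sum_i\phi_i$ so that $\phi'\ge\phi_i'$) coincides with the proof of Lemma 6 in that reference. The two points you flag as delicate --- using the sum rather than the non-differentiable pointwise maximum, and the halved radii ensuring a point $\epsilon$-close to $\mathbb{W}$ lands in a full ball $\mathcal{B}_{\rho_i}(u_i)$ --- are indeed the only places where care is needed, and you handle both correctly.
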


\subsection{PG Method from the Majorization-minimization Angle}\label{sec:pg_mm}
In this subsection, we examine the proximal gradient method from the perspective of a majorization-minimization algorithm and identify certain limitations that lead to its slow convergence rate. The PG method is a well-established method for solving composite optimization problems of the form \eqref{eqz2}. At each iteration, the PG method performs a proximal line search in the direction of the negative gradient, employing a positive step size $\eta$. At a given point $y^k$, with $\zeta$ as the momentum coefficient and $c$ being a constant, PG   method solves the following surrogate function
\begin{align}
&\min_x~\underbrace{s\left({y}^k\right)+\left\langle\nabla s\left({y}^k\right),{x}-{y}^k\right\rangle+\frac{1}{2\eta}\left\|{x}-{y}^k\right\|^2}_{m_{pg}({x};{y}^k)}+r({x})\nonumber \\
&=\frac{1}{2\eta}\left\|{x}-\left({y}^k-\eta\nabla s\left({y}^k\right)\right)\right\|^2+r({x})+c,\label{eq:7}
\end{align}
where 
\begin{align}
&y^{k}=x^{k}+\zeta\left(x^{k}-x^{k-1}\right).
\label{eq:8}
\end{align}
Assuming that $r(\cdot)$ is proximable, the solution of \eqref{eq:7} can be obtained efficiently from a computational standpoint.
The iterates generated by the algorithm yield a nonincreasing sequence of objective  function's values. This property is ensured by the fact that the surrogate function $m_{pg}({x};{y}^{k})$ serves as an upper bound for $s({x})$. Specifically, $m_{pg}({x};{y}^{k})\geq s({x})$ holds for all ${x}\in\operatorname{dom}Q$, provided that $\eta<1/L_{s}$.
\begin{align}
Q({x}^{k+1})&=s({x}^{k+1})+r({x}^{k+1})\nonumber\\
&{\leq} m_{pg}({x}^{k+1};{y}^{k})+r({x}^{k+1})\nonumber\\
&{\leq}m_{pg}({x}^k;{y}^{k})+r({x}^k),\label{eq:9}
\end{align}
where the first inequality follows from the majorization step, and the second arises from the proximal operator's property.  A proximal gradient method with extrapolation and line search (PGels) is proposed in \cite{yang2024proximal} to address composite optimization problems that are potentially nonconvex, nonsmooth, and non-Lipschitz. By constructing an auxiliary function, the global subsequential convergence of PGels is proved. With appropriate parameter selection, PGels can be simplified to PG and PGe. The convergence guarantee of $Q\left(x^{k}\right)$ can be treated in the same way as in \cite{yang2024proximal}.

However, using the negative gradient direction often results in slow convergence, especially for nonconvex functions \cite{zhang2010analysis, attouch2013convergence, chen2015fast}. Furthermore, in cases where the Hessian matrix exhibits a large condition number, gradient-based methods become inefficient because of excessively slow convergence, as noted in \cite[Chapter 9]{boyd2004convex}.

\subsection{Hybrid Direction and Opportunistic Majorization}
To address the limitations identified in Subsection \ref{sec:pg_mm}, we propose the sPDOME and PDOME algorithm. These approaches integrates a dogleg search strategy, drawing inspiration from trust region methods. At each iteration, the algorithm constructs and minimizes an opportunistically majorized surrogate function along the dogleg path, replacing conventional gradient-based updates.

Given $\mu\in(0,2]$, the dogleg path is denoted as
\begin{align}\label{eq:10}  
d(\mu) :=  
\arraycolsep=1.0pt\def\arraystretch{1.5}  
\left\{
\begin{array}{ll}  
  d_{\eta} & \mu \in (0,1], \\  
  d_{\eta} + (\mu - 1)(d_N - d_\eta)\quad  & \mu \in (1,2],  
\end{array}  
\right.
\end{align} 
where $g:=\nabla s(y)$, $d_\eta:=-\eta\nabla s(y)$, and $d_N:=-(\nabla^2s(y))^{-1}\nabla s(y)$, $\eta$ is the fixed step size of the gradient direction, with $\eta\in(0,1/L_s)$, and $d_N$ denotes Newton point. The gradient direction is essential for ensuring convergence to a critical point, because at $x^\mathrm{cri}$, the first-order optimality condition of \eqref{eqz2} implies $0\in\partial Q(x^\text{cri})=\nabla s(x^\text{cri})+\partial r(x^\text{cri})$, for which the gradient direction is necessary. The construction of our path diverges fundamentally from the approach in \cite[Chapter~4]{nocedal2006numerical}, specifically by excluding the scaling factor $\mu$ in the initial segment. The adoption of this modification is warranted as $d_{\eta}$ intrinsically functions as a descent direction, all while maintaining freedom from trust-region restrictions. However, the path continues to ensure descent with respect to the quadratic term.

Given a positive definite matrix $\bm{\mathrm{M}}\succ0$ with bounded eigenvalues, it holds that
\begin{align}
\lambda_{\min}\|g\|^2 \leq \|g\|_{\bm{\mathrm{M}}}^2 = g^{T }\bm{\mathrm{M}} g \leq \lambda_{\max}\|g\|^2, \label{eq:11}
\end{align}
where $\lambda_\mathrm{max}$ and $\lambda_\mathrm{min}$ denote the largest and the smallest eigenvalue of $\bm{\mathrm{M}}$, respectively. For the sake of subsequent analysis, we perform a coordinate transformation that shifts the origin to the point $(y^k, s(y^k))$. Under this new coordinate system, the smooth part of the objective function and its corresponding surrogate are rewritten as
\begin{align}
s({x}):=\langle{g},{x}\rangle+\frac{1}{2}\|{x}\|_{{\bm{\mathrm{M}}}}^2,\quad m_\mu({x};y^{k}):=\langle{g}_\mu,{x}\rangle+\frac{1}{2\eta_\mu}\|{x}\|^2.\label{eq:12}
\end{align}

\section{Main Results }\label{sec:3}
In this section, we introduce the PDOME algorithm for solving problem \eqref{eqz2} which may involve nonconvexity and nonsmoothness, aiming to address the limitations outlined in Subsection \ref{sec:pg_mm}. Building on the existing technical framework, we further propose a novel extrapolation technique to enhance the algorithm's performance. Specifically, on the basis of extrapolation, we integrate the dogleg search strategy originally developed in the trust region methodology into the PDOME algorithm, replacing the conventional gradient-based descent direction with a hybrid search direction. This integration enables the algorithm to utilize more sophisticated directional information, thereby potentially accelerating convergence and improving robustness.

\begin{lemma} \label{lem:3.1}The inequality in the following equation,
\begin{align}
\langle d(\mu), \nabla s(d(\mu)) \rangle \leq 0,\label{eq:13}
\end{align}
is satisfied when $\eta = \frac{1}{\lambda_{\max}}$, where $\lambda_\mathrm{max}$  denote the largest eigenvalue of $\bm{\mathrm{M}}$. For any other $\eta \in (0, \frac{1}{\lambda_{\max}})$, the strict inequality holds.
\end{lemma}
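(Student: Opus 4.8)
\emph{Proof outline.} The plan is to evaluate $\langle d(\mu),\nabla s(d(\mu))\rangle$ in closed form on the two pieces of the dogleg path \eqref{eq:10}, working in the transformed model \eqref{eq:12} where $\nabla s(x)=g+\bm{\mathrm{M}}x$ with $\bm{\mathrm{M}}=\nabla^2 s\succ 0$ and eigenvalues in $[\lambda_{\min},\lambda_{\max}]$, $\lambda_{\min}>0$. Abbreviate $a:=\|g\|^2$, $b:=g^{T}\bm{\mathrm{M}}g$ and $c:=g^{T}\bm{\mathrm{M}}^{-1}g$; by \eqref{eq:11} these satisfy $b\le\lambda_{\max}a$ and $c\ge a/\lambda_{\max}$. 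If $g=0$ then $d(\mu)\equiv 0$ and the inequality is a trivial equality, so for the strict statement I may assume $g\neq 0$, i.e. $a>0$.

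For $\mu\in(0,1]$ the path is the constant vector $d(\mu)=d_\eta=-\eta g$, so $\nabla s(d(\mu))=(I-\eta\bm{\mathrm{M}})g$ and $\langle d(\mu),\nabla s(d(\mu))\rangle=-\eta(a-\eta b)\le-\eta(1-\eta\lambda_{\max})a$, which equals $0$ when $\eta=1/\lambda_{\max}$ and is strictly negative when $\eta\in(0,1/\lambda_{\max})$ and $g\neq 0$. For $\mu\in(1,2]$, put $\beta:=\mu-1\in(0,1]$ so that $d(\mu)=-(1-\beta)\eta g-\beta\bm{\mathrm{M}}^{-1}g$; substituting into $\nabla s(x)=g+\bm{\mathrm{M}}x$ the Newton part telescopes and one obtains the clean identity $\nabla s(d(\mu))=(1-\beta)(I-\eta\bm{\mathrm{M}})g=(2-\mu)(I-\eta\bm{\mathrm{M}})g$, hence
\[ \langle d(\mu),\nabla s(d(\mu))\rangle=(1-\beta)\bigl[\,\eta a(2\beta-1)+(1-\beta)\eta^{2}b-\beta c\,\bigr]. \]

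Because $1-\beta=2-\mu\ge 0$, it suffices to sign the bracket. Replacing $b$ by the upper bound $\lambda_{\max}a$ (its coefficient $(1-\beta)\eta^{2}$ is $\ge 0$) and $c$ by the lower bound $a/\lambda_{\max}$ (its coefficient $-\beta$ is $\le 0$) shows the bracket is at most $a\,f(\eta)$, where $f(\eta):=(1-\beta)\lambda_{\max}\eta^{2}+(2\beta-1)\eta-\beta/\lambda_{\max}$. This $f$ is a convex quadratic in $\eta$ (leading coefficient $(1-\beta)\lambda_{\max}\ge 0$) with $f(0)=-\beta/\lambda_{\max}\le 0$ and, by direct substitution, $f(1/\lambda_{\max})=0$; convexity then forces $f(\eta)\le 0$ for all $\eta\in[0,1/\lambda_{\max}]$, and $f(\eta)<0$ on the open interval when $\beta>0$ (equivalently, $1/\lambda_{\max}$ is the unique positive root of $f$ and $f$ is negative between its two roots). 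Therefore the bracket is $\le 0$, strictly $<0$ once $\eta<1/\lambda_{\max}$, $g\neq 0$ and $\beta<1$.

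Combining the two segments gives $\langle d(\mu),\nabla s(d(\mu))\rangle\le 0$ for every $\mu\in(0,2]$ at $\eta=1/\lambda_{\max}$, and the strict inequality for $\eta\in(0,1/\lambda_{\max})$. One caveat surfaces naturally from the computation: at the Newton endpoint $\mu=2$ the point $d_N=-\bm{\mathrm{M}}^{-1}g$ is the global minimizer of the quadratic $s$, so $\nabla s(d(2))=0$ and the inner product is identically $0$ for every $\eta$ — the strict claim is thus to be read on $\mu\in(0,2)$ (or away from stationary points). I expect essentially all the difficulty to lie on the second segment: establishing the telescoping identity $\nabla s(d(\mu))=(2-\mu)(I-\eta\bm{\mathrm{M}})g$, and then signing the resulting bracket uniformly in $\beta\in(0,1]$. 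The convexity argument for $f(\eta)$ (with the endpoint values $f(0)\le 0$ and $f(1/\lambda_{\max})=0$) is the cleanest route; a direct term-by-term estimate is clumsier because the coefficient $\eta a(2\beta-1)$ changes sign at $\beta=\tfrac12$.
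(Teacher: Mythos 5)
Your proof is correct, and on the second segment of the dogleg path it takes a genuinely different --- and in fact more complete --- route than the paper's. On $\mu\in(0,1]$ the two arguments coincide. On $\mu\in(1,2]$, however, the paper's displayed chain evaluates $\langle d(\mu),\nabla s(y^k)\rangle=\langle d(\mu),g\rangle=-\eta\|g\|^2+(\mu-1)\bigl(\eta\|g\|^2-\|g\|^2_{\bm{\mathrm{M}}^{-1}}\bigr)$, i.e.\ it silently drops the quadratic term: the quantity claimed in \eqref{eq:13} is $\langle d(\mu),g\rangle+\|d(\mu)\|^2_{\bm{\mathrm{M}}}$, and since the omitted term is nonnegative, showing $\langle d(\mu),g\rangle\le 0$ does not by itself yield \eqref{eq:13}. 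Your computation keeps the full quantity via the identity $\nabla s(d(\mu))=(2-\mu)(I-\eta\bm{\mathrm{M}})g$ and then signs the resulting bracket by convexity of $f(\eta)$ in $\eta$ on $[0,1/\lambda_{\max}]$ using only the eigenvalue bounds \eqref{eq:11}; this is precisely the form of the lemma invoked later (in Lemma \ref{lem:3.6}, $\bar s'(1)=\langle d(\mu),\nabla s(d(\mu))\rangle$), so your argument closes a real gap rather than merely rederiving the paper's steps. Your caveat at $\mu=2$ is also correct and worth recording: there $d(2)=d_N$ is the unconstrained minimizer of $s$, so $\nabla s(d(2))=0$ and the inner product vanishes for every $\eta$; the strict-inequality clause should therefore be read for $\mu\in(0,2)$ and $g\neq 0$ (the paper's strict ``$<$'' in \eqref{eq:14}, which survives at $\mu=2$, pertains to $\langle d(\mu),g\rangle$ --- the quantity actually used in Lemma \ref{lem:3.4} --- not to \eqref{eq:13}).
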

\begin{proof} We begin with the trivial case where $\mu \in (0,1]$,
\begin{align}
\langle d(\mu),\nabla s(d(\mu))\rangle 
&=-\eta g(-\eta \bm{\mathrm{M}}g+g)\nonumber\\
&=\eta^2\left(\|g\|_{\bm{\mathrm{M}}}^2-\frac{1}{\eta}\|g\|^2\right)\nonumber\\
&\leq\eta^2\left(\lambda_{\max}\|g\|^2-\frac{1}{\eta}\|g\|^2\right)\nonumber\\
&\leq0.\nonumber
\end{align}
Then for the remaining range of $\mu \in (1,2]$, we have
\begin{align}
\langle d(\mu),\nabla s(d(\mu))\rangle 
&=-\eta\|g\|^2+(\mu-1)\left(\eta\|g\|^2-\|g\|^2_{{\bm{\mathrm{M}}}^{-1}}\right) \nonumber\\
&< (\mu-1)\left(\eta\|g\|^2-\|g\|^2_{{\bm{\mathrm{M}}}^{-1}}\right) \nonumber\\
&\leq 0.
\label{eq:14}
\end{align}
The equality of \eqref{eq:14} only holds when $\eta$ is exactly $1/\lambda_{\max}$.
\end{proof}
The effectiveness of an MM algorithm fundamentally depends on the construction of a surrogate function that acts as a tight upper bound for the original objective function. In PDOME, the local surrogate function $m_{\mu}$ is the projection of $m_{pg}$ onto the path direction, that is
\begin{align}
m_{\mu}({x};{y}^{k}):&=s({y}^{k})+\left\langle{g}_{\mu},{x}-{y}^{k}\right\rangle+\frac{1}{2\eta_{\mu}}\left\|{x}-{y}^{k}\right\|^{2}\nonumber\\
&=s({y}^k)+\frac{1}{2\eta_\mu}\left\|{x}-\left({y}^k+{d}(\mu)\right)\right\|^2-\frac{\eta_\mu}{2}\left\|{g}_\mu\right\|^2,
\label{eq:15}
\end{align}
where $g_\mu=\frac{\langle\nabla s\left(y^k\right),d(\mu)\rangle}{\|d(\mu)\|^2}d(\mu)$ and $\eta_\mu=-\frac{\|d(\mu)\|^2}{\langle\nabla s(y^k),d(\mu)\rangle}$, for $\mu\in(0,2]$.
The step size $\eta_\mu$ is allowed to surpass $\eta$.

\begin{lemma}\label{lem:3.2} $\eta_\mu$ is an increasing function of $\mu\in[0,2]$.
\end{lemma}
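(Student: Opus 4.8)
\textbf{Proof proposal for Lemma \ref{lem:3.2}.}

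The plan is to express $\eta_\mu$ in closed form in terms of $\mu$ on the two pieces of the dogleg path and then verify monotonicity on each piece, taking care of continuity at the breakpoint $\mu = 1$. Recall $\eta_\mu = -\|d(\mu)\|^2 / \langle g, d(\mu)\rangle$, where $g = \nabla s(y^k) = \bm{\mathrm{M}} y^k + \text{(const)}$; in the shifted coordinates of \eqref{eq:12} we simply have $\nabla s(0) = g$, so $\langle g, d(\mu)\rangle$ and $\|d(\mu)\|^2$ are explicit quadratics/linear-fractional expressions in $\mu$. On the first segment $\mu \in (0,1]$ we have $d(\mu) = d_\eta = -\eta g$, which is independent of $\mu$; hence $\langle g, d_\eta\rangle = -\eta\|g\|^2$ and $\|d_\eta\|^2 = \eta^2\|g\|^2$, giving $\eta_\mu = \eta$ constant on $(0,1]$. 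This already handles one piece (monotone in the weak sense) and pins down the value $\eta$ at $\mu = 1$.

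For the second segment $\mu \in (1,2]$, write $d(\mu) = d_\eta + (\mu-1)(d_N - d_\eta)$ and set $v := d_N - d_\eta = -\bm{\mathrm{M}}^{-1}g + \eta g$. Then $\langle g, d(\mu)\rangle = \langle g, d_\eta\rangle + (\mu-1)\langle g, v\rangle$ and $\|d(\mu)\|^2 = \|d_\eta\|^2 + 2(\mu-1)\langle d_\eta, v\rangle + (\mu-1)^2\|v\|^2$. Abbreviating $a := \langle g, d_\eta\rangle = -\eta\|g\|^2 < 0$, $b := \langle g, v\rangle = -\|g\|_{\bm{\mathrm{M}}^{-1}}^2 + \eta\|g\|^2$, and using $d_\eta = -\eta g$ so that $\langle d_\eta, v\rangle = -\eta b$, we get $\eta_\mu = -\big(\eta^2\|g\|^2 - 2\eta(\mu-1)b + (\mu-1)^2\|v\|^2\big) / \big(a + (\mu-1)b\big)$. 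Differentiating this rational function of $t := \mu - 1 \in (0,1]$ and simplifying the numerator of $\eta_\mu'$, the cross terms should collapse: the derivative numerator reduces (up to the positive factor $(a+tb)^{-2}$ and a sign) to something of the form $-\|v\|^2 t^2 b - 2 a \|v\|^2 t + \eta^2 \|g\|^2 b + 2 \eta a b$; I expect this to factor as $-b\,\|\,d(\mu)\,\|^2 \;+\; (\text{terms combining to}) \; \langle g,d(\mu)\rangle\cdot 2\langle d(\mu), v\rangle / \ldots$ — more cleanly, using the identity $\eta_\mu = -\|d(\mu)\|^2/\langle g,d(\mu)\rangle$ directly, $\eta_\mu' = -\big(2\langle d(\mu),v\rangle\langle g,d(\mu)\rangle - \|d(\mu)\|^2\langle g,v\rangle\big)/\langle g,d(\mu)\rangle^2$, and the bracket can be rewritten using $\langle d(\mu), v\rangle = \langle d_\eta, v\rangle + t\|v\|^2 = -\eta b + t\|v\|^2$ as a manifestly-signed quantity.

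The cleanest route, which I would actually carry out, is to avoid differentiation: show $\eta_\mu$ equals $\|d(\mu)\|^2 / \big(\langle -g, d(\mu)\rangle\big)$ and note $\langle -g, d(\mu)\rangle = \langle -g, d_\eta\rangle + t\langle -g, v\rangle$, i.e.\ it is affine in $t$ with positive value at $t=0$ (equal to $\eta\|g\|^2$); meanwhile $\|d(\mu)\|^2$ is a convex quadratic in $t$. Then $1/\eta_\mu = \langle -g,d(\mu)\rangle/\|d(\mu)\|^2$, and I claim this is decreasing in $t$. Computing $\tfrac{d}{dt}(1/\eta_\mu)$ has numerator $\langle -g,v\rangle\|d(\mu)\|^2 - \langle -g,d(\mu)\rangle\cdot 2\langle d(\mu),v\rangle$; substituting $\langle -g, v\rangle = \|g\|_{\bm{\mathrm{M}}^{-1}}^2 - \eta\|g\|^2$ and the expressions above, and using the key fact from Lemma \ref{lem:3.1} that $\langle g, d(\mu)\rangle \le 0$ together with Cauchy–Schwarz-type bounds relating $\|g\|_{\bm{\mathrm{M}}^{-1}}^2$, $\|g\|^2$ and $\eta = 1/\lambda_{\max}$, this numerator should be $\le 0$. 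The main obstacle I anticipate is precisely this last sign verification: the expression mixes the three quantities $\|g\|^2$, $\|g\|_{\bm{\mathrm{M}}}^2$, $\|g\|_{\bm{\mathrm{M}}^{-1}}^2$ and does not obviously have a sign without invoking $\eta \le 1/\lambda_{\max}$ and perhaps the inequality $\|g\|^4 \le \|g\|_{\bm{\mathrm{M}}}^2\|g\|_{\bm{\mathrm{M}}^{-1}}^2$; lining up exactly which of these bounds is needed, and in which direction, is the delicate part. Finally I would check the junction: $\eta_\mu \to \eta$ as $t \to 0^+$ matches the first-segment value, so $\eta_\mu$ is continuous and nondecreasing on all of $[0,2]$, completing the proof.
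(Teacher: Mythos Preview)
Your approach---split into the two dogleg pieces, note $\eta_\mu\equiv\eta$ on $(0,1]$, and differentiate on $(1,2]$---is exactly what the paper does; its entire proof is the one line ``$\frac{d}{d\mu}\eta_\mu\ge 0$ can be proved with simple algebra.''

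The sign check you flag as the delicate part is in fact clean. With $t=\mu-1$, $N(t)=\|d(\mu)\|^2$, $D(t)=-\langle g,d(\mu)\rangle$ (so $\eta_\mu=N/D$), the derivative numerator $N'D-ND'$ is a quadratic in $t$ whose coefficients are
\[
\eta^2\|g\|^2\bigl(\|g\|_{\bm{\mathrm{M}}^{-1}}^2-\eta\|g\|^2\bigr),\qquad 2\eta\|g\|^2\|v\|^2,\qquad \|v\|^2\bigl(\|g\|_{\bm{\mathrm{M}}^{-1}}^2-\eta\|g\|^2\bigr),
\]
all nonnegative because $\eta\le 1/\lambda_{\max}$ gives $\|g\|_{\bm{\mathrm{M}}^{-1}}^2\ge\eta\|g\|^2$. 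The simplification comes entirely from $d_\eta=-\eta g$ being parallel to $g$, which forces $\langle d_\eta,v\rangle=-\eta\langle g,v\rangle$; no Cauchy--Schwarz and no appearance of $\|g\|_{\bm{\mathrm{M}}}^2$ is needed.
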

\begin{proof} The step size $\eta_\mu$ is considered an increasing function of $\mu$ within the interval $[0,2]$ provided that $\frac{d}{d\mu}\eta_\mu \geq 0$. The positive gradient can be proved with simple algebra.
\end{proof}

\begin{lemma}\label{lem:3.3}The sequence
$\left\{\eta_{\mu^k}\right\}_{k\in\mathbb{N}}$ is~bounded.
\end{lemma}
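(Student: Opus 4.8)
The plan is to combine the monotonicity of $\mu\mapsto\eta_\mu$ established in Lemma~\ref{lem:3.2} with the observation that, because $s$ is quadratic, the Hessian $\bm{\mathrm{M}}=\nabla^2 s$ (made positive definite via the small regularization noted in the footnote) is a \emph{fixed} matrix that does not vary with $k$. By Lemma~\ref{lem:3.2}, for each $k$ the quantity $\eta_{\mu^k}$, with $\mu^k\in(0,2]$, lies between its values at the two endpoints of the interval, so it suffices to bound $\eta_\mu$ as $\mu\to 0^+$ and at $\mu=2$. Since both of these will turn out to depend only on $\bm{\mathrm{M}}$ and on the \emph{direction} of $\nabla s(y^k)$, not on its magnitude, the resulting bounds are automatically uniform in $k$.

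For the lower bound, on the segment $\mu\in(0,1]$ one has $d(\mu)=d_\eta=-\eta\nabla s(y^k)$, and substituting this into the definition of $\eta_\mu$ gives $\eta_\mu=-\|d_\eta\|^2/\langle\nabla s(y^k),d_\eta\rangle=\eta$; hence by Lemma~\ref{lem:3.2} we get $\eta_{\mu^k}\ge\eta>0$ for all $k$. For the upper bound, again by Lemma~\ref{lem:3.2} it is enough to estimate $\eta_\mu$ at $\mu=2$, where the dogleg path reaches the Newton point $d(2)=d_N=-\bm{\mathrm{M}}^{-1}\nabla s(y^k)$. Writing $g=\nabla s(y^k)$ and assuming $g\neq 0$ (if $g=0$ the dogleg direction vanishes, $y^k$ is already a critical point of $s$, and $\eta_{\mu^k}$ is fixed by convention, which is irrelevant to boundedness), we obtain
\begin{align*}
\eta_2=-\frac{\|d(2)\|^2}{\langle g,d(2)\rangle}=\frac{g^{T}\bm{\mathrm{M}}^{-2}g}{g^{T}\bm{\mathrm{M}}^{-1}g}=\frac{h^{T}\bm{\mathrm{M}}^{-1}h}{h^{T}h}\le\lambda_{\max}(\bm{\mathrm{M}}^{-1})=\frac{1}{\lambda_{\min}(\bm{\mathrm{M}})},
\end{align*}
where $h:=\bm{\mathrm{M}}^{-1/2}g$ and the inequality is the standard Rayleigh-quotient bound.

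Combining the two estimates yields $\eta\le\eta_{\mu^k}\le 1/\lambda_{\min}(\bm{\mathrm{M}})$ for every $k\in\mathbb{N}$, and since $\bm{\mathrm{M}}$ is independent of $k$ and positive definite, both endpoints are finite positive constants; hence $\{\eta_{\mu^k}\}_{k\in\mathbb{N}}$ is bounded. The argument is largely routine; the only points that deserve care are the reduction to the endpoint $\mu=2$ (which is exactly what Lemma~\ref{lem:3.2} supplies), the verification that the upper bound depends only on the fixed Hessian and not on $k$ (which relies crucially on $s$ being quadratic), and the harmless degenerate case $\nabla s(y^k)=0$.
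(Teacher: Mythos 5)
Your proof is correct, and it takes a genuinely different (and more rigorous) route than the paper's. The paper's own proof simply writes out the explicit ratio $\eta_{\mu^k}=-\|d(\mu^k)\|^2/\bigl((\mu^k-2)\eta\|\nabla s(y^k)\|^2+(1-\mu^k)\|\nabla s(y^k)\|_{\bm{\mathrm{M}}^{-1}}^2\bigr)$ and asserts boundedness on the grounds that ``both the numerator and denominator are constrained by finite values'' because the eigenvalues of $\bm{\mathrm{M}}$ are bounded. As stated that is not a complete argument: a bounded numerator over a bounded denominator does not give a bounded quotient unless the denominator is controlled from below relative to the numerator (e.g.\ as $\nabla s(y^k)\to 0$ both tend to zero). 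Your argument supplies exactly the missing ingredient: you observe that $\eta_\mu$ is scale-invariant in $g=\nabla s(y^k)$, use the monotonicity of $\mu\mapsto\eta_\mu$ from Lemma~\ref{lem:3.2} to reduce to the endpoints $\mu=1$ and $\mu=2$, compute $\eta_1=\eta$ directly, and bound $\eta_2$ by the Rayleigh quotient of $\bm{\mathrm{M}}^{-1}$, yielding the explicit uniform bounds $\eta\le\eta_{\mu^k}\le 1/\lambda_{\min}(\bm{\mathrm{M}})$ (finite since $\bm{\mathrm{M}}$ is the fixed, positive-definite Hessian of the quadratic $s$). Your handling of the degenerate case $g=0$ is also appropriate. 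In short, your proof buys a fully rigorous, quantitative bound where the paper offers only a heuristic; the only dependency you add is Lemma~\ref{lem:3.2}, which the paper proves (albeit tersely) immediately beforehand, so this is a legitimate resource.
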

\begin{proof}Based on the definition of $\eta_{\mu}$, it holds that \\
\begin{align}
\eta_{\mu^k} &= -\frac{\|d(\mu^k)\|^2}{\langle \nabla s(y^k), d(\mu^k) \rangle} \nonumber\\
&= -\frac{\|d(\mu^k)\|^2}{(\mu^k - 2)\eta \| \nabla s(y^k) \|^2 +(1 - \mu^k){\left\|\nabla s({y}^k)\right\|_{{\bm{\mathrm{M}}}^{-1}}^2}}.
\label{eq:16}
\end{align}
Since the eigenvalues of ${\bm{\mathrm{M}}}$ are bounded, both the numerator and denominator are constrained by finite values. Consequently, the sequence
$\{\eta_{\mu^k}\}_{k\in\mathbb{N}}$ is established as bounded.
\end{proof}

In each iteration, the update rule is
\begin{align}
x^{k+1} &= \operatorname{prox}_{\eta_{\mu_k} r}\left( y^k + d(\mu^k) \right)\nonumber\\
&=\arg \min_{x} r(x) + \frac{1}{2\eta_{\mu_k}} \|x - (y^k + d(\mu^k))\|^2.
\label{eq:17}
\end{align}
It is not guaranteed that the new iterate $x^{k+1}$ results in a lower objective function value, since there is no assurance that $m_\mu(x; y^k)$ majorizes $s(x)$ for all choices of $\mu$. In what follows, we examine the MM condition under different ranges of $\mu$. We begin with the case where $\mu \in (0, 1]$, in which $m_\mu$ reduces to $m_{pg}$ with $g_\mu$ becoming $g$ and $\eta_\mu$ simplifying to $\eta$. According to \eqref{eq:7}, the surrogate function $m_\mu$ provides a uniform upper bound on $s$, meaning that
\[m_\mu({x};y^{k})\geq s({x}),\quad\forall{x}\in\operatorname{dom}f.
\]

Next, we consider the case where $\mu\in (1,2]$. In this range, the surrogate function $m_{\mu}$ only majorizes $s$ along the specific line segment that connects the current iterate and the path point. Unlike the classical MM principle cited in \cite{sun2016majorization,qiu2016prime}, where the surrogate is required to upper bound the objective function globally or over the entire domain, $m_{\mu}$ does not necessarily remain above function  $s$ everywhere. We refer to this more flexible condition as OM.

\begin{theorem}\label{thm:3.1} For any given $\mu\in(1,2]$, consider the line
connecting 0 and $d(\mu)$ which is given by
$$\mathcal{X}_\mu:=\left\{{x}(\beta):=\beta{d}(\mu):\:\forall\beta\in\mathbb{R}\right\}.$$
Define $\bar{s}(\beta):=s({x}(\beta))$ and $\bar{m}_{\mu}(\beta):=m_{\mu}({x}(\beta);y^{k})$. It holds that $\bar{s}(\beta)\leq\bar{m}_{\mu}(\beta)$ for all $\beta\in\mathbb{R}$, or equivalently, $s(x)\leq m_\mu({x};y^{k})$ for all $x\in\mathcal{X}_{\mu}$.
\end{theorem}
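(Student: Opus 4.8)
The plan is to reduce the claimed inequality to a one–dimensional statement along the ray $\mathcal{X}_\mu$ and then verify it by comparing two quadratics in $\beta$. First I would write out $\bar s(\beta)=s(\beta d(\mu))=\beta\langle g,d(\mu)\rangle+\tfrac12\beta^2\|d(\mu)\|_{\bm{\mathrm{M}}}^2$ using the coordinate-shifted form \eqref{eq:12}, and similarly $\bar m_\mu(\beta)=m_\mu(\beta d(\mu);y^k)=\beta\langle g_\mu,d(\mu)\rangle+\tfrac1{2\eta_\mu}\beta^2\|d(\mu)\|^2$ (dropping the constant $s(y^k)$, which appears on both sides once we translate coordinates, and recalling $y^k$ has been moved to the origin). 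The key algebraic observation is that, by the very definitions $g_\mu=\frac{\langle g,d(\mu)\rangle}{\|d(\mu)\|^2}d(\mu)$ and $\eta_\mu=-\frac{\|d(\mu)\|^2}{\langle g,d(\mu)\rangle}$, the \emph{linear} terms of $\bar s$ and $\bar m_\mu$ coincide: $\langle g_\mu,d(\mu)\rangle=\langle g,d(\mu)\rangle$ and $\frac{\|d(\mu)\|^2}{\eta_\mu}=-\langle g,d(\mu)\rangle=\langle g,d(\mu)\rangle$ up to the sign bookkeeping — the point is that $m_\mu$ was constructed precisely so that its gradient along $d(\mu)$ at the origin matches that of $s$. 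Hence $\bar m_\mu(\beta)-\bar s(\beta)=\tfrac12\beta^2\bigl(\tfrac{\|d(\mu)\|^2}{\eta_\mu}-\|d(\mu)\|_{\bm{\mathrm{M}}}^2\bigr)$, and the whole theorem collapses to showing the scalar coefficient $\tfrac{\|d(\mu)\|^2}{\eta_\mu}-\|d(\mu)\|_{\bm{\mathrm{M}}}^2\ge 0$.

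Next I would prove that coefficient is nonnegative. From the definition of $\eta_\mu$ we have $\frac{\|d(\mu)\|^2}{\eta_\mu}=-\langle\nabla s(y^k),d(\mu)\rangle=-\langle g,d(\mu)\rangle$; but $\nabla s$ at the point $d(\mu)$ (in shifted coordinates) is $g+\bm{\mathrm{M}}d(\mu)$, so Lemma \ref{lem:3.1} gives $\langle d(\mu),g+\bm{\mathrm{M}}d(\mu)\rangle\le 0$, i.e. $\langle g,d(\mu)\rangle\le-\|d(\mu)\|_{\bm{\mathrm{M}}}^2$, which rearranges to exactly $-\langle g,d(\mu)\rangle\ge\|d(\mu)\|_{\bm{\mathrm{M}}}^2$, that is $\frac{\|d(\mu)\|^2}{\eta_\mu}\ge\|d(\mu)\|_{\bm{\mathrm{M}}}^2$. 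Therefore the quadratic coefficient of $\beta^2$ in $\bar m_\mu-\bar s$ is nonnegative, and since the linear terms cancel and the constant terms are equal, $\bar m_\mu(\beta)-\bar s(\beta)=\tfrac12\beta^2\bigl(\tfrac{\|d(\mu)\|^2}{\eta_\mu}-\|d(\mu)\|_{\bm{\mathrm{M}}}^2\bigr)\ge 0$ for every $\beta\in\mathbb{R}$. Taking $\beta$ such that $x=\beta d(\mu)$ ranges over $\mathcal{X}_\mu$ yields $s(x)\le m_\mu(x;y^k)$ on $\mathcal{X}_\mu$, which is the assertion.

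The main obstacle — really the only subtle point — is the careful bookkeeping of the coordinate shift and signs: making sure that ``$\nabla s(d(\mu))$'' in Lemma \ref{lem:3.1} is read in the translated frame as $g+\bm{\mathrm{M}}d(\mu)$, that the constant $s(y^k)$ genuinely cancels between $\bar s$ and $\bar m_\mu$, and that the definitions of $g_\mu$ and $\eta_\mu$ are invoked with consistent signs so the linear-in-$\beta$ terms truly match. Once that is pinned down, the argument is a two-line comparison of quadratics driven entirely by Lemma \ref{lem:3.1}; I would also note in passing that Lemma \ref{lem:3.2} and the fact $\mu\in(1,2]$ are not needed here beyond guaranteeing $\eta_\mu>0$ so that $\bar m_\mu$ is genuinely a (convex) quadratic and the divisions are legitimate.
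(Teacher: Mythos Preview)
Your argument is correct. Both you and the paper reduce the claim to a one–dimensional comparison of two quadratics in $\beta$ that agree at $\beta=0$ in value and in slope (the paper records this as Lemma~\ref{lem:3.4}), and both ultimately invoke Lemma~\ref{lem:3.1} to finish. The difference is in how the quadratic terms are compared: the paper takes a small detour, first showing that the minimizer of $\bar m_\mu$ is $\beta=1$ while the minimizer of $\bar s$ lies at some $\beta\ge 1$ (Lemma~\ref{lem:3.6}, proved via Lemma~\ref{lem:3.1}), and then appeals to an abstract one–variable lemma (Lemma~\ref{lem:3.5}) stating that two tangent quadratics with ordered minimizers satisfy $m\ge s$. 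You instead compare the curvature coefficients directly: from $\eta_\mu=-\|d(\mu)\|^2/\langle g,d(\mu)\rangle$ you get $\|d(\mu)\|^2/\eta_\mu=-\langle g,d(\mu)\rangle$, and Lemma~\ref{lem:3.1} in the shifted frame reads $\langle g,d(\mu)\rangle+\|d(\mu)\|_{\bm{\mathrm{M}}}^2\le 0$, which is exactly the inequality on the $\beta^2$–coefficients. These are equivalent calculations (ordering the minimizers of two tangent quadratics with common nonzero slope is the same as ordering their curvatures), but your version is more economical since it skips the packaging into Lemmas~\ref{lem:3.5} and~\ref{lem:3.6}.
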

\begin{proof}
The proof of the theorem can be established by considering Lemma \ref{lem:3.4}, Lemma \ref{lem:3.5}, and Lemma \ref{lem:3.6}.
\end{proof}

\begin{lemma}\label{lem:3.4} Given $\bar{s}$ and $\bar{m}_{\mu }$ defined in Theorem \ref{thm:3.1}, it holds that $\bar{s}(0)=\bar{m}_{\mu }(0)=0$ and ${\bar{s}'\left (0\right )}={\bar{m }_{\mu } }'\left (0\right ) <0$.
\end{lemma}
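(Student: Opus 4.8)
The plan is to verify the two equalities at $\beta = 0$ directly from the definitions, and then compute the two derivatives at $\beta = 0$ and check they coincide and are negative. Recall that in the shifted coordinate system \eqref{eq:12} we have $s(x) = \langle g, x\rangle + \tfrac{1}{2}\|x\|_{\bm{\mathrm{M}}}^2$, so $\bar s(\beta) = s(\beta d(\mu)) = \beta \langle g, d(\mu)\rangle + \tfrac{1}{2}\beta^2 \|d(\mu)\|_{\bm{\mathrm{M}}}^2$, which immediately gives $\bar s(0) = 0$. For the surrogate, from \eqref{eq:15} (rewritten around the shifted origin) $m_\mu(x;y^k) = \langle g_\mu, x\rangle + \tfrac{1}{2\eta_\mu}\|x\|^2$, so $\bar m_\mu(\beta) = \beta \langle g_\mu, d(\mu)\rangle + \tfrac{1}{2\eta_\mu}\beta^2\|d(\mu)\|^2$, whence $\bar m_\mu(0) = 0$. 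That settles the first claim.

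Next I would differentiate. We have $\bar s'(\beta) = \langle g, d(\mu)\rangle + \beta\|d(\mu)\|_{\bm{\mathrm{M}}}^2$, so $\bar s'(0) = \langle g, d(\mu)\rangle = \langle \nabla s(y^k), d(\mu)\rangle$. Similarly $\bar m_\mu'(\beta) = \langle g_\mu, d(\mu)\rangle + \tfrac{1}{\eta_\mu}\beta\|d(\mu)\|^2$, so $\bar m_\mu'(0) = \langle g_\mu, d(\mu)\rangle$. Now I use the definition $g_\mu = \frac{\langle \nabla s(y^k), d(\mu)\rangle}{\|d(\mu)\|^2} d(\mu)$, which gives $\langle g_\mu, d(\mu)\rangle = \langle \nabla s(y^k), d(\mu)\rangle$; hence $\bar m_\mu'(0) = \bar s'(0)$, establishing the equality of the derivatives. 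Finally, for the sign: $\bar s'(0) = \langle \nabla s(y^k), d(\mu)\rangle$, and since we are in the shifted coordinates where $y^k$ corresponds to the origin, $\nabla s(y^k) = g$ and the inner product $\langle g, d(\mu)\rangle$ is exactly the quantity bounded in the proof of Lemma~\ref{lem:3.1} (the term $-\eta\|g\|^2 + (\mu-1)(\eta\|g\|^2 - \|g\|_{\bm{\mathrm{M}}^{-1}}^2)$ in \eqref{eq:14}), which is shown there to be $< 0$ for $\mu \in (1,2]$ and $\eta \in (0, 1/\lambda_{\max}]$. Therefore $\bar s'(0) = \bar m_\mu'(0) < 0$.

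The computations here are all routine once the shifted-coordinate representations \eqref{eq:12} and the definitions of $g_\mu$, $\eta_\mu$ are in hand; the only point requiring a little care is making sure the sign argument is legitimate, i.e., that $\langle \nabla s(y^k), d(\mu)\rangle < 0$ strictly. This follows from Lemma~\ref{lem:3.1} provided $\eta$ is taken in $(0, 1/\lambda_{\max}]$, with strict inequality when $\mu > 1$; one must also note that $d(\mu) \neq 0$ (else $g = 0$ and $y^k$ is already critical, a case that can be excluded), so that $g_\mu$ and $\eta_\mu$ are well-defined and the quadratics above are genuinely quadratic rather than degenerate. No real obstacle is anticipated — this lemma is a bookkeeping step feeding into the proof of Theorem~\ref{thm:3.1}, where the substantive comparison $\bar s(\beta) \le \bar m_\mu(\beta)$ for \emph{all} $\beta$ will be carried out (presumably in Lemmas~\ref{lem:3.5} and~\ref{lem:3.6}).
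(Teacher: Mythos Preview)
Your proof is correct and follows essentially the same route as the paper: both compute $\bar s(0)=\bar m_\mu(0)=0$ from the shifted-coordinate forms, both obtain $\bar s'(0)=\langle g,d(\mu)\rangle$ and $\bar m_\mu'(0)=\langle g_\mu,d(\mu)\rangle=\langle g,d(\mu)\rangle$ by direct differentiation and the definition of $g_\mu$, and both appeal to Lemma~\ref{lem:3.1} for the strict negativity. Your version is slightly more explicit (spelling out the $g_\mu$ inner-product identity and the nondegeneracy $d(\mu)\neq 0$), but the argument is the same.
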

\begin{proof} It is easy to show that $\bar{s}(0)=s(x(0))=0$, and $\bar{m}_\mu(0)=m_{\mu}(x(0);y^{k})=0$. Then we prove the negative gradient. It holds that
$$\bar{s}^{\prime}(0)=\beta{d}(\mu)^{\mathsf{T}}{{\bm{\mathrm{M}}}}{d}(\mu)+{g}^{\mathsf{T}}{d}(\mu)|_{\beta=0}=\langle{g},{d}(\mu)\rangle, $$
$$\bar{m}_{\mu}^{\prime}(0)={g}_{\mu}^{\mathsf{T}}{d}(\mu)+\frac{1}{\eta_{\mu}}\beta\|{d}(\mu)\|^{2}|_{\beta=0}=\langle{g},{d}(\mu)\rangle.$$
From Lemma \ref{lem:3.1}, we prove that
$\bar{s}^{\prime}(0)=\bar{m}_{\mu}^{\prime}(0)<0.$
\end{proof}

\begin{lemma}\label{lem:3.5}Let $m(x;y^{k})$ and $s(x)$ be univariate strictly convex quadratic functions. Suppose that\\
(i) $m(0)=s(0)$ and $m'(0) = s'(0) \neq 0$; \\
(ii) $x_{m}^{\star}= \eta x_{s}^{\star}$  for some  $\eta \in (0,1)$, where
$x_{m}^{\star} := \operatorname*{arg\,min}_x m(x)~and~x_{s}^{\star} := \operatorname*{arg\,min}_x s(x),$

then, it holds that
\begin{itemize}
    \item $\begin{aligned}[t]
        \left|\frac{x_m^\#}{m^{\prime}(0)}\right| < \left|\frac{x_s^\#}{s^{\prime}(0)}\right|;
    \end{aligned}$
    
    \item $\begin{aligned}[t]
         m(x;y^{k}) \geq s(x) \text{ for all } x, \text{ and the equality holds if and only if } x=0.
    \end{aligned}$
\end{itemize}

\end{lemma}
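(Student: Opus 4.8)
\textbf{Proof proposal for Lemma \ref{lem:3.5}.}

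The plan is to work entirely with the explicit parabola forms of the two quadratics and reduce both conclusions to a single inequality between the coefficients of the quadratic terms. Write $s(x) = \tfrac{a}{2}x^2 + bx + c$ and $m(x;y^k) = \tfrac{p}{2}x^2 + qx + e$ with $a,p>0$ (strict convexity). Condition (i) forces $c = e$ and $b = q \neq 0$; for notational ease I would absorb $c=e$ and simply take $s(0)=m(0)=0$, since only differences enter the claim. The unconstrained minimizers are then $x_s^\star = -b/a$ and $x_m^\star = -b/p$, so condition (ii), $x_m^\star = \eta x_s^\star$ with $\eta\in(0,1)$, is exactly $a/p = \eta$, i.e. $p = a/\eta > a$. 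Thus the whole content of the lemma is: the surrogate $m$ has a strictly larger leading coefficient than $s$ while sharing the same value and slope at $0$.

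From $p>a>0$ the second bullet is immediate: $m(x;y^k) - s(x) = \tfrac{1}{2}(p-a)x^2 \geq 0$ for all $x$, with equality iff $x=0$. For the first bullet I first need to interpret the symbol $x_m^\#$ (resp. $x_s^\#$); reading it as the minimizer (the ``$\star$'' of the statement, the $\#$ being a typo, or else a critical point of the same quadratic, which coincides with the minimizer), we have $x_m^\# = -b/p$ and $x_s^\# = -b/a$, hence
\begin{align}
\left|\frac{x_m^\#}{m'(0)}\right| = \left|\frac{-b/p}{b}\right| = \frac{1}{p} < \frac{1}{a} = \left|\frac{-b/a}{b}\right| = \left|\frac{x_s^\#}{s'(0)}\right|,
\end{align}
using $m'(0) = q = b = s'(0)$ and $p > a$. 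So both conclusions collapse to the chain $p = a/\eta$, $\eta<1$, $\Rightarrow p>a$.

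The only genuine obstacle is bookkeeping rather than mathematics: making sure the hypotheses are read correctly. In particular I must confirm that ``$x_m^\star = \eta x_s^\star$'' translates to $p = a/\eta$ and not $p = \eta a$ (it does, since the minimizer is inversely proportional to the leading coefficient), because the direction of the resulting inequality $p>a$ is exactly what both bullets need; getting this backwards would reverse everything. I would also note that $b\neq 0$ (from (i)) is what makes the ratios in the first bullet well-defined and the minimizers nonzero, so the strict inequalities are genuine. Once the translation $\eta\in(0,1)\Leftrightarrow p>a$ is in place, both statements are one-line consequences, and no further estimates are needed; this is presumably why the subsequent Lemma \ref{lem:3.6} and Theorem \ref{thm:3.1} can invoke it to pass from the one-dimensional comparison along $\mathcal{X}_\mu$ back to the claim $s(x)\leq m_\mu(x;y^k)$.
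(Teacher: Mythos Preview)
Your proposal is correct and follows essentially the same route as the paper: both arguments write the two quadratics in explicit form (the paper parametrizes the leading coefficients as $1/\eta_s$ and $1/\eta_m$ where you use $a$ and $p$), use (i) to match the constant and linear terms, use (ii) to deduce that the surrogate's leading coefficient strictly dominates, and then read off both bullets from that single inequality. Your reading of $x^\#$ as the minimizer $x^\star$ is exactly how the paper uses it.
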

\begin{proof} As both $m(x;y)$ and $s(x)$ are quadratic, one can write them as $s(x)=s(0)+s'(0)x+\frac{1}{2\eta_s}x^2$ and $m(x;y^{k})=s(0)+s'(0)x+\frac{1}{2\eta_m}x^2$. It is clear that $x_q^\#= -\eta_s s'(0), x_m^\#= -\eta_m s'(0)$. From the assumption that
$x_m^\#=\arg\min_xm(x;y^{k})=\eta\arg\min_xs(x)=\eta x_s^\#$,
it holds that
$\left|\frac{x_m^\#}{m'(0)}\right|=\eta_m = \eta\eta_f < \eta_f = \left|\frac{x_s^\#}{s'(0)}\right|$,
or equivalently, $\frac{1}{\eta_m} > \frac{1}{\eta_f}$. Therefore, $m(x;y^{k}) \ge s(x)$ where the equality holds if and only if $x=0$. Both claims in the lemma are therefore proved.
Based on Lemma \ref{lem:3.4} and Lemma \ref{lem:3.5}, Theorem \ref{thm:3.1} can be proved by showing the lemma below.
\end{proof}

\begin{lemma}\label{lem:3.6} Considering $\bar{s}(\beta)$ and $\bar{m}(\beta)$ defined in Theorem \ref{thm:3.1}, it holds that $1=\arg\min_{\beta}\bar{m}_{\alpha}(\beta)\leq\arg\min_{\beta}\bar{s}(\beta).$
\end{lemma}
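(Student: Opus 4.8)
The goal is to prove Lemma~\ref{lem:3.6}: that $1 = \arg\min_{\beta}\bar{m}_{\mu}(\beta) \le \arg\min_{\beta}\bar{s}(\beta)$, where $\bar{s}(\beta) = s(\beta d(\mu))$ and $\bar{m}_{\mu}(\beta) = m_{\mu}(\beta d(\mu); y^k)$ are the restrictions of the smooth objective and its surrogate to the line through $0$ and $d(\mu)$.

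The plan is to compute both minimizers explicitly using the one-dimensional quadratic forms. First I would establish that $\arg\min_{\beta}\bar{m}_{\mu}(\beta) = 1$: from the second expression for $m_{\mu}$ in \eqref{eq:15}, namely $m_{\mu}(x;y^k) = s(y^k) + \frac{1}{2\eta_{\mu}}\|x - (y^k + d(\mu))\|^2 - \frac{\eta_{\mu}}{2}\|g_{\mu}\|^2$, after the coordinate shift putting $y^k$ at the origin we have $\bar{m}_{\mu}(\beta) = s(y^k) + \frac{1}{2\eta_{\mu}}\|\beta d(\mu) - d(\mu)\|^2 - \frac{\eta_{\mu}}{2}\|g_{\mu}\|^2 = s(y^k) + \frac{(\beta-1)^2}{2\eta_{\mu}}\|d(\mu)\|^2 - \frac{\eta_{\mu}}{2}\|g_{\mu}\|^2$, which is a strictly convex quadratic in $\beta$ minimized exactly at $\beta = 1$. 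Next I would compute $\arg\min_{\beta}\bar{s}(\beta)$: since $\bar{s}(\beta) = \langle g, \beta d(\mu)\rangle + \frac{1}{2}\beta^2 \|d(\mu)\|_{\bm{\mathrm{M}}}^2$, setting the derivative to zero gives $\beta_s^{\star} = -\frac{\langle g, d(\mu)\rangle}{\|d(\mu)\|_{\bm{\mathrm{M}}}^2}$. By Lemma~\ref{lem:3.1}, $\langle g, d(\mu)\rangle = \langle \nabla s(y^k), d(\mu)\rangle < 0$ (or $\le 0$), so $\beta_s^{\star} > 0$, and the inequality $1 \le \beta_s^{\star}$ becomes $-\langle g, d(\mu)\rangle \ge \|d(\mu)\|_{\bm{\mathrm{M}}}^2$, i.e. $\langle g, d(\mu)\rangle + \|d(\mu)\|_{\bm{\mathrm{M}}}^2 \le 0$. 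But $\langle g, d(\mu)\rangle + \|d(\mu)\|_{\bm{\mathrm{M}}}^2 = \langle g + \bm{\mathrm{M}} d(\mu), d(\mu)\rangle = \langle \nabla s(d(\mu)), d(\mu)\rangle$, which is exactly the quantity shown to be $\le 0$ in Lemma~\ref{lem:3.1}. That closes the argument.

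So the structure is: (1) quote \eqref{eq:15} to read off $\bar{m}_{\mu}(\beta)$ as a perfect-square quadratic centered at $\beta=1$, concluding $\arg\min_{\beta}\bar{m}_{\mu}(\beta) = 1$; (2) differentiate $\bar{s}(\beta)$ (using the shifted form $s(x) = \langle g,x\rangle + \tfrac12\|x\|_{\bm{\mathrm{M}}}^2$ from \eqref{eq:12}) to get $\arg\min_{\beta}\bar{s}(\beta) = -\langle g,d(\mu)\rangle / \|d(\mu)\|_{\bm{\mathrm{M}}}^2$; (3) observe the desired inequality $1 \le \arg\min_{\beta}\bar{s}(\beta)$ is equivalent, after multiplying by the positive quantity $\|d(\mu)\|_{\bm{\mathrm{M}}}^2$ (positive-definiteness of $\bm{\mathrm{M}}$ and $d(\mu)\neq 0$), to $\langle \nabla s(d(\mu)), d(\mu)\rangle \le 0$; (4) invoke Lemma~\ref{lem:3.1} to finish, noting that the inequality is strict when $\eta \in (0, 1/\lambda_{\max})$.

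The main obstacle, such as it is, is bookkeeping rather than depth: one must be careful that the coordinate transformation of \eqref{eq:12} is being used consistently, so that $\nabla s(x) = g + \bm{\mathrm{M}} x$ in the shifted coordinates and hence $\nabla s(d(\mu)) = g + \bm{\mathrm{M}} d(\mu)$, making the identity $\langle g, d(\mu)\rangle + \|d(\mu)\|_{\bm{\mathrm{M}}}^2 = \langle \nabla s(d(\mu)), d(\mu)\rangle$ exact. One should also check that $d(\mu) \neq 0$ (which holds whenever $g = \nabla s(y^k) \neq 0$, the only case of interest since otherwise $y^k$ is already critical), so that $\|d(\mu)\|_{\bm{\mathrm{M}}}^2 > 0$ and the division is legitimate. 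Finally, combining this lemma with Lemma~\ref{lem:3.4} (matching value and derivative at $\beta=0$) and Lemma~\ref{lem:3.5} (the structural comparison of two quadratics agreeing to first order at $0$ whose minimizers are proportional with ratio in $(0,1)$) yields Theorem~\ref{thm:3.1}, since here the ratio is $\eta := 1/\beta_s^{\star} \in (0,1]$.
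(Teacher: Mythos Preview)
Your proof is correct and follows essentially the same route as the paper. The paper verifies $\bar{m}_{\mu}'(1)=0$ (equivalent to your completed-square observation) and then checks $\bar{s}'(1)=\langle d(\mu),\nabla s(d(\mu))\rangle\le 0$ via Lemma~\ref{lem:3.1}, which together with $\bar{s}'(0)<0$ forces $\arg\min_\beta \bar{s}(\beta)\ge 1$; your explicit computation of $\beta_s^\star=-\langle g,d(\mu)\rangle/\|d(\mu)\|_{\bm{\mathrm{M}}}^2$ and the reduction $1\le\beta_s^\star\iff\langle\nabla s(d(\mu)),d(\mu)\rangle\le 0$ is the same argument in slightly different packaging.
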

\begin{proof}
 It is established that
$$\bar{m}_\mu'(1)=\left.{g}_\mu^\mathsf{T}{d}(\mu)+\frac{1}{\eta_\mu}\beta\|{d}(\mu)\|^2\right|_{\beta=1}=0.$$
The claim that $1 = \arg\min_\beta \bar{m}_\mu(\beta)$ is therefore proved. We now show that $\bar{s}'(1) \leq 0$. It is clear that $$\bar{s}'(1) = \left.\beta d(\mu)^T {\bm{\mathrm{M}}}d(\mu) + g^T d(\mu)\right|_{\beta=1} \leq 0,$$
where the last inequality comes from  Lemma \ref{lem:3.1}. Combining this with Lemma \ref{lem:3.4} that $\bar{Q}^\prime(0)=\bar{m}_\mu^\prime(0)<0$, it can be concluded that $1=\arg\min_{\beta}\bar{m}_{\mu}(\beta)\leq\arg\min_{\beta}\bar{s}(\beta).$ This completes the proof.
\end{proof}
Proximal line search-based algorithms implicitly employ the principle of OM, although this principle is typically not explicitly recognized or articulated \cite{bonettini2017convergence}. Herein, we explicitly define the OM concept and integrate it into a Newton-type optimization framework.

\subsection{Algorithm Development}
Theorem \ref{thm:3.1} indicates that, by using the non-trivial surrogate function \eqref{eq:15}, the majorization condition is satisfied as long as the new iterate lies along the specified line. This ensures that the sequence $\left\{Q\left(x^{k}\right)\right\}_{k\in\mathbb{N}}$ is monotonically decreasing. To accelerate the convergence rate of the proximal gradient method, Ochs et al. \cite{ochs2014ipiano} introduced an inertial mechanism commonly referred to as extrapolation \cite{yang2024proximal,wen2017linear} into the proximal update framework. Based on the improved algorithm presented in \cite{zhou2024proximal}, we thus develop  the sPDOME algorithm, designed for solving problem \eqref{eqz2}. The corresponding iterative scheme is formulated as follows
\begin{align}
m_{\gamma,\mu}({x};{y}^{k}):&=s({y}^{k})+\left\langle{g}_{\mu},{x}-{y}^{k}\right\rangle+\frac{1}{2\gamma\eta_{\mu}}\left\|{x}-{y}^{k}\right\|^{2}\nonumber \\
&=s({y}^{k})+\frac{1}{2\gamma\eta_{\mu}}\left\|{x}-\left({y}^{k}+{d}_{\gamma}(\mu)\right)\right\|^{2}-\frac{\gamma\eta_{\mu}}{2}\left\|{g}_{\mu}\right\|^{2},
\label{eq:18}
\end{align}
where $d_\gamma(\mu)=\gamma{d}(\mu)$, and $\gamma\in(0,1)$ is a constant and typically set close to 1 in numerical experiments. The new iterate is
\begin{align}
x^{k+1}:&=\operatorname{prox}_{\gamma \eta_{\mu^k}r}(y^k+d_\gamma(\mu^k))\nonumber \\
&=\arg\min_{{x}}r({x})+\frac{1}{2\gamma\eta_{\mu^k}}\|x-(y^k+d_\gamma(\mu^k))\|^2,
\label{eq:19}
\end{align}
which remains easy to solve given the assumption that the standard proximal operator is computationally simple. The overall algorithm is summarized below. To find the largest $\mu^k$, Line 5 uses a strategy similar to that presented in \cite{stella2017simple}.

\begin{algorithm}[H]
\caption{sPDOME: simple Proximal Dogleg Opportunistic Majorization with Extrapolation}\label{Algorithm1}
\SetAlgoLined 
\DontPrintSemicolon 
\SetNlSty{}{}{} 

\KwIn{$x^0 \in \mathbb{R}^n, \bm{\mathrm{M}}^{-1} \in \mathbb{R}^{n \times n}, \eta \in (0, 1/L_s), \gamma \in (0, 1), \zeta \in (0, 1), k=0.$} 
\KwOut{$x^k.$} 
\Repeat {\mbox{stopping conditions are satisfied.}}
{ 
    $y^k = x^k + \zeta(x^k - x^{k-1}).$\; 
    Compute $x^{k+1}$ using $x^{k+1} = \text{prox}_{\gamma\eta_{\mu^k} r}(y^k + d_\gamma(\mu^k))$ for the largest $\mu^k \in \{1 + (1/2)^i \mid i \in \mathbb{N}\}$ such that $m_{\mu^k}(x^{k+1}; y^k) \geq s(x^{k+1}).$\;
    Compute $v^{k+1}$ using $v^{k+1} = \text{prox}_{\eta r}(y^k - \eta \nabla s(y^k)),$\;
    \If{$Q(x^{k+1}) > Q(v^{k+1})$}{
        set $x^{k+1} = v^{k+1}.$\;
    }
    
    \SetNlSty{textbf}{}{.} 
    \setcounter{AlgoLine}{6} 
    
    $k \leftarrow k + 1.$\; 
} 
\end{algorithm}

\begin{remark}
If $\zeta=0$, the sPDOME Algorithm \ref{Algorithm1} reduces to $PDOM$ Algorithm \cite{zhou2024proximal}.
\end{remark}

Building upon Algorithm \ref{Algorithm1}, we made some minor adjustments to the range of the inertial coefficient $\zeta$ 
and improved the line search step for
$\mu^{k}$ to be found, such as $\langle-\nabla s\left({y}^{k}\right)+\frac{\langle \nabla s\left({y}^{k}\right),{d}(\mu^{k})\rangle}{\|{d}(\mu^{k})\|^{2}}{d}(\mu^{k}),x^{k}-y^{k}\rangle\leq0$, thus obtaining the PDOME method, see  Algorithm \ref{Algorithm2} for details.

\begin{algorithm}[H]
\caption{PDOME: Proximal Dogleg Opportunistic Majorization  with extrapolation}\label{Algorithm2}
\SetAlgoLined 
\DontPrintSemicolon 
\SetNlSty{}{}{} 

\KwIn{$x^0 \in \mathbb{R}^n, \bm{\mathrm{M}}^{-1} \in \mathbb{R}^{n \times n}, \eta \in (0, 1/L_s), \gamma \in (0, 1), \zeta\in(0,\frac{1-\gamma}{2-\gamma}), k=0.$} 
\KwOut{$x^k.$} 
\Repeat{ stopping conditions are satisfied.}{ 
    $y^k = x^k + \zeta(x^k - x^{k-1}).$\; 
    Compute $x^{k+1}$ using $x^{k+1} = \text{prox}_{\gamma\eta_{\mu^k} r}(y^k + d_\gamma(\mu^k))$ for the largest $\mu^k \in \{1 + (1/2)^i \mid i \in \mathbb{N}\}$ such that $m_{\mu^k}(x^{k+1}; y^k) \geq s(x^{k+1})$
    and $\left\langle-\nabla s\left({y}^{k}\right)+\frac{\left\langle \nabla s\left({y}^{k}\right),{d}(\mu^{k})\right\rangle}{\|{d}(\mu^{k})\|^{2}}{d}(\mu^{k}),x^{k}-y^{k}\right\rangle\leq0.$
    \;
    Compute $v^{k+1}$ using $v^{k+1} = \text{prox}_{\eta r}(y^k - \eta \nabla s(y^k)).$\;
    \If{$Q(x^{k+1}) > Q(v^{k+1})$}{
        set $x^{k+1} = v^{k+1}.$\;
    }
    
    \SetNlSty{textbf}{}{.} 
    \setcounter{AlgoLine}{6} 
    
    $k \leftarrow k + 1.$\; 
} 
\end{algorithm}
The PDOME algorithm is designed to terminate upon approaching a critical point $x^{\ast}$, at which the condition $0\in\partial Q(x^{\star})$ is satisfied. Based on the optimality condition associated with the proxima operator in \eqref{eq:19}, the following relation holds:
\begin{align}
& 0\in\frac{1}{\gamma\eta_{\mu^{k}}}\left({x}^{k+1}-{y}^{k}-{d}_{\gamma}(\mu^{k})\right)+\partial r({x}^{k+1}), &
\label{eq:20}
\end{align}
this implies
\begin{align}
\partial Q({x}^{k+1})&=\nabla s({x}^{k+1})+\partial r({x}^{k+1}) \nonumber \\
& \ni\nabla s({x}^{k+1})+\frac{1}{\gamma\eta_{\mu^k}}\left({y}^k+{d}_\gamma(\mu^k)-{x}^{k+1}\right) \nonumber \\
& =\left(\nabla s({x}^{k+1})-{g}_{\mu^k}\right)-\frac{1}{\gamma\eta_{\mu^k}}\left({x}^{k+1}-{y}^k\right)\nonumber \\
&=\left(\nabla s({x}^{k+1})-{g}_{\mu^k}\right)-\frac{1}{\gamma\eta_{\mu^k}}\left({x}^{k+1}-{x}^k-\zeta({x}^k-{x}^{k-1})\right)\nonumber \\
&=\left(\nabla s({x}^{k+1})-{g}_{\mu^k}\right)-\frac{1}{\gamma\eta_{\mu^k}}({x}^{k+1}-{x}^{k})+\frac{\zeta}{\gamma\eta_{\mu^k}}({x}^k-{x}^{k-1}).  \label{eq:21}
\end{align}
PDOME terminates when $\|\partial Q(\boldsymbol{x}^{k+1})\|$  is sufficiently small:
\begin{equation}
\begin{split}
\|\partial Q(x^{k+1})\|& \leq \sqrt{n}\epsilon^{\mathrm{abs}} + \epsilon^{\mathrm{rel}} \max\Big\{
\|\nabla s(x^{k+1})\|, \ \|g_{\mu^k}\|, \frac{1}{\gamma\eta_{\mu^k}}\|x^{k+1}\|,
\frac{\zeta+1}{\gamma\eta_{\mu^k}}\|x^k\|, \ \frac{\zeta}{\gamma\eta_{\mu^k}}\|x^{k-1}\|
\Big\},
\label{eq:22}
\end{split}
\end{equation}
where $n$ is the dimension of $x$, $\epsilon^{abs}>0$ and $\epsilon^{rel}>0$ are two small positive constants (motivated~by~
\cite[Section~3.3]{boyd2011distributed}).

This stopping criterion is different from directly using $\|x^{k+1}-x^k \|$, commonly adopted for proximal algorithms \cite{lee2014proximal}. The relationship between these two different stopping criteria can be roughly quantified by the triangle inequality
\begin{align}
\left\|\partial Q({x}^{k+1})\right\|&\leq\left\|\nabla s({x}^{k+1})-{g}_{\mu^k}\right\|+\frac{1}{\gamma\eta_{\mu^k}}\left\|{x}^{k+1}-{x}^k\right\|
+\frac{\zeta}{\gamma\eta_{\mu^k}}\|{x}^k-{x}^{k-1}\|.
\label{eq:23}
\end{align}
As $1/\eta_{\mu^k}$ in \eqref{eq:23} can be very large, small value of $\|x^{k+1}-x^k\|$ does not necessarily imply getting close to a critical point. Now we formally present the PDOME in  Algorithm \ref{Algorithm2}. To track the largest $\mu^k$, we employ a similarly straightforward strategy as presented in \cite{stella2017simple}.

\subsection{The Convergence and Convergence Rate Analysis}\label{sec:convergence}
In this subsection, we analyze the convergence behavior of the sequence generated by the  Algorithm \ref{Algorithm2}, showing that it converges to a critical point of $H_\delta(x_{k})$. Additionally, under the KL condition, we derive the global convergence rate. We begin by establishing the monotonic decrease of the objective function values throughout the iterations.
\begin{theorem}
The sequence $\left\{Q(x^k)\right\}_{k\in\mathbb{N}}$ generated by
Algorithm \ref{Algorithm2} satisfies the following inequality.
\end{theorem}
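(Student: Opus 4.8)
The plan is to establish a sufficient decrease inequality for an auxiliary (Lyapunov-type) function rather than for $Q$ itself, since the extrapolation step breaks the naive monotonicity $Q(x^{k+1})\le Q(x^k)$. The natural candidate is
\begin{equation*}
H_\delta(x^{k},x^{k-1}):=Q(x^{k})+\frac{\delta}{2}\|x^{k}-x^{k-1}\|^2
\end{equation*}
for a suitable constant $\delta>0$ depending on $\gamma$, $\zeta$, and the eigenvalue bounds $\lambda_{\min},\lambda_{\max}$ of $\bm{\mathrm{M}}$. The conclusion the theorem is building toward should read as
\begin{equation*}
H_\delta(x^{k+1},x^{k})\le H_\delta(x^{k},x^{k-1})-\rho\|x^{k+1}-x^{k}\|^2
\end{equation*}
for some $\rho>0$; I would state and prove exactly this.

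First I would use the OM property guaranteed by the line search in Line 3 of Algorithm \ref{Algorithm2}, namely $m_{\mu^k}(x^{k+1};y^k)\ge s(x^{k+1})$, to write $Q(x^{k+1})=s(x^{k+1})+r(x^{k+1})\le m_{\gamma,\mu^k}(x^{k+1};y^k)+r(x^{k+1})$ (noting $m_{\gamma,\mu}\ge m_{\mu}$ since $\gamma<1$ enlarges the quadratic coefficient $1/(2\gamma\eta_\mu)$, so one must be slightly careful here — actually $m_{\gamma,\mu}\ge m_\mu\ge s$ on the relevant segment, which is fine). Then, because $x^{k+1}$ is the proximal minimizer in \eqref{eq:19}, $m_{\gamma,\mu^k}(x^{k+1};y^k)+r(x^{k+1})\le m_{\gamma,\mu^k}(x^{k};y^k)+r(x^{k})$. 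Expanding $m_{\gamma,\mu^k}(x^{k};y^k)$ using the completed-square form in \eqref{eq:18} and the definition $y^k=x^k+\zeta(x^k-x^{k-1})$, I would bound $s(x^k)+\langle g_{\mu^k},x^k-y^k\rangle$ in terms of $s(x^k)$ plus error terms in $\|x^k-x^{k-1}\|^2$; here the improved line-search condition $\langle -\nabla s(y^k)+g_{\mu^k},\,x^k-y^k\rangle\le 0$ is exactly what is needed to control the cross term $\langle g_{\mu^k}-\nabla s(y^k), x^k-y^k\rangle$ and relate $\langle\nabla s(y^k),x^k-y^k\rangle$ back to $s(x^k)-s(y^k)$ via the quadratic identity for $s$. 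Collecting terms yields $Q(x^{k+1})\le Q(x^k) + (\text{coefficient})\|x^k-x^{k-1}\|^2 - (\text{coefficient})\|x^{k+1}-x^k\|^2$, and one then picks $\delta$ between these two coefficients to get the telescoping Lyapunov inequality; finally the $Q(x^{k+1})\le Q(v^{k+1})$ safeguard in Lines 4–5 only makes the left side smaller, so the bound survives.

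The main obstacle I anticipate is the bookkeeping in the quadratic expansion: unlike the plain PDOM analysis, the surrogate is centered at the extrapolated point $y^k$, not $x^k$, so $m_{\gamma,\mu^k}(x^{k};y^k)$ does not collapse cleanly to $Q(x^k)$ and instead leaves a residual proportional to $\eta_{\mu^k}\|g_{\mu^k}\|^2$ and to $\langle\nabla s(y^k),x^k-y^k\rangle$. Pinning down the exact constant $\rho>0$ requires simultaneously (i) using Lemma \ref{lem:3.3} to bound $\eta_{\mu^k}$ (and hence $1/(\gamma\eta_{\mu^k})$ from below/above), (ii) using $\eta\le\eta_{\mu^k}$ and $\eta<1/L_s=1/\lambda_{\max}$, and (iii) using the step-size restriction $\zeta\in(0,\frac{1-\gamma}{2-\gamma})$ from the algorithm's input, which is presumably tuned precisely so that the net coefficient of $\|x^{k+1}-x^k\|^2$ stays strictly positive after absorbing the extrapolation error. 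Verifying that this particular interval for $\zeta$ makes $\rho>0$ is the delicate quantitative heart of the argument; everything else is the standard MM/proximal descent machinery applied segment-wise on $\mathcal{X}_{\mu^k}$ as in Theorem \ref{thm:3.1}.
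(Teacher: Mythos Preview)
You have aimed at the wrong target. The theorem statement is admittedly incomplete (``the following inequality'' is never displayed), but the paper's own proof makes clear that the intended content is only the elementary majorization chain
\[
Q(x^{k+1})=s(x^{k+1})+r(x^{k+1})\le m_{\mu^k}(x^{k+1};y^k)+r(x^{k+1})\le m_{\gamma,\mu^k}(x^{k+1};y^k)+r(x^{k+1})\le m_{\gamma,\mu^k}(x^{k};y^k)+r(x^{k}),
\]
using, in order, the backtracking condition in Line~3, the fact that $\gamma<1$ enlarges the quadratic coefficient, and the proximal optimality of $x^{k+1}$. That is the entire theorem: three inequalities, no Lyapunov function, no constants to tune.

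What you have sketched is instead the content of Lemma~\ref{lem:3.7}, where the paper establishes the sufficient-decrease inequality for $H_{\delta_k}(x^k)=Q(x^k)+\tfrac{\zeta}{2\gamma\eta_{\mu^k}}\|x^k-x^{k-1}\|^2$. Your outline for \emph{that} result is essentially the paper's argument: combine the proximal optimality inequality for $r$, the surrogate bound $s(x^{k+1})\le m_{\mu^k}(x^{k+1};y^k)$, and convexity of $s$ at $y^k$; invoke the extra line-search condition $\langle -\nabla s(y^k)+g_{\mu^k},x^k-y^k\rangle\le 0$ to kill the cross term; expand $\|x^{k+1}-y^k\|^2$ via $y^k=x^k+\zeta(x^k-x^{k-1})$ and Young's inequality; and finally verify that $\zeta<\tfrac{1-\gamma}{2-\gamma}$ makes the coefficient $\tfrac{\zeta(2-\gamma)+\gamma-1}{2\gamma\eta_{\mu^k}}$ negative. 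One minor correction: the paper does not fix a single $\delta$ but lets $\delta_k=\tfrac{\zeta}{2\gamma\eta_{\mu^k}}$ vary with $k$, which sidesteps the need to invoke Lemma~\ref{lem:3.3} at this stage.
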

\begin{proof}
It holds that
\begin{align}
Q({x}^{k+1})&= r({x}^{k+1}) + s({x}^{k+1}) \nonumber \\
&{\leq} r({x}^{k+1}) + m_{\mu}({x}^{k+1}; {y}^{k}) \nonumber \\
&{\leq} r({x}^{k+1}) + m_{\gamma,\mu}({x}^{k+1}; {y}^k) \nonumber \\
&{\leq}r({x}^k)+m_{\gamma,\mu}({x}^k;{y}^k),
\label{eq:24}
\end{align}
where the first inequality follows from the backtracking rule, the second inequality holds by virtue of $0<\gamma < 1$, and the third inequality from the proximal operator.
\end{proof}

\begin{lemma}\label{lem:3.7}
Suppose that $\left\{x^k\right\}_{k\in\mathbb{N}}$ is a sequence generated by Algorithm \ref{Algorithm2}, then it holds that\\
(i) The sequence $\{H_{\delta_k}(x^k)\}$ is monotonically nonincreasing. In particular, for any $k\in\mathbb{N}$, it holds 

that
$$H_{\delta_{k+1}}(x^{k+1})-H_{\delta_k}(x^k)\leq\frac{\zeta(2-\gamma)+\gamma-1}{2\gamma\eta_{\mu^{k}}}\|x^{k+1}-x^{k}\|^{2}.$$\nonumber\\
(ii)~ $\lim_{k\to\infty}\left\|{x}^{k+1}-{x}^k\right\|^2\to 0.$

\end{lemma}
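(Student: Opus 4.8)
The plan is to prove Lemma \ref{lem:3.7} in two parts, using the auxiliary (Lyapunov-type) function $H_\delta(x) := Q(x) + \delta\|x - x_{\mathrm{prev}}\|^2$ that must have been introduced alongside Algorithm \ref{Algorithm2} (of the form $H_{\delta_k}(x^k) = Q(x^k) + \delta_k\|x^k - x^{k-1}\|^2$ for a suitable sequence $\delta_k > 0$). For part (i), I would start from the three-step descent estimate \eqref{eq:24}, namely $Q(x^{k+1}) \le r(x^k) + m_{\gamma,\mu}(x^k; y^k)$, and expand the right-hand side using the explicit quadratic form of $m_{\gamma,\mu}$ in \eqref{eq:18}. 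Writing $m_{\gamma,\mu}(x^k;y^k) = s(y^k) + \langle g_{\mu^k}, x^k - y^k\rangle + \frac{1}{2\gamma\eta_{\mu^k}}\|x^k - y^k\|^2$ and substituting $x^k - y^k = -\zeta(x^k - x^{k-1})$, the cross term becomes $-\zeta\langle g_{\mu^k}, x^k - x^{k-1}\rangle$ and the quadratic term becomes $\frac{\zeta^2}{2\gamma\eta_{\mu^k}}\|x^k - x^{k-1}\|^2$. The key point is that the improved line-search condition in Algorithm \ref{Algorithm2}, namely $\langle -\nabla s(y^k) + g_{\mu^k}, x^k - y^k\rangle \le 0$, together with the identity $x^k - y^k = -\zeta(x^k - x^{k-1})$, controls this cross term; this is precisely why the extra line-search inequality was imposed. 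I would use it to bound $\langle g_{\mu^k}, x^k - x^{k-1}\rangle$ in terms of $\langle \nabla s(y^k), x^k - x^{k-1}\rangle$, and then absorb the latter into a difference of $Q$ values via a descent-lemma / convexity-of-$s$ argument (or directly via the quadratic structure of $s$).

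The heart of part (i) is then a telescoping-style manipulation: I want to show
\begin{equation}
Q(x^{k+1}) + \delta_{k+1}\|x^{k+1} - x^k\|^2 \le Q(x^k) + \delta_k\|x^k - x^{k-1}\|^2 + \frac{\zeta(2-\gamma) + \gamma - 1}{2\gamma\eta_{\mu^k}}\|x^{k+1} - x^k\|^2. \nonumber
\end{equation}
To get there I would also invoke the majorization inequality $s(x^{k+1}) \le m_\mu(x^{k+1};y^k)$ and the relation $m_{\gamma,\mu}(x;y^k) = m_\mu(x;y^k)$ up to the $\gamma$ scaling, plus the fundamental proximal-optimality estimate: since $x^{k+1}$ minimizes $r(\cdot) + \frac{1}{2\gamma\eta_{\mu^k}}\|\cdot - (y^k + d_\gamma(\mu^k))\|^2$, comparing its value at $x^{k+1}$ and at $x^k$ yields a term $-\frac{1}{2\gamma\eta_{\mu^k}}\|x^{k+1} - x^k\|^2$. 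Collecting the $\|x^{k+1}-x^k\|^2$ coefficient ($-\frac{1}{2\gamma\eta_{\mu^k}}$ from the prox, $+\frac{1}{2\gamma\eta_{\mu^k}}$-type terms from completing the square, and whatever $\delta_k$ contributes), and the $\|x^k - x^{k-1}\|^2$ coefficient (the $\frac{\zeta^2}{2\gamma\eta_{\mu^k}}$ from extrapolation plus the cross-term contribution), one identifies the correct choice of $\delta_k$ (something like $\delta_k = \frac{\zeta}{2\gamma\eta_{\mu^k}}$ or a constant majorizing it, using boundedness of $\eta_{\mu^k}$ from Lemma \ref{lem:3.3}) so that the $\|x^k - x^{k-1}\|^2$ terms telescope and the net coefficient on $\|x^{k+1}-x^k\|^2$ is exactly $\frac{\zeta(2-\gamma)+\gamma-1}{2\gamma\eta_{\mu^k}}$. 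Monotonic nonincrease of $\{H_{\delta_k}(x^k)\}$ then follows because the input constraint $\zeta \in (0, \frac{1-\gamma}{2-\gamma})$ makes the numerator $\zeta(2-\gamma) + \gamma - 1 < 0$.

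For part (ii), I would sum the inequality from part (i) over $k = 0, 1, \dots, N$. The left side telescopes to $H_{\delta_{N+1}}(x^{N+1}) - H_{\delta_0}(x^0)$, which is bounded below because $Q$ is lower bounded by $h$ (assumption from the introduction) and the $\delta\|\cdot\|^2$ term is nonnegative. Hence $\sum_k \frac{-(\zeta(2-\gamma)+\gamma-1)}{2\gamma\eta_{\mu^k}}\|x^{k+1}-x^k\|^2 < \infty$. Since $\zeta(2-\gamma)+\gamma - 1$ is a fixed negative constant and $\eta_{\mu^k}$ is bounded above (Lemma \ref{lem:3.3}), the coefficient $\frac{-(\zeta(2-\gamma)+\gamma-1)}{2\gamma\eta_{\mu^k}}$ is bounded below by a positive constant, so the series $\sum_k \|x^{k+1}-x^k\|^2$ converges, which forces $\|x^{k+1}-x^k\|^2 \to 0$.

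The main obstacle I anticipate is part (i): correctly handling the extrapolation cross term $-\zeta\langle g_{\mu^k}, x^k - x^{k-1}\rangle$ and converting the improved line-search inequality into a usable bound, then bookkeeping all the $\|x^{k+1}-x^k\|^2$ and $\|x^k - x^{k-1}\|^2$ coefficients so that the $\delta_k$ sequence can be chosen to make the recursion telescope with exactly the stated coefficient. This requires the quadratic identity for $s$ (so that $s(x^k) - s(x^{k+1}) - \langle\nabla s(y^k), x^k - x^{k+1}\rangle$ can be expressed exactly via $\bm{\mathrm{M}}$) and careful use of the majorization/OM conditions; the role of the new line-search refinement in Algorithm \ref{Algorithm2} is exactly to neutralize the sign-indefinite cross term, so getting that step right is the crux. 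Parts like the telescoping sum in (ii) are then routine.
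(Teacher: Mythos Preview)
Your overall strategy matches the paper's: define $H_{\delta_k}(x^k) = Q(x^k) + \frac{\zeta}{2\gamma\eta_{\mu^k}}\|x^k - x^{k-1}\|^2$, combine the proximal optimality, the majorization condition, convexity of $s$, and the new line-search inequality, then telescope. Part (ii) is exactly as you describe.

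There is, however, one genuine misstep in your plan for part (i). You claim that ``since $x^{k+1}$ minimizes $r(\cdot) + \frac{1}{2\gamma\eta_{\mu^k}}\|\cdot - (y^k + d_\gamma(\mu^k))\|^2$, comparing its value at $x^{k+1}$ and at $x^k$ yields a term $-\frac{1}{2\gamma\eta_{\mu^k}}\|x^{k+1} - x^k\|^2$.'' That three-point inequality requires the prox objective to be strongly convex, which fails here because $r$ is nonconvex; only the \emph{weak} comparison $r(x^{k+1}) + m_{\gamma,\mu}(x^{k+1};y^k) \le r(x^k) + m_{\gamma,\mu}(x^k;y^k)$ is available. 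Your bookkeeping line ``$-\frac{1}{2\gamma\eta_{\mu^k}}$ from the prox'' is therefore based on the wrong mechanism.

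The paper does not obtain the negative $\|x^{k+1}-x^k\|^2$ coefficient from the prox. It bounds $r(x^{k+1})-r(x^k)$ by weak prox optimality (giving $\frac{1}{2\gamma\eta_{\mu^k}}[\|x^k-y^k\|^2 - \|x^{k+1}-y^k\|^2]$) and bounds $s(x^{k+1})-s(x^k)$ using the majorization $s(x^{k+1}) \le m_{\mu}(x^{k+1};y^k)$ (coefficient $\frac{1}{2\eta_{\mu^k}}$) together with convexity $s(x^k) \ge s(y^k) + \langle g, x^k - y^k\rangle$. Adding these yields
\[
Q(x^{k+1}) - Q(x^k) \le \langle g_{\mu^k} - g,\, x^k - y^k\rangle + \Bigl(\tfrac{1}{2\eta_{\mu^k}} - \tfrac{1}{2\gamma\eta_{\mu^k}}\Bigr)\|x^{k+1}-y^k\|^2 + \tfrac{1}{2\gamma\eta_{\mu^k}}\|x^k - y^k\|^2.
\]
The first term is $\le 0$ by the line-search condition (exactly as you anticipated). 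The crucial negative contribution is the middle term: its coefficient $\frac{1}{2\eta_{\mu^k}} - \frac{1}{2\gamma\eta_{\mu^k}}$ is the $\gamma$-gap between $m_\mu$ (used for majorization) and $m_{\gamma,\mu}$ (used in the prox). One then lower-bounds $\|x^{k+1}-y^k\|^2 \ge (1-\zeta)\|x^{k+1}-x^k\|^2 + \zeta(\zeta-1)\|x^k-x^{k-1}\|^2$ via Young's inequality (a lower bound suffices since the coefficient is negative), substitutes $\|x^k-y^k\|^2 = \zeta^2\|x^k-x^{k-1}\|^2$, and the coefficients then assemble into the stated constant after adding $\delta_{k+1}\|x^{k+1}-x^k\|^2 - \delta_k\|x^k-x^{k-1}\|^2$. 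You did mention the $m_\mu$/$m_{\gamma,\mu}$ relation in passing, but this Young-type expansion of $\|x^{k+1}-y^k\|^2$ is entirely absent from your outline and is the actual source of the $\|x^{k+1}-x^k\|^2$ term.
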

\begin{proof}
(i)~To simplify the notations in our analysis, we denote
\begin{align}
H_{\delta_k}(x^k)=Q(x^k)+\delta_k\|x^k -x^{k-1}\|^2\quad\mathrm{with}\quad\delta_k:=\frac{\zeta}{2\gamma\eta_{\mu^{k}}}.
\label{eq:25}
\end{align}
In the following, we show that the sequence is monotonically nonincreasing.
By following \eqref{eq:19}, the path search procedure finds a new update ${x}^{k+1}$ to make $m_{\mu^{k}}({x}^{k+1};{y}^{k})$ an upper bound of $s({x}^{k+1})$, thus we have
\begin{align}
&r({x}^k)+\left\langle{g}_{\mu^k},{x}^k-{y}^k\right\rangle+\frac{1}{2\gamma\eta_{\mu^k}}\left\|{x}^k-{y}^k\right\|^2 \nonumber \\
&\geq r({x}^{k+1})+\left\langle{g}_{\mu^k},{x}^{k+1}-{y}^k\right\rangle+\frac{1}{2\gamma\eta_{\mu^k}}\left\|{x}^{k+1}-{y}^k\right\|^2.
\label{eq:26}
\end{align}
From equation \eqref{eq:26}, we obtain
\begin{align}
r({x}^{k+1})-r({x}^k)&\leq
\left\langle{g}_{\mu^k},{x}^k-{x}^{k+1}\right\rangle
+\frac{1}{2\gamma\eta_{\mu^k}}[\left\|{x}^k-{y}^k\right\|^2-\left\|{x}^{k+1}-{y}^k\right\|^2].
\label{eq:27}
\end{align}
\begin{align}
&s({x}^{k+1})\leq s({y}^k)+\left\langle{g}_{\mu^k},{x}^{k+1}-{y}^k\right\rangle+\frac{1}{2\eta_{\mu^k}}\left\|{x}^{k+1}-{y}^k\right\|^2.
\label{eq:28}
\end{align}
Based on the convexity of the quadratic function
$s$, we obtain that
\begin{align}
s({x}^{k})\geq s({y}^k)+\left\langle{g},{x}^{k}-{y}^k\right\rangle.
\label{eq:29}
\end{align}
Using equations \eqref{eq:28} and \eqref{eq:29}, we derive
\begin{align}
s({x}^{k+1})-s({x}^k)\leq &\left\langle{-g},{x}^k-{y}^{k}\right\rangle+\left\langle{g}_{\mu^k},{x}^{k+1}-{y}^k\right\rangle 
+\frac{1}{2\eta_{\mu^k}}\left\|{x}^{k+1}-{y}^k\right\|^2.
\label{eq:30}
\end{align}
Combining \eqref{eq:27} and \eqref{eq:30}, we derive
\begin{align}
Q(x^{k+1})-Q(x^{k})&=r(x^{k+1})-r(x^{k})+s(x^{k+1})-s(x^{k})\nonumber\\
&\leq\left\langle{-g},{x}^{k}-{y}^k\right\rangle+\left\langle{g}_{\mu^k},{x}^{k}-{y}^k\right\rangle \nonumber\\
&+(\frac{1}{2\eta_{\mu^k}}-\frac{1}{2\gamma\eta_{\mu^k}})\left\|{x}^{k+1}-{y}^k\right\|^2
+\frac{1}{2\gamma\eta_{\mu^k}}\left\|{x}^{k}-{y}^k\right\|^2.
\label{eq:31}
\end{align}
Due to $\left\langle{-g+{g}_{\mu^k}},{x}^{k}-{y}^k\right\rangle\leq 0$ in Algorithm \ref{Algorithm2}, where $g=\nabla s\left({y}^{k}\right)$ and ${g}_{\mu^k}=\frac{\langle \nabla s\left({y}^{k}\right),{d}(\mu^{k})\rangle}{\|{d}(\mu^{k})\|^{2}}{d}(\mu^{k})$, its corresponding geometric interpretation is illustrated in Figure \ref{NEfig1}.

\begin{figure}[!th]
     \centering
    \includegraphics[width=0.64\textwidth]{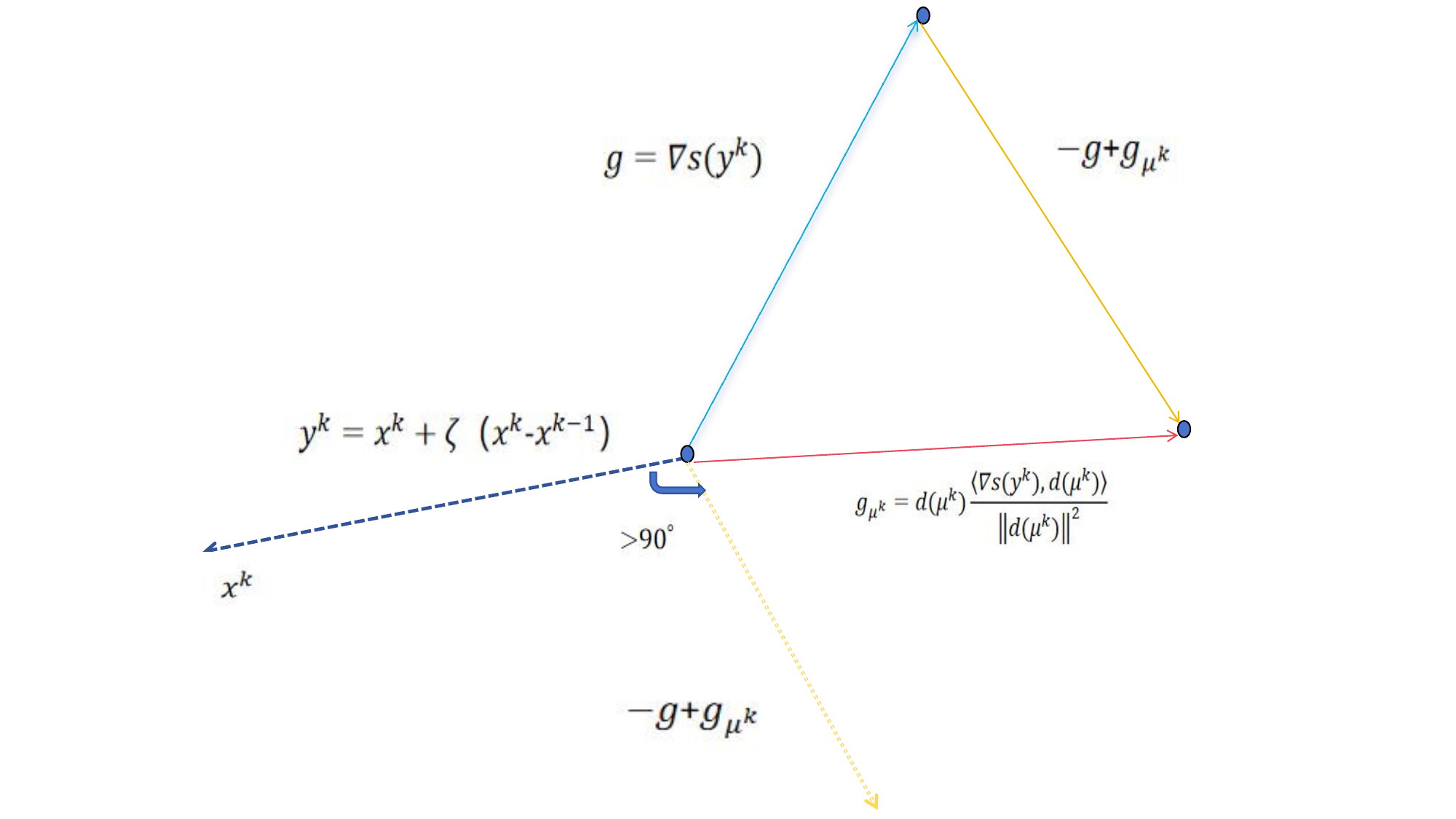}
    \caption{The geometric interpretation of the inequality $\left\langle-g+g_{\mu^k},x^k-y^k\right\rangle\leq0$.}
      \label{NEfig1}
\end{figure}
\vspace{-0.5cm}

\begin{align}
&Q(x^{k+1})-Q(x^{k}) \nonumber \\
\leq&(\frac{1}{2\eta_{\mu^k}}-\frac{1}{2\gamma\eta_{\mu^k}})\left\|{x}^{k+1}-{y}^k\right\|^2
+\frac{1}{2\gamma\eta_{\mu^k}}\left\|{x}^{k}-{y}^k\right\|^2 \nonumber \\
\leq&(\frac{1}{2\gamma\eta_{\mu^k}}-\frac{1}{2\eta_{\mu^k}})[(\zeta-1)\|x^{k+1}-x^{k}\|^{2}+\zeta(1-\zeta)\|x^{k}-x^{k-1}\|^{2}]
+\frac{\zeta^2}{2\gamma\eta_{\mu^k}}\left\|{x}^{k}-{x}^{k-1}\right\|^2 \nonumber\\
\leq&(\frac{1}{2\gamma\eta_{\mu^k}}-\frac{1}{2\eta_{\mu^k}})(\zeta-1)\|x^{k+1}-x^{k}\|^{2}
+[\zeta(1-\zeta)(\frac{1}{2\gamma\eta_{\mu^k}}-\frac{1}{2\eta_{\mu^k}})+\frac{\zeta^2}{2\gamma\eta_{\mu^k}}]\|x^{k}-x^{k-1}\|^{2}\nonumber \\
\leq&(\frac{1}{2\gamma\eta_{\mu^k}}-\frac{1}{2\eta_{\mu^k}})(\zeta-1)\|x^{k+1}-x^{k}\|^{2}
+[(\frac{\zeta}{2\gamma\eta_{\mu^k}}-\frac{\zeta}{2\eta_{\mu^k}})-\frac{\zeta^2}{2\gamma\eta_{\mu^k}}+\frac{\zeta^2}{2\eta_{\mu^k}}+\frac{\zeta^2}{2\gamma\eta_{\mu^k}}]\|x^{k}-x^{k-1}\|^{2}\nonumber \\
\leq&(\frac{1}{2\gamma\eta_{\mu^k}}-\frac{1}{2\eta_{\mu^k}})(\zeta-1)\|x^{k+1}-x^{k}\|^{2}+[(\frac{\zeta}{2\gamma\eta_{\mu^k}}-\frac{\zeta}{2\eta_{\mu^k}})+\frac{\zeta^2}{2\eta_{\mu^k}}]\|x^{k}-x^{k-1}\|^{2}\nonumber \\
\leq&(\frac{1}{2\gamma\eta_{\mu^k}}-\frac{1}{2\eta_{\mu^k}})(\zeta-1)\|x^{k+1}-x^{k}\|^{2}+\frac{\zeta}{2\gamma\eta_{\mu^k}}\|x^{k}-x^{k-1}\|^{2}.
\label{eq:33}
\end{align}
we know that
\begin{align}
\|x^{k+1}-{y}^k\|^2 &=\|x^{k+1}-x^{k}-\zeta(x^{k}-x^{k-1})\|^{2} \nonumber \\
&=\|x^{k+1}-x^k\|^2-2\zeta\langle x^{k+1}-x^k,x^k-x^{k-1}\rangle+\zeta^2\|x^k-x^{k-1}\|^2 \nonumber \\
& \geq(1-\zeta)\|x^{k+1}-x^{k}\|^{2}+\zeta(\zeta-1)\|x^{k}-x^{k-1}\|^{2},
\label{eq:34}
\end{align}
where the inequality follows from the fact that
\begin{align}
2\langle x^{k+1}-x^k,x^k-x^{k-1}\rangle\leq\|x^{k+1}-x^k\|^2+\|x^k-x^{k-1}\|^2.
\label{eq:35}
\end{align}
In the second inequality of equation \eqref{eq:33} is obtained by combining equations \eqref{eq:8}, \eqref{eq:34}, and \eqref{eq:35}. Based on the definition in equation \eqref{eq:25} and the result in equation \eqref{eq:33}, we can derive equation \eqref{eq:36}.
\begin{align}
H_{\delta_{k+1}}(x^{k+1})-H_{\delta_k}(x^k)
&=\left(Q(x^{k+1})+\frac{\zeta}{2\gamma\eta_{\mu^{k+1}}}\|\Delta_{k+1}\|^2\right)-\left(Q(x^k)+\frac{\zeta}{2\gamma\eta_{\mu^k}}\|\Delta_k\|^2\right)\nonumber \\
&\leq (\frac{\zeta-1}{2\gamma\eta_{\mu^k}}-\frac{\zeta-1}{2\eta_{\mu^k}}+\frac{\zeta}{2\gamma\eta_{\mu^{k+1}}})\|x^{k+1}-x^{k}\|^{2}\nonumber\\
&\leq \frac{\zeta(2-\gamma)+\gamma-1}{2\gamma\eta_{\mu^k}}\|x^{k+1}-x^{k}\|^{2}.
\label{eq:36}
\end{align}
Due to $0<\zeta<\frac{1-\gamma}{2-\gamma}$ in Algorithm \ref{Algorithm2},
then the sequence follows $H_{\delta_k}(x^{k})$  is monotonically nonincreasing.

(ii)~Then, summing up \eqref{eq:36} from $k=0,1,\ldots,N$  and $x^{-1}=x^0$, it yields
\begin{align}
 \sum_{k=0}^N(\frac{\zeta(\gamma-2)+1-\gamma}{2\gamma\eta_{\mu^k}})\|x^{k+1}-x^{k}\|^{2}&\leq\sum_{k=0}^N\left(H_{\delta_k}(x^k)-H_{\delta_{k+1}}(x^{k+1})\right) \nonumber\\
&=H_{\delta_0}(x^0)-H_{\delta_{N+1}}(x^{N+1}) \nonumber\\
&=Q(x^{0})-H_{\delta_{N+1}}(x^{N+1}) \nonumber\\
&\leq Q(x^{0})-Q<\infty.\nonumber\\
 \label{eq:37}
\end{align}
Given that $\{\eta_{\mu^k}\}_{k\in\mathbb{N}}$ is bounded, we derive that
\begin{align}
\lim_{k\to\infty}\left\|{x}^{k+1}-{x}^k\right\|^2\to0.
\label{eq:38}
\end{align}
Since $x^{k+1}$ is set to $v^{k+1}$ whenever $v^{k+1}$ yields a smaller objective value, the following condition must also be satisfied:
\begin{align}
\lim_{k\to\infty}\left\|{v}^{k+1}-{x}^k\right\|^2\to0.
\label{eq:39}
\end{align}
For the proof of this statement, refer to \cite{frankel2015splitting}. This concludes the proof.
\end{proof}

\begin{theorem}\label{thm:3.3} Suppose that $H_{\delta}$ is lower-bounded, $s$ is a quadratic function, and $r$ is a lower semicontinuous function. Let $\{x^k\}_{k \in N}$ be a sequence generated by the Algorithm \ref{Algorithm2} converging to $x^{\star}$. Then  $0 \in \partial H_{\delta}(x^{\star})$, i.e., $x^{\star}$ is a critical point.
\end{theorem}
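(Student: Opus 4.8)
The plan is to establish $0\in\partial Q(x^\star)$ by working with the safeguarding iterate $v^{k+1}=\operatorname{prox}_{\eta r}\!\big(y^k-\eta\nabla s(y^k)\big)$, which Algorithm \ref{Algorithm2} computes at every iteration, rather than with the dogleg update $x^{k+1}$. The reason is that the proximal optimality of $v^{k+1}$ supplies a subgradient of $Q$ at $v^{k+1}$ whose norm is controlled purely by $\|v^{k+1}-y^k\|$ and the Lipschitz continuity of $\nabla s$, whereas the subgradient of $Q$ at $x^{k+1}$ exhibited in \eqref{eq:21} carries the term $\nabla s(x^{k+1})-g_{\mu^k}$, and $g_{\mu^k}$ --- the projection of $\nabla s(y^k)$ onto the possibly oscillating direction $d(\mu^k)$ --- is not guaranteed to converge to $\nabla s(x^\star)$ when $\nabla s(x^\star)\neq 0$. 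Since \eqref{eq:39} in Lemma \ref{lem:3.7} gives $\|v^{k+1}-x^k\|\to 0$, this route is always available.

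First I would collect the auxiliary convergences: $x^k\to x^\star$ together with $\|x^k-x^{k-1}\|\to 0$ (Lemma \ref{lem:3.7}(ii)) yields $x^{k-1}\to x^\star$, $x^{k+1}\to x^\star$, $y^k=x^k+\zeta(x^k-x^{k-1})\to x^\star$, and $v^{k+1}\to x^\star$ via \eqref{eq:39}; moreover $x^\star\in\operatorname{dom}Q$ (hence $r(x^\star)<\infty$) by lower semicontinuity of $Q$ and the boundedness of $\{Q(x^k)\}$ from Lemma \ref{lem:3.7}(i). Next, the optimality condition for the proximal step defining $v^{k+1}$ reads $\tfrac1\eta\big(y^k-\eta\nabla s(y^k)-v^{k+1}\big)\in\hat{\partial} r(v^{k+1})$, so that
\[
\xi^{k+1}:=\nabla s(v^{k+1})-\nabla s(y^k)+\tfrac1\eta\big(y^k-v^{k+1}\big)\ \in\ \partial Q(v^{k+1}).
\]
Because $\nabla s$ is affine with Lipschitz constant $L_s$, one has $\|\xi^{k+1}\|\le\big(L_s+\tfrac1\eta\big)\|v^{k+1}-y^k\|\to 0$.

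The crucial step is to upgrade this to the full continuity $Q(v^{k+1})\to Q(x^\star)$, which is what lets one invoke closedness of the limiting subdifferential. Lower semicontinuity of $r$ gives $\liminf_k r(v^{k+1})\ge r(x^\star)$. For the reverse inequality I would use that $v^{k+1}$ minimizes $r(\cdot)+\tfrac1{2\eta}\|\cdot-u^k\|^2$, with $u^k:=y^k-\eta\nabla s(y^k)$, and test this against the competitor $x^\star$:
\[
r(v^{k+1})\ \le\ r(x^\star)+\tfrac1{2\eta}\|x^\star-u^k\|^2-\tfrac1{2\eta}\|v^{k+1}-u^k\|^2 .
\]
Since $u^k\to x^\star-\eta\nabla s(x^\star)$ and $v^{k+1}\to x^\star$, the right-hand side tends to $r(x^\star)$, hence $\limsup_k r(v^{k+1})\le r(x^\star)$ and therefore $r(v^{k+1})\to r(x^\star)$; as $s$ is continuous, $Q(v^{k+1})=s(v^{k+1})+r(v^{k+1})\to Q(x^\star)$.

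Combining $v^{k+1}\to x^\star$, $Q(v^{k+1})\to Q(x^\star)$, $\xi^{k+1}\in\partial Q(v^{k+1})$ and $\xi^{k+1}\to 0$ with the definition \eqref{eq:4} of the limiting subdifferential yields $0\in\partial Q(x^\star)$. To transfer this to $H_\delta$, observe that the coupling term $\delta\|x-x'\|^2$ is continuously differentiable with gradient $\big(2\delta(x-x'),-2\delta(x-x')\big)$, which vanishes on the diagonal $x=x'$; hence at $(x^\star,x^\star)$ one has $\partial H_\delta(x^\star,x^\star)=\partial Q(x^\star)\times\{0\}$, so $0\in\partial H_\delta(x^\star)$ and $x^\star$ is a critical point. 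I expect the $\limsup$ estimate on $r(v^{k+1})$ to be the only delicate point --- it is precisely what converts mere lower semicontinuity of $r$ into the convergence $Q(v^{k+1})\to Q(x^\star)$ required to close $\partial Q$ --- while the remaining steps are Lipschitz/continuity bookkeeping; the key design choice is to base the argument on the safeguarding iterate $v^{k+1}$ rather than on $x^{k+1}$.
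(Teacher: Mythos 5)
Your proposal is correct and follows essentially the route the paper intends: the paper's proof is a one-line appeal to Lemma~\ref{lem:3.3}, Lemma~\ref{lem:3.7}(ii) and the methodology of \cite[Theorem~1]{li2015accelerated}, and that methodology is exactly your argument --- extract a vanishing subgradient from the optimality condition of the safeguarding step $v^{k+1}$, upgrade lower semicontinuity of $r$ to $Q(v^{k+1})\to Q(x^{\star})$ by testing the prox subproblem against the competitor $x^{\star}$, and conclude via the closedness built into the definition \eqref{eq:4} of the limiting subdifferential. Your remark that the dogleg iterate $x^{k+1}$ is the wrong vehicle because $g_{\mu^k}$ (a projection onto the possibly non-convergent direction $d(\mu^k)$) need not tend to $\nabla s(x^{\star})$ is a correct and useful clarification of why the monitor step must carry the argument, a point the paper leaves implicit.
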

\begin{proof} 
The theorem can be proved by combining Lemma \ref{lem:3.3} (which establishes step-size boundedness) with part (ii) of Lemma \ref{lem:3.7}, following the methodology of \cite[Theorem 1]{li2015accelerated}.
\end{proof}
We now analyze the global convergence rate of the PDOME using the KL property. Initially, we show that $g_{\mu^k}$ converges to the gradient direction of $s$ as $k \rightarrow \infty$.
\begin{lemma}\label{lem:8} Let $e^k={g}_{\mu^k}-\nabla s({y}^k)$. The sequence $\left\{\left\|e^k\right\|\right\}_{k\in\mathbb{N}}$ converges to 0 as $k\to\infty$ and we have
\begin{align}
\|\partial H_{\delta_{k+1}}(x^{k+1})\|
\leq(L_{s}+\frac{1+\zeta}{\gamma\eta_{\mu^k}}\left)\|{x}^{k+1}-{x}^k\right\|
+(\zeta L_{s}+\frac{\zeta}{\gamma\eta_{\mu^k}})\left\|{x}^{k}-{x}^{k-1}\right\|+\left\|{e}^k\right\|.
\label{eq:40}
\end{align}
\end{lemma}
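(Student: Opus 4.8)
The plan is to split the statement into two parts: first showing $\|e^k\| \to 0$, then deriving the subgradient bound \eqref{eq:40}.

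For the convergence $\|e^k\| \to 0$, I would use the definition $g_{\mu^k} = \frac{\langle \nabla s(y^k), d(\mu^k)\rangle}{\|d(\mu^k)\|^2} d(\mu^k)$, which is simply the orthogonal projection of $\nabla s(y^k)$ onto the line spanned by $d(\mu^k)$. Hence $e^k = g_{\mu^k} - \nabla s(y^k)$ is the component of $-\nabla s(y^k)$ orthogonal to $d(\mu^k)$, and $\|e^k\|$ measures how far $d(\mu^k)$ is from being aligned with $-\nabla s(y^k)$. Now recall that Algorithm \ref{Algorithm2} enforces the additional line-search condition $\langle -\nabla s(y^k) + g_{\mu^k}, x^k - y^k\rangle \le 0$, i.e. $\langle e^k, x^k - y^k\rangle \le 0$; combined with $y^k - x^k = \zeta(x^k - x^{k-1})$ and part (ii) of Lemma \ref{lem:3.7}, the ``anchor'' displacement $y^k - x^k$ vanishes. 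The key structural fact is that when $\mu^k = 1$ (the smallest admissible value) one has $d(\mu^k) = d_\eta = -\eta \nabla s(y^k)$, in which case $g_{\mu^k} = \nabla s(y^k)$ exactly and $e^k = 0$. For $\mu^k > 1$, the deviation from the gradient direction is controlled; I would argue that the backtracking forces $\mu^k \to 1$ (equivalently, the Newton component's weight $\mu^k - 1 \to 0$) along the sequence — this is the delicate point. Concretely, using \eqref{eq:10} write $d(\mu^k) = -\eta\nabla s(y^k) + (\mu^k-1)(d_N - d_\eta)$, so $e^k$ can be expressed in terms of $(\mu^k - 1)$ and the bounded quantities $\nabla s(y^k)$, $d_N - d_\eta$; showing $(\mu^k-1)\|\nabla s(y^k)\| \to 0$ or directly that the misalignment shrinks gives the claim. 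The main obstacle is making this rigorous without an a priori guarantee that $\mu^k \to 1$: one must instead bound $\|e^k\|$ by a multiple of $\|x^{k+1} - y^k\|$ or $\|x^{k+1}-x^k\|$ using the interplay between the majorization test $m_{\mu^k}(x^{k+1};y^k) \ge s(x^{k+1})$ and the geometry, and then invoke Lemma \ref{lem:3.7}(ii).

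For the subgradient bound \eqref{eq:40}, I would start from the inclusion \eqref{eq:21}, which already exhibits an element of $\partial Q(x^{k+1})$:
\[
\xi^{k+1} := \bigl(\nabla s(x^{k+1}) - g_{\mu^k}\bigr) - \tfrac{1}{\gamma\eta_{\mu^k}}(x^{k+1}-x^k) + \tfrac{\zeta}{\gamma\eta_{\mu^k}}(x^k - x^{k-1}) \in \partial Q(x^{k+1}).
\]
Since $H_{\delta_{k+1}}(x) = Q(x) + \delta_{k+1}\|x - x^k\|^2$ with $\delta_{k+1} = \frac{\zeta}{2\gamma\eta_{\mu^{k+1}}}$, we have $\partial H_{\delta_{k+1}}(x^{k+1}) = \partial Q(x^{k+1}) + 2\delta_{k+1}(x^{k+1}-x^k)$, so $\xi^{k+1} + 2\delta_{k+1}(x^{k+1}-x^k) \in \partial H_{\delta_{k+1}}(x^{k+1})$. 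Now I rewrite $\nabla s(x^{k+1}) - g_{\mu^k} = \bigl(\nabla s(x^{k+1}) - \nabla s(y^k)\bigr) + \bigl(\nabla s(y^k) - g_{\mu^k}\bigr) = \bigl(\nabla s(x^{k+1}) - \nabla s(y^k)\bigr) - e^k$, and bound $\|\nabla s(x^{k+1}) - \nabla s(y^k)\| \le L_s \|x^{k+1} - y^k\| \le L_s(\|x^{k+1}-x^k\| + \zeta\|x^k - x^{k-1}\|)$ by Lipschitz continuity of $\nabla s$ (with constant $L_s$) and $y^k = x^k + \zeta(x^k - x^{k-1})$. Applying the triangle inequality to the full expression for $\xi^{k+1} + 2\delta_{k+1}(x^{k+1}-x^k)$ and collecting the coefficients of $\|x^{k+1}-x^k\|$, $\|x^k - x^{k-1}\|$, and $\|e^k\|$ yields exactly \eqref{eq:40} — modulo absorbing the $2\delta_{k+1}$ term, which requires bounding $\frac{1}{\eta_{\mu^{k+1}}}$ in terms of $\frac{1}{\eta_{\mu^k}}$; this is harmless since $\{\eta_{\mu^k}\}$ is bounded below and above (Lemma \ref{lem:3.3} and the monotonicity Lemma \ref{lem:3.2}), or one simply states the bound with the appropriate constant. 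The routine part is the bookkeeping of constants; the only substantive ingredient is the Lipschitz estimate plus the decomposition through $\nabla s(y^k)$.
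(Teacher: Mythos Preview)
Your derivation of the subgradient bound \eqref{eq:40} is essentially the paper's argument: both start from the proximal optimality condition \eqref{eq:20}/\eqref{eq:21}, insert $\nabla s(y^k)$ to split $\nabla s(x^{k+1})-g_{\mu^k}$ into a Lipschitz-controlled part and $-e^k$, and collect terms via the triangle inequality. One cosmetic difference: the paper exploits that $\nabla s$ is affine (since $s$ is quadratic) to write $\nabla s(y^k)=(1+\zeta)\nabla s(x^k)-\zeta\nabla s(x^{k-1})$ directly, whereas you use the generic Lipschitz estimate $\|\nabla s(x^{k+1})-\nabla s(y^k)\|\le L_s\|x^{k+1}-y^k\|$ followed by $\|x^{k+1}-y^k\|\le\|x^{k+1}-x^k\|+\zeta\|x^k-x^{k-1}\|$. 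Both routes give the same constants.

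Where you diverge from the paper is the claim $\|e^k\|\to 0$. The paper does \emph{not} attempt the projection/geometric analysis you outline; it reverses the order, first establishing the inequality \eqref{eq:41} and only afterwards asserting that $\|e^k\|\to 0$ follows from $\|x^{k+1}-x^k\|\to 0$ and $\|x^k-x^{k-1}\|\to 0$ (Lemma~\ref{lem:3.7}(ii)/Theorem~\ref{thm:3.3}). No argument controlling $\mu^k-1$ or the misalignment of $d(\mu^k)$ with $-\nabla s(y^k)$ appears. Your instinct that this is the ``delicate point'' is sound: the paper's one-line conclusion does not obviously follow from \eqref{eq:41} alone, since \eqref{eq:41} is an \emph{upper} bound on the subgradient norm and says nothing about $\|e^k\|$ being small. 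So your direct approach is more honest about where the difficulty lies, even if you do not close the gap either; the paper simply does not engage with the obstacle you identify.
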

 
\begin{proof}
By analyzing the optimality condition of \eqref{eq:19}, we obtain that\\
$$\left\|{g}_{\mu^k}+\frac{1}{\gamma\eta_{\mu^k}}\left({x}^{k+1}-{y}^k\right)-\nabla s({x}^{k+1})\right\|\in\left\|\partial Q({x}^{k+1})\right\|.
$$
By triangle inequality and smoothness of $\nabla s$, we have
\begin{align*}
\left\|\partial Q({x}^{k+1})\right\|&\leq\left\|{g}_{\mu^k}-\nabla s({x}^{k+1})\right\|+\frac{1}{\gamma\eta_{\mu^k}}\left\|{x}^{k+1}-{y}^k\right\| & \\
  & \leq\left\|\nabla s({y}^k)-\nabla s({x}^{k+1})\right\|+\frac{1}{\gamma\eta_{\mu^k}}\left\|{x}^{k+1}-{y}^k\right\|+\left\|e^k\right\| &\\
  & \leq\left\|(1+\zeta)\nabla s({x}^k)-\zeta\nabla s({x}^{k-1})-\nabla s({x}^{k+1})\right\|+\frac{1}{\gamma\eta_{\mu^k}}\left\|{x}^{k+1}-{y}^k\right\|+\left\|e^k\right\| &\\
  & \leq\left\|\nabla s({x}^k)-\nabla s({x}^{k+1})+\zeta\nabla s({x}^{k})-\zeta\nabla s({x}^{k-1})\right\|+\frac{1}{\gamma\eta_{\mu^k}}\left\|{x}^{k+1}-{y}^k\right\|+\left\|e^k\right\| &\\
  & \leq\left\|\nabla s({x}^k)-\nabla s({x}^{k+1})\|+\zeta\|\nabla s({x}^{k})-\nabla s({x}^{k-1})\right\|&\\
  &+\frac{1}{\gamma\eta_{\mu^k}}\left\|{x}^{k+1}-{x}^k\right\|+\frac{\zeta}{\gamma\eta_{\mu^k}}\left\|{x}^{k}-{x}^{k-1}\right\|+\left\|e^k\right\|&\\
  &\leq(L_{s}+\frac{1}{\gamma\eta_{\mu^k}}\left)\|{x}^{k+1}-{x}^k\right\|+(\zeta L_{s}+\frac{\zeta}{\gamma\eta_{\mu^k}})\left\|{x}^{k}-{x}^{k-1}\right\|+\left\|{e}^k\right\|.&
  \end{align*}
Then, we have
\begin{align}
\|\partial H_{\delta_{k+1}}(x^{k+1})\|&=\|\partial[Q(x^{k+1})+\frac{\zeta}{2\gamma\eta_{\mu^{k+1}}}\left\|x^{k+1}-x^{k}\right\|^2]\|\nonumber\\
&\leq(L_{s}+\frac{1+\zeta}{\gamma\eta_{\mu^k}}\left)\|{x}^{k+1}-{x}^k\right\|+(\zeta L_{s}+\frac{\zeta}{\gamma\eta_{\mu^k}})\left\|{x}^{k}-{x}^{k-1}\right\|+\left\|{e}^k\right\|.&
\label{eq:41}
\end{align}
Based on $\lim_{k\to\infty}\left\|x^{k+1}-x^k\right\|^2\to0$ and $\lim_{k\to\infty}\left\|x^{k}-x^{k-1}\right\|^2\to0$ in Theorem \ref{thm:3.3}, we have
$\lim_{k\to\infty}\left\|e^k\right\|\to0.$
\end{proof}
\begin{theorem}\label{thm:3.4} Suppose that $H_{\delta}$ satisfies the KL property on $\omega(x^k)$ which is the cluster point set of $\{x^k\}_{k\in\mathbb{N}}$, then the sequence $\{x^k\}_{k\in\mathbb{N}}$ generated by Algorithm \ref{Algorithm2} has summable residuals, $\sum_{k=0}^{\infty}\|x^{k+1}-x^k\|<\infty$.
\end{theorem}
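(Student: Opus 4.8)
The plan is to follow the standard Kurdyka--Łojasiewicz (KL) finite-length argument (as in Attouch--Bolte--Svaiter and \cite{bolte2014proximal}), adapted to the fact that the surrogate gradient $g_{\mu^k}$ only approximates $\nabla s(y^k)$, so the usual relative-error estimate carries the extra term $\|e^k\|$ coming from Lemma~\ref{lem:8}.

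First I would fix the limiting objects. By Lemma~\ref{lem:3.7}(i) the sequence $\{H_{\delta_k}(x^k)\}$ is nonincreasing, and since $H_\delta$ is lower bounded while $\delta_k\|x^k-x^{k-1}\|^2\ge 0$ with $\|x^k-x^{k-1}\|\to 0$ (Lemma~\ref{lem:3.7}(ii)), it converges to some value $H^\star$. Assuming $\{x^k\}$ bounded, $\omega(x^k)$ is nonempty and compact; using continuity of $s$, lower semicontinuity of $r$, $\|x^{k+1}-x^k\|\to 0$, and boundedness of $\{\eta_{\mu^k}\}$ (Lemma~\ref{lem:3.3}), one checks that $H_\delta\equiv H^\star$ on $\omega(x^k)$ and (as in Theorem~\ref{thm:3.3}) every point of $\omega(x^k)$ is critical. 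If $H_{\delta_{k_0}}(x^{k_0})=H^\star$ for some $k_0$, then Lemma~\ref{lem:3.7}(i) together with $\zeta<\tfrac{1-\gamma}{2-\gamma}$ (hence $\zeta(2-\gamma)+\gamma-1<0$) forces $x^k$ to be eventually constant and the claim is trivial; so assume $H_{\delta_k}(x^k)>H^\star$ for all $k$.

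Next I would apply the uniformized KL property (Lemma~\ref{lem:2.6}) on $\mathbb{W}=\omega(x^k)$ to get a concave desingularizing $\phi$ with $\phi'>0$ such that, for all large $k$, $\phi'(H_{\delta_k}(x^k)-H^\star)\,\mathrm{dist}(0,\partial H_{\delta_k}(x^k))\ge 1$. Combining this with the sufficient decrease $H_{\delta_k}(x^k)-H_{\delta_{k+1}}(x^{k+1})\ge a\,\|x^{k+1}-x^k\|^2$ (where $a>0$ exists because $\zeta(2-\gamma)+\gamma-1<0$ and $\eta_{\mu^k}$ is bounded above, Lemma~\ref{lem:3.7}(i)) and the relative-error bound from Lemma~\ref{lem:8}, namely $\mathrm{dist}(0,\partial H_{\delta_k}(x^k))\le b(r_k+r_{k-1})+\|e^{k-1}\|$ with $r_j:=\|x^j-x^{j-1}\|$ (using also that $1/\eta_{\mu^k}$ is bounded), the concavity of $\phi$ gives, writing $\phi_k:=\phi(H_{\delta_k}(x^k)-H^\star)$,
\[
\phi_k-\phi_{k+1}\ \ge\ \frac{a\,r_{k+1}^2}{\,b(r_k+r_{k-1})+\|e^{k-1}\|\,}.
\]
Taking square roots after a Young inequality $\sqrt{uv}\le\varepsilon u+\tfrac{1}{4\varepsilon}v$, then summing over $k$ so that the $\phi_k$ telescope and the $(r_k+r_{k-1})$ contributions get absorbed into the left side, yields $\sum_k r_{k+1}\le C_1+C_2\sum_k\|e^{k-1}\|$.

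The remaining, and main, obstacle is to show $\sum_k\|e^k\|<\infty$ — Lemma~\ref{lem:8} only provides $\|e^k\|\to 0$. I would establish the stronger pointwise estimate $\|e^k\|\le c_1\|x^{k+1}-x^k\|+c_2\|x^k-x^{k-1}\|$. When the accepted update is the proximal-gradient point $v^{k+1}$, its optimality condition involves $\nabla s(y^k)$ rather than $g_{\mu^k}$, so the error term is simply absent; when it is the dogleg point, one exploits that $g_{\mu^k}$ is exactly the orthogonal projection of $\nabla s(y^k)$ onto $\mathrm{span}\,d(\mu^k)$ (so $e^k\perp d(\mu^k)$ and $d(\mu^k)=-\eta_{\mu^k}g_{\mu^k}$), the line-search inequality $\langle e^k,x^k-y^k\rangle\le 0$ imposed in Algorithm~\ref{Algorithm2}, and the quadratic structure of $s$, to control the angular deviation of $d(\mu^k)$ from $\nabla s(y^k)$ by $\|x^{k+1}-y^k\|$-type quantities, hence by $r_{k+1}+r_k$. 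Feeding this back and choosing the Young parameter $\varepsilon$ small enough that $C_2\sum_k\|e^{k-1}\|$ is absorbed into $\sum_k r_k$ on the left side, we conclude $\sum_{k=0}^{\infty}\|x^{k+1}-x^k\|<\infty$.
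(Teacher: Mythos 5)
Your architecture is the same one the paper invokes: its proof of Theorem \ref{thm:3.4} is a one-line appeal to the Attouch--Bolte--Svaiter finite-length scheme via the descent property \eqref{eq:36} and the subgradient bound \eqref{eq:40}, and your steps (convergence of $H_{\delta_k}(x^k)$, uniformized KL on $\omega(x^k)$, concavity of the desingularizer combined with sufficient decrease and relative error, Young's inequality plus telescoping) are precisely the expanded version of that argument, modulo the unstated boundedness of $\{x^k\}$ that both you and the paper need for $\omega(x^k)$ to be nonempty and compact. You have also correctly located the point the paper glosses over: Lemma \ref{lem:8} delivers the relative-error estimate only up to the additive term $\|e^k\|$, and the machinery requires $\sum_k\|e^k\|<\infty$ (or a pointwise bound by the residuals), whereas the lemma only gives $\|e^k\|\to 0$. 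The paper effectively sidesteps this by postulating summability of the errors in $\mathbf{H}_3$(iv) and even taking $\epsilon_k\equiv 0$ in Theorem \ref{thm:3.5}.

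The genuine gap in your proposal is that your repair --- the estimate $\|e^k\|\le c_1\|x^{k+1}-x^k\|+c_2\|x^k-x^{k-1}\|$ --- is asserted rather than proved, and the sketch you give does not support it. Since $g_{\mu^k}$ is the orthogonal projection of $g=\nabla s(y^k)$ onto $\mathrm{span}\,d(\mu^k)$, one has $\|e^k\|=\|g\|\,|\sin\alpha_k|$, where $\alpha_k$ is the angle between $g$ and $d(\mu^k)=-\bigl[\eta I+(\mu^k-1)(\bm{\mathrm{M}}^{-1}-\eta I)\bigr]g$. That angle is governed by $\mu^k$ and by how $g$ decomposes along the eigenvectors of $\bm{\mathrm{M}}$; it has no a priori relation to the step lengths $\|x^{k+1}-x^k\|$ and $\|x^k-x^{k-1}\|$. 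Moreover, at a critical point of $Q$ one only has $0\in\nabla s(x^\star)+\partial r(x^\star)$, so $\|g\|$ need not vanish, and the added line-search condition $\langle -\nabla s(y^k)+g_{\mu^k},\,x^k-y^k\rangle\le 0$ constrains the sign of a single inner product, not the magnitude of $e^k$. Unless you can additionally show that the backtracking forces $\mu^k\to 1$ fast enough (the error vanishes only in the limit $\mu^k\downarrow 1$, and the algorithm deliberately selects the largest admissible $\mu^k$), or you impose $(\|e^k\|)_k\in\ell^1$ as an explicit hypothesis as the paper does, the absorption step at the end of your argument does not close and the summability of the residuals is not established.
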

\begin{proof}
Following the same procedure as in \cite[Theorem 2.9]{attouch2013convergence}, and considering the descent property in \eqref{eq:36} together with the property from \eqref{eq:40} that $g_{\mu^k}$ converges to the gradient direction of $s$ as $k \rightarrow \infty$. one can easily show that the sequence
$\left\{{x}^k\right\}_{k\in\mathbb{N}}$ has a finite length.
\end{proof}
Whenever the KL property is invoked, we shall adopt the results given in Theorem \ref{thm:3.4}. We consider a sequence 
$\{x^k\}_{k\in\mathbb{N}}$ in Algorithm \ref{Algorithm2}, computed by means of an abstract algorithm satisfying the following inequality:\\
$\mathbf{H}_1$ (Sufficient decrease): From equation \eqref{eq:36}, for each $k\in\mathbb{N}$, where $\frac{\zeta(\gamma-2)+1-\gamma}{2\gamma\eta_{\mu^k}}>0,$ $$H_{\delta_{k+1}}(x^{k+1})+(\frac{\zeta(\gamma-2)+1-\gamma}{2\gamma\eta_{\mu^k}})\|x^{k+1}-x^k\|^2\leq H_{\delta_{k}}(x^k).$$
$\mathbf{H}_2$ (Relative error): For each $k\in\mathbb{N}$, where $\frac{\gamma \eta_{u^{k}}}{(\Re \zeta + 1)(L_{s}\cdot\gamma\cdot\eta_{u^{k}} + 1) + \zeta}>0$ and $\varepsilon_{k+1}=\frac{\gamma \eta_{u^{k}} \|e^{k}\|}{(\Re \zeta + 1)(L_{s} \cdot\gamma \cdot\eta_{u^{k}} + 1) + \zeta}\geq0$,
$$\frac{\gamma \eta_{u^{k}}}{(\Re \zeta + 1)(L_{s}\cdot\gamma\cdot\eta_{u^{k}} + 1) + \zeta} \|\partial H_{\delta_{k+1}}(x^{k+1})\| \leq \|x^{k+1} - x^{k}\| + \frac{\gamma \eta_{u^{k}} \|e^{k}\|}{(\Re \zeta + 1)(L_{s} \cdot\gamma\cdot \eta_{u^{k}} + 1) + \zeta}.$$
Let us assume that $\|x^{k} - x^{k-1}\|\leq \Re\|x^{k+1} - x^{k}\|$ holds for some $\Re>0$.
In conjunction with formula \eqref{eq:41}, we arrive at $\mathbf{H}_2$.\\
$\mathbf{H}_3$ (Parameters): The sequences $(\frac{\zeta(\gamma-2)+1-\gamma}{2\gamma\eta_{\mu^k}})_{k\in\mathbb{N}}$, $(\frac{\gamma \eta_{u^{k-1}}}{(\Re \zeta + 1)(L_{s}\cdot\gamma\cdot\eta_{u^{k-1}} + 1) + \zeta})_{k\in\mathbb{N}}$ and $(\varepsilon_k)_{k\in\mathbb{N}}$ satisfy\\
(i) $\frac{\zeta(\gamma-2)+1-\gamma}{2\gamma\eta_{\mu^k}}\ge\underline{a}>0$ for all $k\ge0$;\\
(ii) $(\frac{\gamma \eta_{u^{k-1}}}{(\Re \zeta + 1)(L_{s}\cdot\gamma\cdot\eta_{u^{k-1}} + 1) + \zeta})_{k\in\mathbb{N}}\notin l^1$;\\
(iii)$\sup_{k\in\mathbb{N}^\star}\frac{2\eta_{{u}^k}[(\Re\zeta+1)(Ls\cdot\gamma\cdot\eta_{{u}^{k-1}}+1)+\zeta]}{[\zeta(\gamma-2)+(1-\gamma)]\eta_{{u}^{k-1}}}<+\infty$;\\
(iv) $(\varepsilon_k)_{k\in\mathbb{N}}\in l^1$.
\begin{theorem}\label{thm:3.5} Let $H_{\delta}$ have the KL property at a global minimum $x^{\star}$ of $H$. Let $\left\{{x}^k\right\}_{k\in\mathbb{N}}$ be a sequence satisfying $\mathbf{H} _{1}, \mathbf{H} _{2}$ and $\mathbf{H} _{3}$ with $\epsilon _{k}\equiv 0.$ There,  exist $\rho>0$ such that if $x^0\in\mathcal{B}_\rho(\hat{x})$, then $\left\{{x}^k\right\}_{k\in\mathbb{N}}$ has finite length and converges to a global minimum $x^{\star}$  of $H_{\delta}$.
\end{theorem}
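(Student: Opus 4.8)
The plan is to run the standard Kurdyka--{\L}ojasiewicz (KL) descent argument, in the variable-parameter form of \cite{attouch2013convergence,frankel2015splitting}, on the Lyapunov sequence $\{H_{\delta_k}(x^k)\}$, using that $\mathbf{H}_1$, $\mathbf{H}_2$ and $\mathbf{H}_3$ have already been established (with $\varepsilon_k\equiv0$, since $\epsilon_k\equiv0$ forces $e^k\equiv0$ in Lemma~\ref{lem:8}). First I would normalize so that $H_\delta(x^\star)=0$. Because $x^\star$ is a \emph{global} minimizer of $H$ we have $H_{\delta_k}(x^k)\ge H(x^k)\ge H(x^\star)=0$, and $\mathbf{H}_1$ makes $k\mapsto H_{\delta_k}(x^k)$ nonincreasing, so it converges to some $\ell\ge0$. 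Taking the KL data $(\rho_0,\eta,\psi)$ of $H_\delta$ at $x^\star$, I would choose $\rho\in(0,\rho_0)$ small enough that any start $x^0\in\mathcal B_\rho(x^\star)$ has $H_{\delta_0}(x^0)<\eta$ (this step genuinely uses global minimality of $x^\star$). If $H_{\delta_{k_0}}(x^{k_0})=\ell$ at some finite $k_0$, then $\mathbf{H}_1$ forces $x^k$ to be eventually constant and the finite-length claim is immediate; otherwise we may assume $r_k:=H_{\delta_k}(x^k)\in(0,\eta)$ for all $k$ and apply the KL inequality \eqref{eq:5} at every iterate lying in $\mathcal B_\rho(x^\star)$.

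The heart of the argument is a recursive bound on $\Delta_k:=\|x^k-x^{k-1}\|$. Concavity of $\psi$ together with the sufficient decrease $\mathbf{H}_1$ gives $\psi(r_k)-\psi(r_{k+1})\ge\psi'(r_k)(r_k-r_{k+1})\ge\psi'(r_k)\,\underline a_k\,\Delta_{k+1}^2$, where $\underline a_k=\frac{\zeta(\gamma-2)+1-\gamma}{2\gamma\eta_{\mu^k}}$; the KL inequality at $x^k$ combined with $\mathbf{H}_2$ shifted to index $k$ (which bounds $\operatorname{dist}(0,\partial H_{\delta_k}(x^k))\le b_{k-1}^{-1}\Delta_k$) gives $\psi'(r_k)\ge b_{k-1}/\Delta_k$. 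Hence
\begin{equation*}
\Delta_{k+1}^2\le\frac{1}{\underline a_k\,b_{k-1}}\,\Delta_k\bigl(\psi(r_k)-\psi(r_{k+1})\bigr),
\end{equation*}
and the arithmetic--geometric mean inequality turns this into $\Delta_{k+1}\le\tfrac12\Delta_k+\tfrac{1}{2\underline a_k b_{k-1}}\bigl(\psi(r_k)-\psi(r_{k+1})\bigr)$. Condition $\mathbf{H}_3$(iii) is exactly the statement $\sup_k(\underline a_k b_{k-1})^{-1}<\infty$, so summing over $k$ the $\psi$-terms telescope, the $\tfrac12\Delta_k$ terms are absorbed, and one gets $\sum_k\Delta_k\le 2\Delta_1+M\psi(r_1)<\infty$. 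Thus $\{x^k\}$ is Cauchy and converges to some $x^\infty$.

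It remains to identify $x^\infty$. The summation above must be run as a \emph{strong induction}: as long as the earlier iterates lie in $\mathcal B_\rho(x^\star)$ the cumulative displacement $\sum_{j\le k}\Delta_j$ is bounded by $2\Delta_1+M\psi(r_0)$, which $\rho$ was chosen to exceed, so every iterate stays in $\mathcal B_\rho(x^\star)$ and the KL steps are legitimate throughout, giving $x^\infty\in\overline{\mathcal{B}_\rho(x^\star)}$. Passing to the limit in the subgradient estimate \eqref{eq:40} and using $\Delta_k\to0$, $e^k\to0$, closedness of $\partial H_\delta$, and $H_{\delta_k}(x^k)\to H_\delta(x^\infty)=\ell$, we obtain $0\in\partial H_\delta(x^\infty)$ with $H_\delta(x^\infty)=\ell\le H_{\delta_0}(x^0)$; since $H(x^\infty)\le H_\delta(x^\infty)$ and $x^\star$ minimizes $H$ globally, $\ell=H(x^\star)=\min H$, so $x^\infty$ is a global minimizer of $H_\delta$, as claimed.

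The main obstacle, and the step I would be most careful with, is the localization induction in the last paragraph: one must show \emph{a priori} that no iterate escapes $\mathcal B_\rho(x^\star)$ before the KL inequality is invoked for it, and because the step sizes $\eta_{\mu^k}$ make $\underline a_k$ and $b_k$ iteration-dependent, this requires the uniform bound of $\mathbf{H}_3$(iii) (the non-summability $\mathbf{H}_3$(ii) holds automatically here, since the step sizes are bounded below by $\eta>0$) rather than constant-coefficient bookkeeping. A secondary technical point is deducing $H_{\delta_0}(x^0)<\eta$ from $x^0\in\mathcal B_\rho(x^\star)$ alone, which is precisely where global minimality of $x^\star$ (rather than mere criticality) is used.
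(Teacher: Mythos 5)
Your proposal is correct and follows essentially the same route as the paper: the paper's proof of Theorem~\ref{thm:3.5} simply defers to the abstract descent argument of \cite[Section 2.3]{attouch2013convergence} (adapted for variable parameters and errors), and your write-up is precisely that argument instantiated with $\mathbf{H}_1$--$\mathbf{H}_3$, including the correct identification of $\mathbf{H}_3$(iii) as the uniform bound on $(\underline a_k b_{k-1})^{-1}$ and the localization induction keeping iterates in $\mathcal{B}_\rho(x^\star)$. The only point you gloss slightly is why the limit value $\ell$ of $H_{\delta_k}(x^k)$ equals the minimum (standardly, $\operatorname{dist}(0,\partial H_{\delta_k}(x^k))\to 0$ plus the KL inequality forces $\psi'(r_k)\to\infty$, hence $r_k\to0$), but this is the usual closing step of the cited argument and not a genuine gap.
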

\begin{proof}
As noted in \cite{attouch2013convergence}, Theorem \ref{thm:3.5} allows for a more general formulation. For example, when $x^{\star}$ is a local minimum of $H_{\delta}$, a growth property holds locally (refer to \cite[Remark 2.11]{attouch2013convergence}  for details).

The proof of Theorem 3.1 and Theorem 3.2 draws on the reasoning presented in \cite[Section 2.3]{attouch2013convergence}, with adaptations made to account for the existence of errors and the variability shown by the parameters.
\end{proof}

\begin{theorem}\label{thm:3.6}
Let $\{x^k\}_{k \in \mathbb{N}}$ be any sequence generated by Algorithm \ref{Algorithm2}. Suppose that $H_{\delta}$ satisfies the KL property on the cluster points of $\{x^k\}_{k \in \mathbb{N}}$ with exponent $\theta \in (0, 1)$, then $\{x^k\}_{k \in \mathbb{N}}$ converges to $x^{\star}$ such that $0 \in \partial H_{\delta}(x^{\star})$ and the following inequalities hold. Assume~$\varphi(t)=\frac{C}{\theta}t^{\theta}$ for some $C>0$, $\theta\in[0,1]$.\nonumber\\
(i)If $\theta=1$ and $\inf_{k\in\mathbb{N}}\frac{[\zeta(\gamma-2)+(1-\gamma)]\gamma\eta_{u^{k}}}{2[(\Re\zeta+1)(Ls\cdot\gamma\cdot\eta_{u^{k}}+1)+\zeta]^{2}}>0$, then $x^{k}$ converges in finite time.\nonumber\\
(ii)If $\theta\in[\frac{1}{2},1]$, $\operatorname*{sup}_{k\in\mathbb{N}}\frac{\gamma \eta_{u^{k-1}}}{(\Re \zeta + 1)(L_{s}\cdot \gamma\cdot \eta_{u^{k-1}} + 1) + \zeta} <+\infty$ and
 $\operatorname*{inf}_{k\in\mathbb{N}}\frac{\zeta(\gamma-2)+1-\gamma}{2[(\Re\zeta + 1)(L_{s}\cdot\gamma\cdot\eta_{u^{k}} + 1) + \zeta]}>0$,
there   exist $c>0$ and $k_{0}\in\mathbb{N}$ such that
\begin{itemize}
   \item $\begin{aligned}[t]
      H_{\delta}(x^{k}) - H_{\delta}(x^{\star}) &= O\left(\exp\left(-c\sum_{n=k_0}^{k-1}\frac{\gamma \eta_{u^{n}}}{(\Re \zeta + 1)(L_{s}\cdot \gamma\cdot \eta_{u^{n}} + 1) + \zeta}\right)\right),
   \end{aligned}$
   
   \item $\begin{aligned}[t]
      \|x^{\star} - x^k\| &= O\left(\exp\left(-\frac{c}{2}\sum_{n=k_0}^{k-2}\frac{\gamma \eta_{u^{n}}}{(\Re \zeta + 1)(L_{s}\cdot\gamma\cdot \eta_{u^{n}} + 1) + \zeta}\right)\right).
   \end{aligned}$
\end{itemize}
 (iii)If $\theta\in[0,\frac{1}{2}],\operatorname*{sup}_{k\in\mathbb{N}}\frac{\gamma \eta_{u^{k-1}}}{(\Re \zeta + 1)(L_{s} \cdot\gamma\cdot \eta_{u^{k-1}} + 1) + \zeta} <+\infty$ and
 $\operatorname*{inf}_{k\in\mathbb{N}}\frac{\zeta(\gamma-2)+1-\gamma}{2[(\Re\zeta + 1)(L_{s}\cdot\gamma\cdot\eta_{u^{k}} + 1) + \zeta]}>0$,
 there is $k_{0}\in\mathbb{N}$ such that
 
\begin{itemize}
   \item $\begin{aligned}[t]
      H_{\delta}(x^k)-H_{\delta}(x^{\star}) &= O\bigg(\bigg(\sum_{n=k_0}^{k-1}\frac{\gamma \eta_{u^{n}}}{(\Re \zeta + 1)(L_{s}\cdot\gamma\cdot\eta_{u^{n}} + 1) + \zeta}\bigg)^{\!\!\frac{-1}{1-2\theta}}\bigg),
   \end{aligned}$
   
   \item $\begin{aligned}[t]
      \|x^\star-x^k\| &= O\bigg(\bigg(\sum_{n=k_0}^{k-2}\frac{\gamma \eta_{u^{n}}}{(\Re \zeta + 1)(L_{s}\cdot \gamma\cdot \eta_{u^{n}} + 1) + \zeta}\bigg)^{\!\!\frac{-\theta}{1-2\theta}}\bigg).
   \end{aligned}$
\end{itemize}

\begin{proof}The proof technique follows the route in \cite{frankel2015splitting}. We present the proof detail for the case $\theta\in[0,1)$, because the relation  implies that the Algorithm \ref{Algorithm2} enjoys a linear convergence rate which differs from the local convergence rate analysis based on KL property
$\operatorname{in}$ \cite{attouch2013convergence, li2015accelerated, stella2017forward, themelis2018forward}. Let $R_k:=H_{\delta_{k}}(x^k)-H_{\delta}(x^{\star})\geq0$, we can suppose that $R_k>0$ for all $k\in\mathbb{N}$, because otherwise the algorithm terminates in a finite number of steps. Since $x^k$ converges to $x^{\star}$, there exists $k_0\in\mathbb{N}$ such that, for all $k\geq k_0$, we have $x^k\in\mathcal{B}_\rho(\hat{x})$ where the KL inequality holds. Using successively $\mathbf{H}_1,\mathbf{H}_2$ and the KL inequality, we obtain
\begin{align}
\varphi^{\prime2}(R_{k+1})(R_k-R_{k+1})
& \geq\varphi^{\prime2}(R_{k+1})\frac{[\zeta(\gamma-2)+(1-\gamma)]\gamma\eta_{u^{k}}}{2[(\Re \zeta+1)(Ls \cdot\gamma\cdot\eta_{u^{k}}+1)+\zeta]^{2}}\|\partial H_{\delta}(x^{k+1})\|^2 \nonumber\\ &\geq\frac{[\zeta(\gamma-2)+(1-\gamma)]\gamma\eta_{u^{k}}}{2[(\Re \zeta+1)(Ls \cdot\gamma\cdot\eta_{u^{k}}+1)+\zeta]^{2}}.\tag{42}\label
{eq:42}
\end{align}
 for each $k\geq k_0$. Let us now consider different cases for $\theta{:}$

 Case $\theta=1{:}$ Suppose that $R_k>0$ for all $k\in\mathbb{N}.$ Then, for each $k\geq k_0$, we have 
 \[
 C^2(R_k-R_{k+1})\geq \frac{[\zeta(\gamma-2)+(1-\gamma)]\gamma\eta_{u^{k}}}{2[(\Re \zeta+1)(Ls\cdot \gamma\cdot\eta_{u^{k}}+1)+\zeta]^{2}}\geq\inf_{k\in\mathbb{N}}\frac{[\zeta(\gamma-2)+(1-\gamma)]\gamma\eta_{u^{k}}}{2[(\Re \zeta+1)(Ls\cdot \gamma\cdot\eta_{u^{k}}+1)+\zeta]^{2}}>0.
 \]
 Since $R_k$ converges, we must have $\inf_{k\in\mathbb{N}}\frac{[\zeta(\gamma-2)+(1-\gamma)]\gamma\eta_{u^{k}}}{2[(\Re \zeta+1)(Ls\cdot \gamma\cdot\eta_{u^{k}}+1)+\zeta]^{2}}=0$, which is a contradiction. Therefore, there exists some $k\in\mathbb{N}$ such that $R_k=0$, and the algorithm terminates in a finite number of steps.

 Case $\theta\in[0,1]:$ $\nu_{k}=\frac{\gamma \eta_{u^{k-1}}}{(\Re \zeta + 1)(L_{s}\cdot \gamma \cdot\eta_{u^{k-1}} + 1) + \zeta}$, $\bar{\nu}:=\sup_{k\in\mathbb{N}}\frac{\gamma \eta_{u^{k-1}}}{(\Re \zeta + 1)(L_{s}\cdot \gamma \cdot\eta_{u^{k-1}} + 1) + \zeta}$,\\
 $\overline{m}: =\inf_{k\in\mathbb{N}}\frac{\zeta(\gamma-2)+1-\gamma}{2[(\Re\zeta + 1)(L_{s}\cdot\gamma\cdot\eta_{u^{k}} + 1) + \zeta]}$, $c=\frac{\overline{m}}{C^2(1+\bar{\nu})}$ and, for
each $k\in\mathbb{N},\beta_k:=\frac{\nu_k \overline{m}}{C^2}.$ For each $k\geq k_0$, \eqref{eq:42} gives
\begin{align}
(R_k-R_{k+1})\geq\frac{\frac{[\zeta(\gamma-2)+(1-\gamma)]\gamma\eta_{u^{k}}}{2[(\Re \zeta+1)(Ls\cdot \gamma\cdot\eta_{u^{k}}+1)+\zeta]^{2}}R_{k+1}^{2-2\theta}}{C^2}\geq\beta_{k+1}R_{k+1}^{2-2\theta}.
\tag{43}\label{eq:43}
\end{align}

Subcase $\theta\in[\frac{1}{2},1]:$ Since $R_k\to0$ and $0<2-2\theta\leq1$, we may assume, by enlarging $k_0$ if necessary, that $R_{k+1}^{2-2\theta}\geq R_{k+1}$ for all $k\geq k_0.$ Inequality \eqref{eq:43} implies $(R_k-R_{k+1})\geq\beta_{k+1}R_{k+1}$, equivalently, $R_{k+1}\leq R_k\left(\frac1{1+\beta_{k+1}}\right)$ for all $k\geq k_0.$ By induction, we obtain
$$R_{k+1}\leq R_{k_0}\left(\prod_{n=k_0}^k\frac{1}{1+\beta_{n+1}}\right)=R_{k_0}\exp\left(\sum_{n=k_0}^k\ln\left(\frac{1}{1+\beta_{n+1}}\right)\right),$$
for all $k\geq k_0$. However, $\ln\left(\frac{1}{1+\beta_{n+1}}\right)\leq\frac{-\beta_{n+1}}{1+\beta_{n+1}}\leq\frac{-1}{1+\bar{\nu}}\beta_{n+1}$, and so\\
$$R_{k+1}\leq R_{k_{0}}\exp\left\{\sum_{n=k_{0}}^{k}\left(\frac{-1}{1+\bar{\nu}}\beta_{n+1}\right)\right\}=R_{k_{0}}\exp\left(-c\sum_{n=k_{0}}^{k}b_{n+1}\right).$$

Subcase  $\theta\in[0,\frac{1}{2}]:$ Recall from inequality \eqref{eq:43} that $R_{k+1}^{2\theta-2}(R_k-R_{k+1})\geq\beta_{k+1}.$
Setting $\phi(t):=\frac{C}{1-2\theta}{t}^{2\theta-1}$, we immediately obtain $\phi^{\prime}(t)=-Ct^{2\theta-2}$, and
$$\phi(R_{k+1})-\phi(R_k)=\int_{R_k}^{R_{k+1}}\phi^{\prime}(t)dt=C\int_{R_{k+1}}^{R_k}t^{2\theta-2}dt\geq C(R_k-R_{k+1})R_k^{2\theta-2}.$$
On the one hand, if we suppose that $R_{k+1}^{2\theta-2}\leq2R_k^{2\theta-2}$, then

$$\phi\left(R_{k+1}\right)-\phi\left(R_k\right)\geq\frac{C}{2}(R_k-R_{k+1})R_{k+1}^{2\theta-2}\geq\frac{C}{2}\beta_{k+1}.$$
On the other hand, we have that $R_{k+1}^{2\theta-2}>2R_k^{2\theta-2}$. Since $2\theta-2<2\theta-1<0$, we
obtain $\frac{2\theta-1}{2\theta-2}>0$. Thus $R_{k+1}^{2\theta-1}>\Lambda R_k^{2\theta-1}$, where $\Lambda:=2^{\frac{2\theta-1}{2\theta-2}}>1.$ Therefore,
$$\phi(R_{k+1})-\phi(R_k)=\frac{C}{1-2\theta}(R_{k+1}^{2\theta-1}-R_k^{2\theta-1})>\frac{C}{1-2\theta}(\Lambda-1)R_k^{2\theta-1}\geq C^{\prime},$$
with $C^{\prime}:=\frac{C}{1-2\theta}(\Lambda-1)R_{k_0}^{2\theta-1}>0.$ Since $\beta_{k+1}\leq\frac{\bar{\nu}\overline{m}}{C^2}$, we can write
$$\phi\left(R_{k+1}\right)-\phi\left(R_k\right)\geq\frac{C^{\prime}C^2}{\bar{\nu}\overline{m}}\beta_{k+1}.$$
Setting $c:=\min\{\frac{C}{2},\frac{C^{\prime}C^2}{\bar{\nu}\overline{m}}\}>0,$ we can write $\phi(R_{k+1})-\phi(R_{k})\geq c\beta_{k+1}$
for all $k\geq k_0.$ This implies
$$\phi(R_{k+1})\geq\phi(R_{k+1})-\phi(R_{k_0})=\sum_{n=k_0}^k\phi(R_{n+1})-\phi(R_n)\geq c\sum_{n=k_0}^k\beta_{n+1},$$
which is precisely $R_{k+1}\leq D\left(\sum_{n=k_0}^{k} \frac{\gamma \eta_{u^{n}}}{(\Re \zeta + 1)(L_{s}\cdot\gamma\cdot\eta_{u^{n}} + 1) + \zeta}\right)^{\frac{-1}{1-2\theta}}$ with $D=\left(\frac{c\overline{m}(1-2\theta)}{C^3}\right)^{\frac{-1}{1-2\theta}}.$
 \end{proof}
\end{theorem}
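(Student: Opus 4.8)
The plan is to run the standard KL-based rate argument (in the spirit of \cite{attouch2013convergence,frankel2015splitting}), but keeping the iteration-dependent step sizes $\eta_{\mu^k}$ and the error $e^k$ explicit throughout rather than replacing them by fixed constants. Write $R_k:=H_{\delta_k}(x^k)-H_\delta(x^\star)\ge0$; if $R_k=0$ for some $k$ the method stops in finitely many steps, so assume $R_k>0$ for all $k$. By Theorem~\ref{thm:3.4} the sequence $\{x^k\}$ has finite length, hence converges to some $x^\star$, and Theorem~\ref{thm:3.3} gives $0\in\partial H_\delta(x^\star)$. Therefore there is $k_0$ with $x^k\in\mathcal{B}_\rho(\hat x)$ for all $k\ge k_0$, where the KL inequality holds; with $\varphi(t)=\tfrac{C}{\theta}t^{\theta}$ it reads $\|\partial H_\delta(x^{k+1})\|\ge\tfrac1C R_{k+1}^{\,1-\theta}$.

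The engine is a single recursive inequality. Chaining $\mathbf{H}_1$ (which gives $R_k-R_{k+1}\ge\tfrac{\zeta(\gamma-2)+1-\gamma}{2\gamma\eta_{\mu^k}}\|x^{k+1}-x^k\|^2$), $\mathbf{H}_2$ with $\varepsilon_{k+1}\equiv0$ (which gives $\|x^{k+1}-x^k\|\ge\nu_{k+1}\|\partial H_{\delta_{k+1}}(x^{k+1})\|$, $\nu_{k+1}=\tfrac{\gamma\eta_{\mu^k}}{(\Re\zeta+1)(L_s\gamma\eta_{\mu^k}+1)+\zeta}$), and the KL bound above, one obtains for all $k\ge k_0$
$$R_k-R_{k+1}\ \ge\ \frac{1}{C^2}\cdot\frac{[\zeta(\gamma-2)+(1-\gamma)]\gamma\eta_{\mu^k}}{2[(\Re\zeta+1)(L_s\gamma\eta_{\mu^k}+1)+\zeta]^2}\,R_{k+1}^{\,2-2\theta}\ =:\ \beta_{k+1}R_{k+1}^{\,2-2\theta}.$$
All three regimes are then a case analysis on $\theta$ applied to this recursion.

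For $\theta=1$: here $R_k-R_{k+1}\ge\inf_k\beta_{k+1}>0$ by the stated infimum hypothesis, whereas $\sum_k(R_k-R_{k+1})=R_{k_0}<\infty$ forces $R_k-R_{k+1}\to0$; the contradiction means $R_k=0$ eventually, i.e.\ finite termination. For $\theta\in[\tfrac12,1]$: since $0<2-2\theta\le1$ and $R_k\to0$, after enlarging $k_0$ we have $R_{k+1}^{\,2-2\theta}\ge R_{k+1}$, hence $R_{k+1}\le R_k/(1+\beta_{k+1})$; iterating and using $\ln\tfrac{1}{1+t}\le-\tfrac{t}{1+t}\le-\tfrac{t}{1+\bar{\nu}}$ with $\bar{\nu}:=\sup_k\nu_k$ yields the exponential decay of $R_k$, and the bound on $\|x^\star-x^k\|$ follows from the tail estimate $\|x^\star-x^k\|\le\sum_{j\ge k}\|x^{j+1}-x^j\|$ together with $\|x^{j+1}-x^j\|\lesssim(R_j-R_{j+1})^{1/2}$, which halves the rate in the exponent. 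For $\theta\in[0,\tfrac12]$: now $2-2\theta>1$; put $\phi(t):=\tfrac{C}{1-2\theta}t^{2\theta-1}$ so $\phi'(t)=-Ct^{2\theta-2}<0$, estimate $\phi(R_{k+1})-\phi(R_k)=C\int_{R_{k+1}}^{R_k}t^{2\theta-2}\,dt\ge C(R_k-R_{k+1})R_k^{2\theta-2}$, and split on whether $R_{k+1}^{2\theta-2}\le2R_k^{2\theta-2}$ to obtain $\phi(R_{k+1})-\phi(R_k)\ge c\,\beta_{k+1}$ for a uniform $c>0$; summing gives $\phi(R_{k+1})\ge c\sum_{n=k_0}^k\beta_{n+1}$, i.e.\ the polynomial rate $R_{k+1}=O\big((\sum_n\nu_{n+1})^{-1/(1-2\theta)}\big)$, and the iterate bound again follows by summing the step sizes and tracking exponents.

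The main obstacle is not a new idea but the bookkeeping of the \emph{variable} coefficients $\nu_k$ and $\beta_k$: the scheme closes only because Lemma~\ref{lem:3.3} makes $\{\eta_{\mu^k}\}$ bounded (so $\bar{\nu}<\infty$ and $\inf_k\beta_k$ are meaningful) and Lemma~\ref{lem:8} makes $\|e^k\|\to0$ (so the error terms in $\mathbf{H}_2$ are negligible, and the theorem simply takes $\varepsilon_k\equiv0$); one must also verify that the parameter ranges of Algorithm~\ref{Algorithm2}, in particular $0<\zeta<\tfrac{1-\gamma}{2-\gamma}$, enforce the sign and summability requirements $\mathbf{H}_3$(i)--(iv). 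The second delicate point is the passage from a rate on the function gap $R_k$ to a rate on $\|x^\star-x^k\|$, which genuinely uses the finite-length property of Theorem~\ref{thm:3.4} and a careful estimate of $\sum_{j\ge k}\|x^{j+1}-x^j\|$ in terms of $\varphi(R_k)$ (respectively $\phi(R_k)$ in the last case).
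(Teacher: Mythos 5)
Your proposal follows essentially the same route as the paper's proof: the same recursion $R_k-R_{k+1}\ge\beta_{k+1}R_{k+1}^{2-2\theta}$ obtained by chaining $\mathbf{H}_1$, $\mathbf{H}_2$ (with $\varepsilon_k\equiv0$) and the KL inequality, the same contradiction argument for $\theta=1$, the same $1/(1+\beta_{k+1})$ contraction and logarithm estimate for $\theta\in[\tfrac12,1]$, and the same auxiliary function $\phi(t)=\tfrac{C}{1-2\theta}t^{2\theta-1}$ with the two-case split for $\theta\in[0,\tfrac12]$. If anything, you are slightly more careful than the paper in spelling out how the rate on $R_k$ transfers to $\|x^\star-x^k\|$ via the tail sum $\sum_{j\ge k}\|x^{j+1}-x^j\|$ and the finite-length property, a step the paper's proof leaves implicit.
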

\section{Experiments}\label{sec:numerical_experiment}
We compare sPDOME and PDOME with widely recognized first-order methods, including PG \cite{combettes2011proximal} and mAPG \cite{li2015accelerated}, and second-order methods, such as PANOC \cite{stella2017simple} and its variant \cite{de2022proximal}, as well as PDOM \cite{zhou2024proximal}. All algorithm comparisons were performed on a Windows 10 computer equipped with an Intel(R) Core(TM) i7-12700 2.10 GHz CPU and 16GB of memory, with all algorithms implemented and run in MATLAB. Each benchmark algorithm is fine-tuned for a fair comparison. The hyperparameter settings in the  Algorithm \ref{Algorithm1} are as follows: $\eta=1/L_s$, $\gamma=0.98$ and $\epsilon^{\mathrm{abs}}=\epsilon^{\mathrm{rel}}=10^{-12}$. The hyperparameter settings in the Algorithm \ref{Algorithm2} are as follows: $\eta=1/L_s$, $\gamma=0.94$ and $\epsilon^{\mathrm{abs}}=\epsilon^{\mathrm{rel}}=10^{-12}$. All algorithms share the same randomly selected initial point $x^{0}$ and are stopped if $\left\|x^{k+1}-x^{k}\right\|/\left(1+\left\|x^{k+1}\right\|\right)<10^{-8}$ or $k>2000$. In the $k$-th iteration, we calculate the subdifferential and the normalized recovery error, NRE$(k)=\|x^k-x^\star\|/\|x^\star\|.$
\subsection{Nonconvex Sparse Recovery Problem} The sparse recovery aims to recover the original sparse signal from an under-determined set of measurements.
\begin{align}
\min_{x\in\mathbb{R}^n}\frac{1}{2}\|y-{\Delta}{\Upsilon}{x}\|^2+\lambda\|{x}\|_0,
\tag{44}\label{eq:44}
\end{align}
where $\Delta\in\mathbb{R}^{m\times n}$ denotes the measurement matrix, $\Upsilon\in\mathbb{R}^{n\times n}$ is the sparse transformation basis and $\lambda>0.$ In this context, the overall sensing matrix is specifically the subsampled Discrete Cosine Transform (DCT) \cite{stankovic2018analysis}, where $\Delta$  serves as the sub-sampling matrix and $\Upsilon$ represents the DCT basis.

The Hessian of $\bm{\mathrm{M}}$ in \eqref{eq:44} takes the form of $\Upsilon^\top\Delta^\top\Delta\Upsilon.$ The fast matrix inversion involves expressing $\left(\Upsilon^{\top}\Delta^{\top}\Delta\Upsilon\right)^{-1}$ as $\Upsilon ^{- 1}\left ( \Delta ^{\top }\Delta \right ) ^{- 1}{\Upsilon^{\top}}^{- 1}= \Upsilon ^{\top }\left ( \Delta ^{\top }\Delta \right ) ^{- 1}\Upsilon$, exploiting the transpose property of the DCT basis. Notably, $\Delta^{\top}\Delta$ is a rank-deficient diagonal matrix. To ensure the invertibility, a small positive value $\iota$ is incorporated along the diagonal. Due to its diagonal structure, the computational complexity of the inversion is $\mathcal{O}(n).$
\begin{figure}[h!]
  \begin{subfigure}[b]{0.32\textwidth}
    \includegraphics[width=\textwidth]{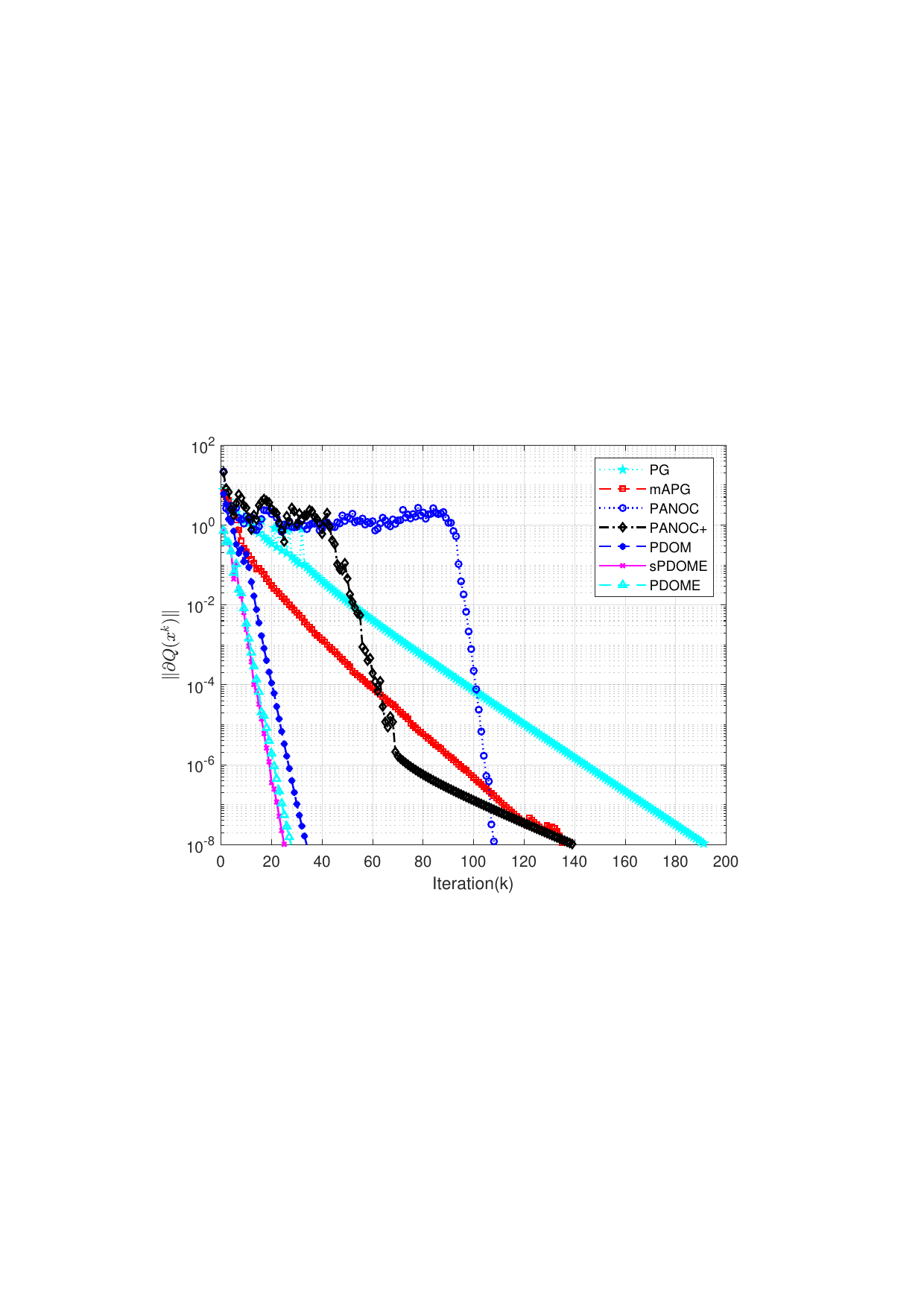}
    \caption{}
  \end{subfigure}
  \hfill
  \begin{subfigure}[b]{0.32\textwidth}
    \includegraphics[width=\textwidth]{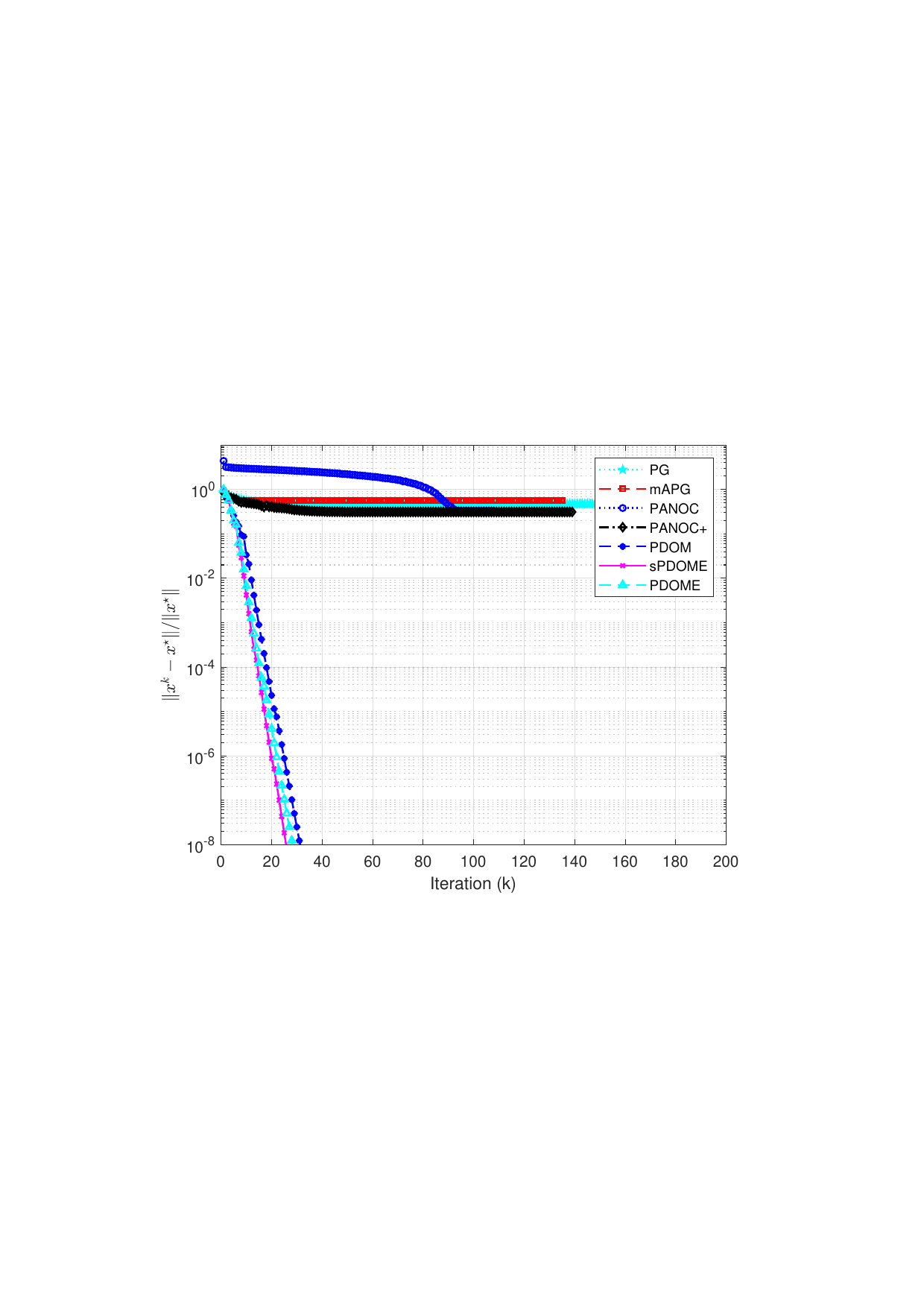}
    \caption{}
  \end{subfigure}
    \hfill
  \begin{subfigure}[b]{0.32\textwidth}
    \includegraphics[width=\textwidth]{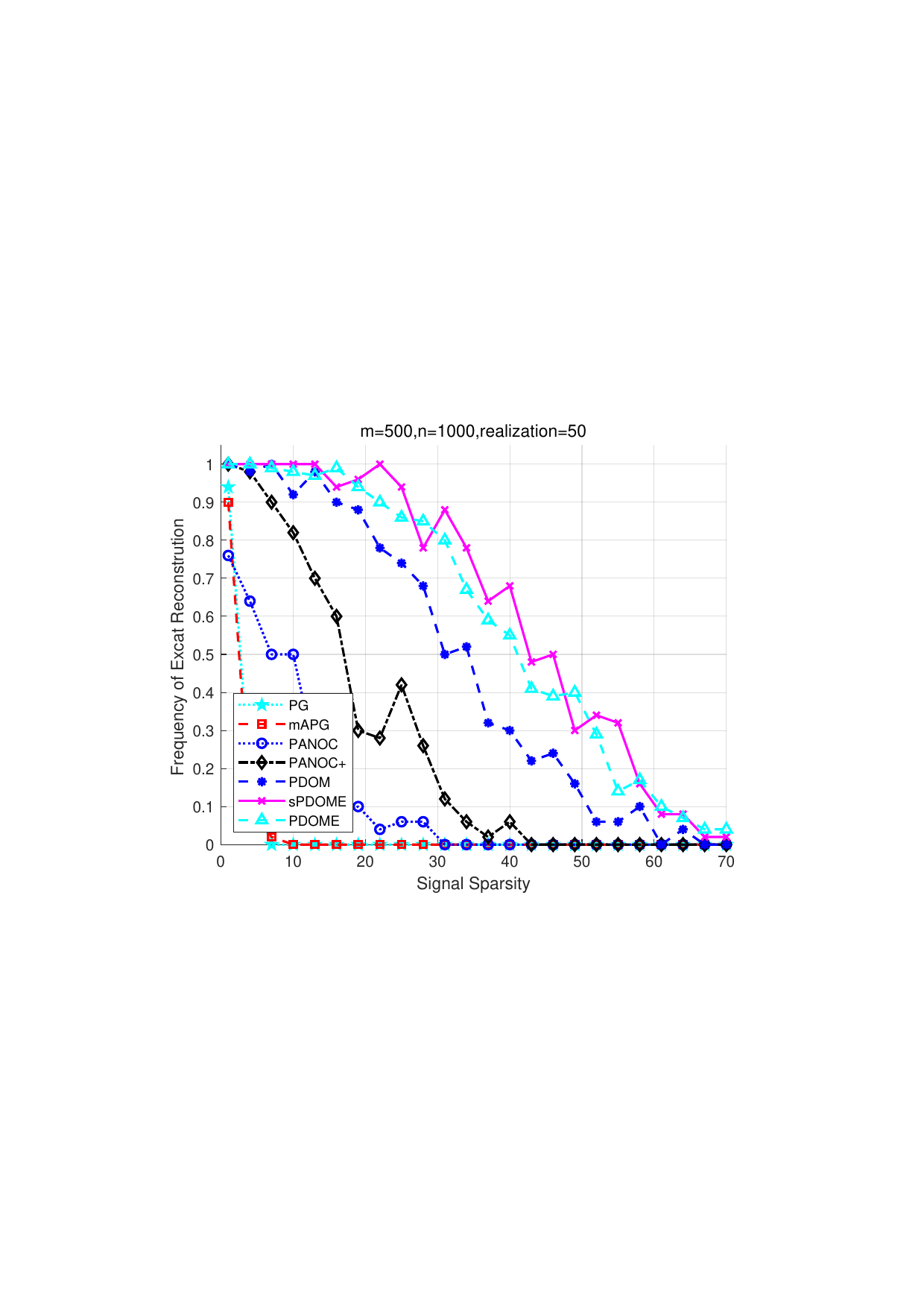}
    \caption{}
  \end{subfigure}
  \caption{Left: Performance comparisons of subdifferential. Middle: Performance comparisons of normalized recovery error of
sparse signal. Right: Phase transition curve of $\ell_0$ sparse recovery at varying sparsities.}
   \label{NEfig2}
\end{figure}
The experimental settings are summarized below: $m=$ $n/2$, ${y}={\Delta\Upsilon}x^\star$, $A=\Delta\Upsilon$, where the ground truth $x^\star$ is sparse with randomly generated entries and $\lambda=0.1\left|{A}^T{y}\right|_\infty$, following the strategy outlined in \cite{van2009probing}.

\begin{table}[htbp]
  \centering
   \caption{ Average NRE and number of iterations to reach $\|u^{k+1}\| < 10^{-12}$, $u^{k+1} \in \partial Q(x^{k+1})$ for 20 independent trials with different $m$. Sparsity level = 0.01$m$.}
  \begin{tabular}{c c c c}
    \toprule
    $m$  & 100 & 500 & 1000 \\
    Algorithm & NRE/\#Iter & NRE/\#Iter & NRE/\#Iter \\
    \midrule
    PG
 & 7.00e-01 / 140.8 & 1.82e-01 / 171.2 & 9.24e-02 / 117.8 \\
  mAPG
 & 1.00e+00 / 2.0 & 4.88e-01 / 182.1& 2.91e-01 / 134.1 \\
    PANOC
 & 3.41e+01 / 29.2 & 6.24e-01 / 38.8  & 4.56e-01 / 5.2 \\
    PANOC+
 & 8.12e-14 / 13.4  & 1.30e-02 / 36.0 & 2.29e-13 / 44.0 \\
    PDOM
 & 5.10e-13 / 151.8 & 2.40e-13 / 38.0 & 1.74e-13 / 27.2\\
    \textbf{sPDOME}
 & \textbf{8.68e-13 / 15.4} & \textbf{6.12e-13 / 18.9} & \textbf{6.88e-13 / 18.7}\\
    \textbf{PDOME}
 & \textbf{9.11e-13 / 16.7} & \textbf{7.26e-13 / 24.9} & \textbf{6.20e-13 / 27.3}\\
    \bottomrule
  \end{tabular}
 
  \label{tab:II} 
\end{table}
In Subfigures $(a)$ and $(b)$ of Figure \ref{NEfig2} depict specific convergence behavior of the compared algorithm for one realization with $m=500$ in Table \ref{tab:II}. sPDOME and PDOME outperform other baseline algorithms in terms of convergence speed and global optimality. In Subfigure $(c)$ of Figure \ref{NEfig2}, the phase transition curve is illustrated, where realizations with random initialization are considered successful if NRE$<10^{-4}.$  The results demonstrate a significantly higher success recovery rate for the sPDOME and PDOME algorithms compared to the benchmark algorithms.

\subsection{Nonconvex Sparse Approximation Problem}
Here, we address the challenge of identifying a sparse solution to a least-squares problem. As elaborated in \cite{xu2012l_}, this is accomplished by tackling the following nonconvex optimization problem:
\begin{align}
\text{minimize }\frac12\|Ax-b\|^2+\lambda\|x\|_{1/2}^{1/2},
\tag{45}\label{eq:45}
\end{align}
where $\lambda>0$ is a regularization parameter, and $\|x\|_{1/2}=\left(\sum_{i=1}^n|x_i|^{1/2}\right)^2$ is the quasi-norm $\ell_{1/2}$, a nonconvex regularizer whose role to induce the solution of \eqref{eq:45}. Function $\|x\|_{1/2}^{1/2}$ is separable, and its proximal mapping can be
\text{computed in closed form as follows, see (\cite[Theorem 1]{xu2012l_} ): for} $i=1,\ldots,n$.
$$\left[\mathrm{prox}_{\eta\left\|\cdot\right\|_{1/2}^{1/2}}(x)\right]_i=\frac{2x_i}{3}\left(1+\cos\left(\frac{2\pi}{3}-\frac{2p_\eta(x_i)}{3}\right)\right),$$
where $p_\eta ( x_i) = \textbf{arccos}\left ( \eta / 8\left ( | x_i| / 3\right ) ^{- 3/ 2}\right ).$ We performed experiments using the setting of \cite[Sec. 8.2]{daubechies2010iteratively}: matrix $A\in\mathbb{R}^{m\times n}$ has $m=n/5$ rows and was generated with random Gaussian entries, with zero mean and variance $1/m.$ Vector $b$ was generated as $b=Ax_\mathrm{orig}+v$ where $x_\mathrm{orig}\in\mathbb{R}^n$ was randomly generated with $k=5$ nonzero normally distributed entries, and $v$ is a noise vector with zero mean and variance $1/m$. Then we solved problem \eqref{eq:45} using $x^{0}=0$ as starting iterate for all algorithms. In this experiment, Figure \ref{NEfig3} shows that the proposed algorithms are effective.
\begin{figure}[!th]
    \centering
    \begin{subfigure}[b]{0.32\textwidth}
        \includegraphics[width=\linewidth]{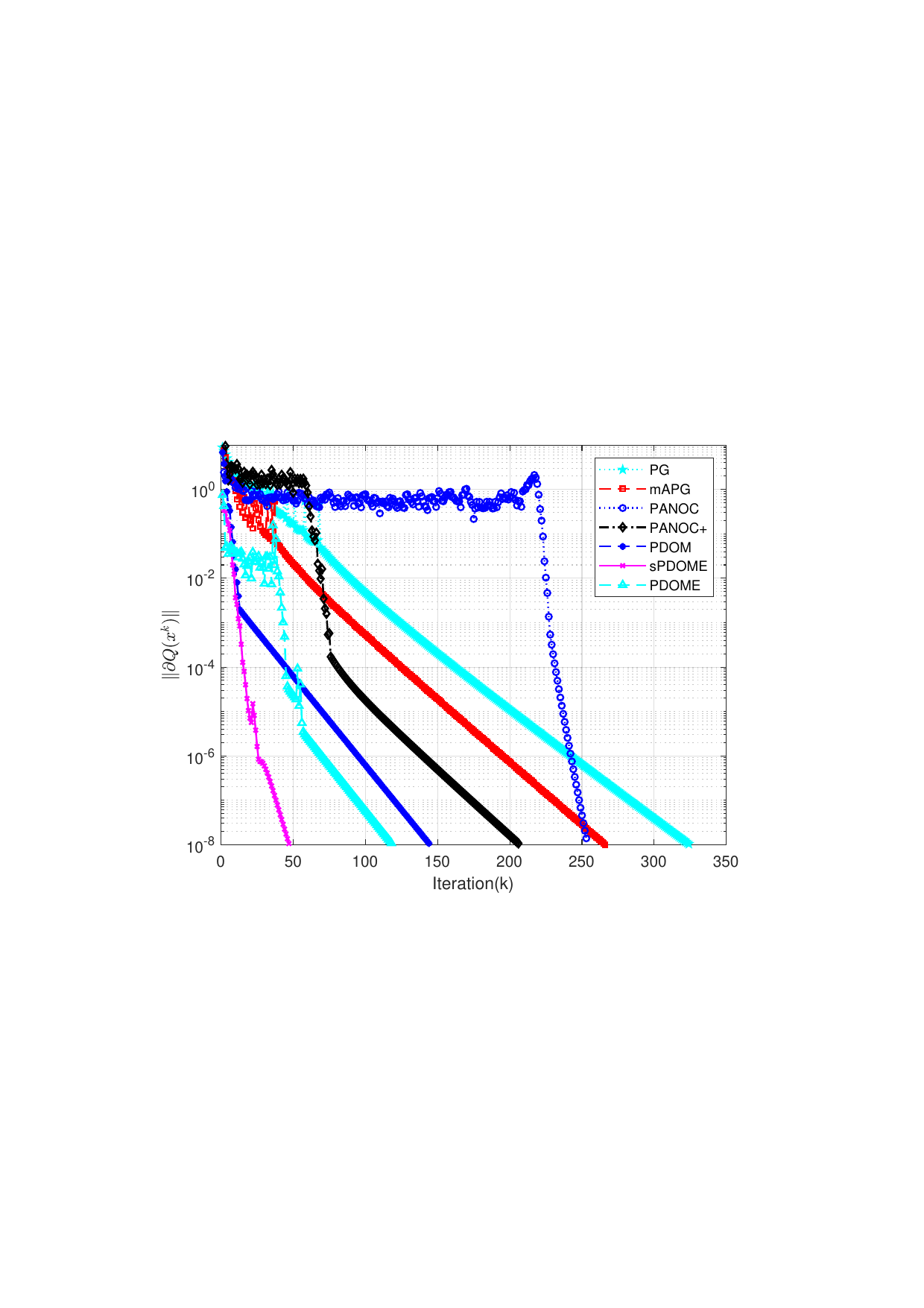}
        \caption{}
    \end{subfigure}
    \hfill
    \begin{subfigure}[b]{0.32\textwidth}
        \includegraphics[width=\linewidth]{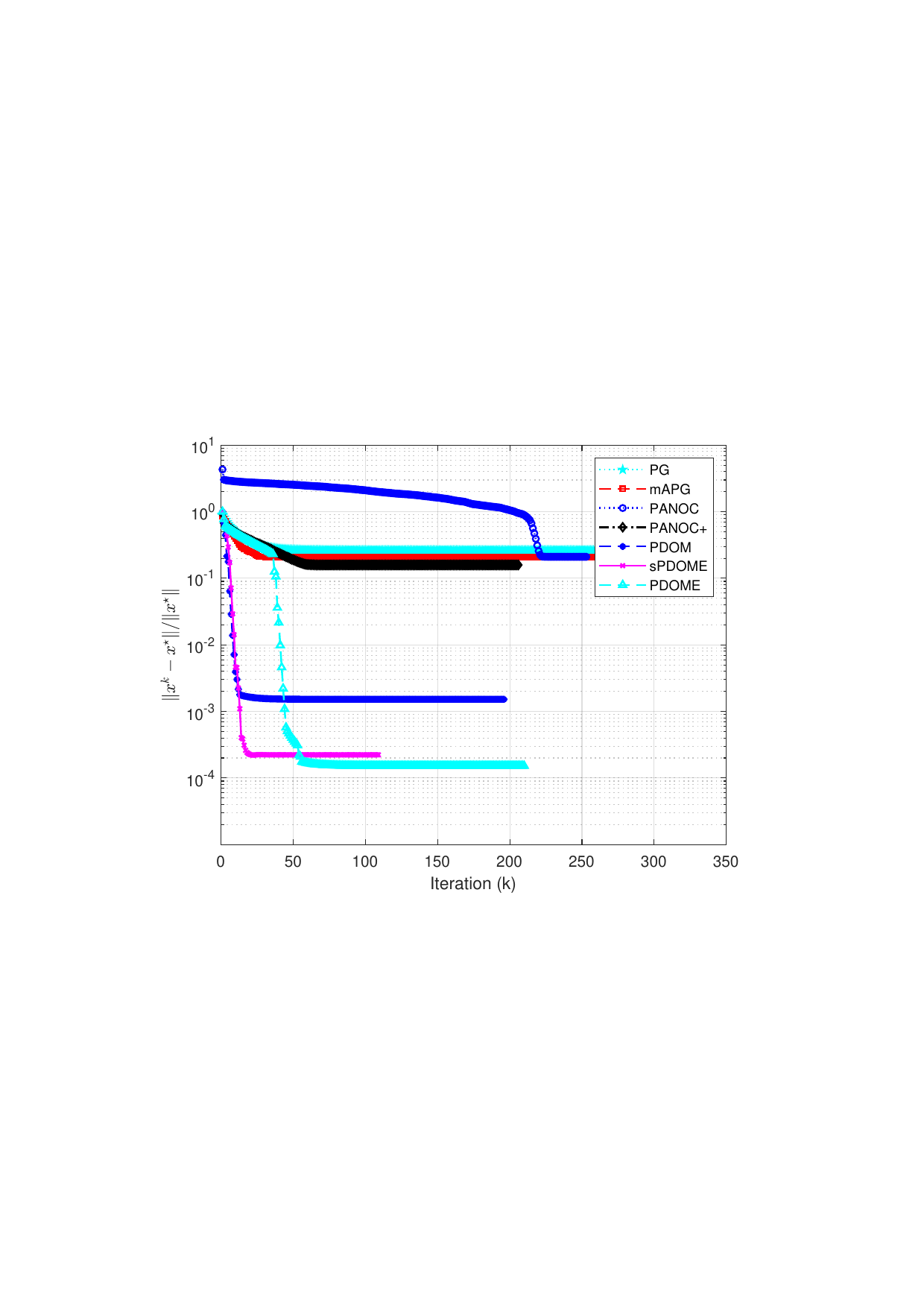}
        \caption{}
    \end{subfigure}
    \hfill
    \begin{subfigure}[b]{0.32\textwidth}
        \includegraphics[width=\linewidth]{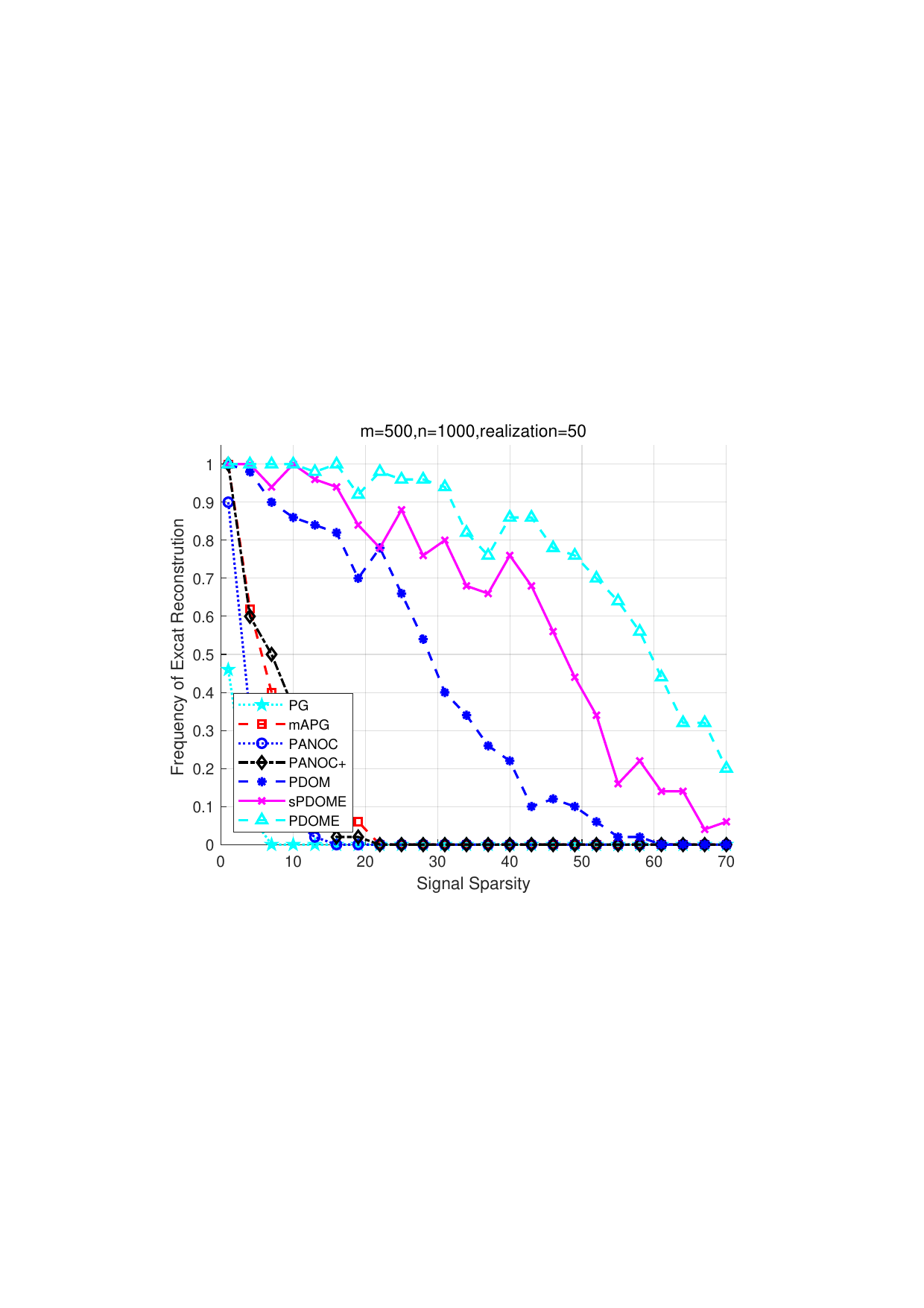}
        \caption{}
    \end{subfigure}
    \caption{Left: Performance comparisons of subdifferential. Middle: Performance comparisons of normalized recovery error of
sparse signal. Right: Phase transition curve of $\ell_{1/2}$ sparse recovery at varying sparsities.}
    \label{NEfig3}
\end{figure}

\begin{table}[htbp]
  \centering
   \caption{ Average NRE and number of iterations to reach $\|u^{k+1}\| < 10^{-12}$, $u^{k+1} \in \partial Q(x^{k+1})$ for 20 independent trials with different $m$. Sparsity level = 0.01$m$.}
  \begin{tabular}{c c c c}
    \toprule
    $m$  & 100 & 500 & 1000 \\
    Algorithm & NRE/\#Iter & NRE/\#Iter & NRE/\#Iter \\
    \midrule
 PG
\newline  & 2.36e-02 / 348.3  & 5.25e-02 / 122.7 & 5.24e-02 / 87.3 \\
 mAPG
\newline  & 3.08e-05 / 640.5& 3.92e-05 / 243.9& 4.11e-05 / 102.7 \\
PANOC
\newline  & 1.19e+00 / 971.9 & 1.17e-03 / 122.8  & 1.26e-03 / 5.0 \\
PANOC+
\newline  & 2.36e-04 / 296.5    & 3.59e-04 / 123.5 & 3.89e-04 / 56.6\\
PDOM
\newline  & 7.13e-04 / 1621.1  & 1.08e-03 / 55.8 & 1.17e-03 / 232.8\\
\textbf{sPDOME}
\newline  & \textbf{3.12e-04 / 426.1}& \textbf{1.90e-04 / 138.6} & \textbf{1.81e-04 / 47.5}\\
\textbf{PDOME}
\newline  & \textbf{1.24e-04 / 1377.1}& \textbf{1.34e-04 / 313.7} & \textbf{1.36e-04 / 97.6}\\

    \bottomrule
  \end{tabular}
  
  \label{tab:III} 
\end{table}
The first two subfigures $(a)$ and $(b)$ of Figure \ref{NEfig3}  illustrate the convergence behavior of a single realization with m=500 from Table \ref{tab:III}. Compared with the benchmark algorithms, the sPDOME and PDOME algorithms converge to the critical point faster and achieve smaller recovery errors. Subfigure $(c)$ of Figure \ref{NEfig3} shows that the successful recovery rates of the sPDOME and PDOME algorithms are significantly higher than those of the benchmark algorithms.

Table \ref{tab:III} provides an overview of the average performance of the proposed algorithm and benchmark methods across various problem sizes. It is noticeable that sPDOME and PDOME converge more rapidly and exhibit a stronger capability to reach a better optimum than other algorithms. This is clearly reflected in the fact that they require fewer iterations to get close to the critical point and achieve a smaller NER.
\section{Conclusion}\label{sec:5}
In this paper, the PDOME algorithm are proposed for nonconvex and nonsmooth problems with a quadratic term. 
During the iteration process of PDOME algorithm, constructs and minimizes a majorant function in a hybrid direction based on extrapolation. Theoretical analysis confirms that the algorithm can achieve convergence to a critical point, and its global convergence rate is studied based on the KL property. Numerical experiments show that the sPDOME and PDOME algorithms converges faster in nonconvex problems and can better converge to a local optimal solution. Building on the research in this paper, future work will further expand the content related to quadratic functions and Bregman distances at the algorithm level.
\section*{Declarations}
\noindent{\bf Funding:} This research is supported by the National Natural Science Foundation of China (NSFC) grants 92473208, 12401415, the Key Program of National Natural Science of China 12331011, the 111 Project (No. D23017),  the Natural Science Foundation of Hunan Province (No. 2025JJ60009), the Tianchi Talent Program of Xinjiang Uygur Autonomous Region (CZ001328).

\noindent{\bf Data Availability:} Enquiries about data/code availability should be directed to the authors.

\noindent{\bf Competing interests:} The authors have no competing interests to declare that are relevant to the content of this paper.

\bibliographystyle{elsarticle-num}	

\bibliography{references.bib}

\end{document}